\definecolor{RefColor}{rgb}{0,0,.65}
\crefname{appsec}{Appendix}{Appendices}
\definecolor{CommentColor}{rgb}{0,0,.50}
\newcounter{margincounter}
\declaretheoremstyle[
  postheadhook = {\hspace*{\parindent}},
  postfoothook={\vspace{.7em}},
  postheadspace=-.5em,
  mdframed={
    backgroundcolor=gray!10!white, 
    hidealllines=true, 
    innertopmargin=5pt, 
    innerbottommargin=7pt, 
    skipabove=8pt,
    skipbelow=15pt,
    nobreak=true}
]{grayboxed} 
\declaretheorem[style=plain,qed=$\triangleleft$]{auxtheorem}
\declaretheorem[style=plain,sibling=auxtheorem,qed=$\triangleleft$]{theorem}
\declaretheorem[style=plain,sibling=auxtheorem,qed=$\triangleleft$]{lemma}
\declaretheorem[style=plain,sibling=auxtheorem,qed=$\triangleleft$]{corollary}
\declaretheorem[style=plain,sibling=auxtheorem,qed=$\triangleleft$]{proposition}
\declaretheorem[{numbered=no},style=grayboxed,name={${(\alpha,t)}$-urn}]{urnscheme}
\declaretheorem[{numbered=no},style=grayboxed,name={${(\alpha,t)}$-graph}]{graphscheme}
\declaretheorem[style=definition,qed=$\triangleleft$,sibling=theorem]{remark}
\begin{document}

\begin{abstract}
  We study preferential attachment mechanisms in random graphs that are parameterized by (i) a constant bias affecting the degree-biased distribution on the vertex set and (ii) the distribution of times at which new vertices are created by the model. The class of random graphs so defined admits a representation theorem reminiscent of residual allocation, or ``stick-breaking'' schemes. We characterize how the vertex arrival times affect the asymptotic degree distribution, and relate the latter to neutral-to-the-left processes. Our random graphs generate edges ``one end at a time'', which sets up a one-to-one correspondence between random graphs and random partitions of natural numbers; via this map, our representation induces a result on (not necessarily exchangeable) random partitions that generalizes a theorem of Griffiths and Span\'o. A number of examples clarify how the class intersects with several known random graph models.
\end{abstract}

\begin{frontmatter}
  \title{Preferential attachment and vertex arrival times}
    \author{\fnms{Benjamin }\snm{Bloem-Reddy}\ead[label=e1]{benjamin.bloem-reddy@stats.ox.ac.uk}}
    \and
    \author{\fnms{Peter\ }\snm{Orbanz}\corref{}\ead[label=e2]{porbanz@stat.columbia.edu}}
    \affiliation{University of Oxford and Columbia University}
    \address{Department of Statistics\\
      24--29 St. Giles'\\
      Oxford OX1 3LB, UK\\
      \printead{e1}
    }    
    \address{Department of Statistics\\
      1255 Amsterdam Avenue\\
      New York, NY 10027, USA\\
      \printead{e2}
    }    
\end{frontmatter}

\def\kword#1{\textbf{#1}}
\def\xspace{\mathbf{X}}
\def\borel{\mathcal{B}}
\def\mean{\mathbb{E}}
\def\condind{{\perp\!\!\!\perp}}
\def\ie{i.e.\ }
\def\eg{e.g.\ }
\def\equdist{\stackrel{\text{\rm\tiny d}}{=}}
\def\equas{=_{\text{\rm\tiny a.s.}}}
\def\braces#1{{\lbrace #1 \rbrace}}
\def\bigbraces#1{{\bigl\lbrace #1 \bigr\rbrace}}
\def\Bigbraces#1{{\Bigl\lbrace #1 \Bigr\rbrace}}
\def\simiid{\sim_{\mbox{\tiny iid}}}
\def\Law{\mathcal{L}}
\def\iid{i.i.d.\ }
\def\ind#1{\text{\tiny #1}}
\newcommand{\argdot}{{\,\vcenter{\hbox{\tiny$\bullet$}}\,}}
\renewcommand\labelitemi{\raisebox{0.35ex}{\tiny$\bullet$}}
\def\Teven{\mathcal{T}_{\text{\tiny\rm 2}}}
\def\map{\Phi}
\def\DB{\text{\rm DB}}
\def\G{\mathbb{G}}
\def\T{\mathbb{T}}
\def\P{\mathbb{P}}
\def\R{\mathbb{R}}
\def\Barabasi{Barab\'asi}
\def\bbE{\mathbb{E}}
\def\indicator{\mathds{1}}

\def\Bernoulli{\text{\rm Bernoulli}}
\def\PoissonPlus{\text{\rm Poisson}_{+}}
\def\Poisson{\text{\rm Poisson}}
\def\Uniform{\text{\rm Uniform}}
\def\GammaDist{\text{\rm Gamma}}
\def\BetaDist{\text{\rm Beta}}
\def\NBPlus{\text{\rm NB}_{+}}
\def\NB{\text{\rm NB}}
\def\Geom{\text{\rm Geom}}
\def\CRP{\text{\rm CRP}}
\def\EGP{\text{\rm EGP}}
\def\MittagLeffler{\text{\rm ML}}
\def\GGdist{\text{\rm GGa}}

\def\gvar{\mathcal{G}}
\def\bvar{\mathcal{B}}
\def\mlvar{\mathcal{M}}

\def\egc{e.g.}

\def\ct{D}
\def\randperm{\tilde{\sigma}}
\def\randPerm{\tilde{\Sigma}}
\def\ordct{\ct^{\downarrow}}
\def\iatime{\delta}
\def\iaTime{\Delta}
\def\iaInf{\iatime_{1:\infty}}
\def\IaInf{\iaTime_{1:\infty}}
\def\atime{t}
\def\aTime{T}
\def\atimeInf{\atime_{1:\infty}}
\def\aTimeInf{\aTime_{1:\infty}}
\def\aDist{\Lambda}
\def\aDistiid{\tau}
\def\on{\mid}

\def\bbN{\mathbb{N}}
\def\bbP{\mathbb{P}}

\def\sparsity{\varepsilon}
\newcommand{\widesim}[1][1.5]{
  \mathrel{\scalebox{#1}[1]{$\sim$}}
}
\newcommand{\limscale}[2]{\overset{\scriptscriptstyle{#1 \uparrow #2}}{\widesim[1.25]}}

The term \emph{preferential attachment} describes generative mechanisms for random graph models
that select the terminal vertices of a new edge with probability biased by the vertex degrees.
These models come in many shapes and guises
\citep[\egc][]{Barabasi:Albert:1999,Berger:etal:2014,Hofstad:2016,Pekoz:Rollin:Ross:2017}, and are 
often motivated by their ability to generate (and hence explain) power law distributions. 
Degree-biased selection is a form of size bias \citep{Arratia:Goldstein:Kochman:2013:1},
and this interplay between size-biasing and power laws is not confined to random graph models, but
also encountered in random partitions, which are used in population genetics,
machine learning, and other fields \citep[\egc][]{Pitman:2006,deBlasi:etal:2015,Broderick:Jordan:Pitman:2012}. 
In partition models, power laws arise as heavy-tailed distributions of block sizes.
Size-biased sampling as such, however, need not result in a power laws:
The most basic form of 
size-biased sampling from a countable number of categories is a type of P\'olya urn with 
an unbounded number of colors, or, equivalently, the one-parameter Chinese
restaurant process \citep{Pitman:2006}. It does not generate a power law. 
To obtain power laws, plain size-biased sampling can be modified in two ways:
\begin{enumerate}
\renewcommand\labelenumi{(\roman{enumi})}
\item By biasing the size-biased probability of each category downward. 
\item By forcing new categories to arrive at a faster rate than that induced by plain size-biased sampling.
\end{enumerate}
An example is the two-parameter Chinese restaurant process with parameter $(\alpha,\theta)$,
which modifies the Chinese restaurant process with a single parameter $\theta$---a model that corresponds
to plain size-biased sampling---by effectively (i) damping the size bias by a constant offset $\alpha$, and (ii) increasing the rate at which new categories arrive. 
An example of (ii) is the \Barabasi--Albert random graph model, in which vertices
arrive at fixed, constant time intervals; if these times were instead determined at random by size-biased sampling, intervals would grow over time.

The premise of this work is to study preferential attachment mechanisms in random graph models
by explicitly controlling these two effects:
\begin{enumerate}
  \renewcommand\labelenumi{(\roman{enumi})}
\item The attachment probability, proportional to the degree $\deg_k$ of each vertex $k$, is biased
  by a constant offset as ${\deg_k-\,\alpha}$.
\item Vertex arrival times are taken into account, by explicitly conditioning the generation process on a (random or non-random) sequence of given times.
\end{enumerate}
The result is a class of random graphs parametrized by the offset $\alpha$ and a sequence $t$ of vertex
arrival times. Each such $(\alpha,t)$-graph can be generalized by randomizing the arrival times,
\ie to an $(\alpha,T)$-graph for a random sequence $T$. 
Preferential attachment models that constantly bias the attachment probability 
have been thoroughly studied \citep{Mori:2005,Hofstad:2016}. We consider the range ${\alpha\in(-\infty,1)}$,
and the case ${\alpha\in[0,1)}$ turns out to be of particular interest.
The effects (i) and (ii) are not independent, and in models with a suitable exchangeability 
property, the effect of $\alpha$ can equivalently be induced by controlling the law of the arrival times.
In this sense, (ii) can provide more control over the model than (i).

\cref{sec:representation} characterizes $(\alpha,T)$-graphs by a representation in terms of independent beta random variables, reminiscent of stick-breaking
constructions of random partitions.

\cref{sec:graphs:urns} considers implications for random partitions and urns. 
${(\alpha,T)}$-graphs generate edges ``one end at a time'', updating the vertex degrees after each step. Although such a scheme differs from the usual preferential attachment model, it is similar to so-called ``sequential'' versions considered by \cite{Berger:etal:2014,Pekoz:Rollin:Ross:2017}. This sets up a one-to-one correspondence between multigraphs and partitions of natural numbers:
There is a bijection $\Phi$ such that
\begin{equation*}
  \Phi(\text{partition})=\text{graph}\;,
\end{equation*}
which translates our results on graphs into statements about partitions.
If $G$ is an $(\alpha,T)$-graph, the random partition $\Phi^{-1}(G)$ may or may not be 
exchangeable. The subclass of such partitions that are exchangeable are precisely 
the exchangeable Gibbs partitions \citep{Gnedin:Pitman:2006,Pitman:2006}. Arrival times in such partitions,
known as \emph{record indices}, have been studied by \citet{Griffiths:Spano:2007}. 
Broadly speaking, our results recover those of
Griffiths and Span\`o if $\Phi^{-1}(G)$ is exchangeable, but 
show there 
is a larger class of random partitions---either of Gibbs type, or not exchangeable---for
which similar results hold. Non-exchangeable examples include partitions defined
by the Yule--Simon process \citep{Yule:1925,Simon:1955}. Additionally, 
our representation result for graphs yields an analogous representation for this class
of partitions; it also relates the 
work of \citet{Griffiths:Spano:2007} to that of \citet*{Berger:etal:2014}
on Benjamini--Schramm limits of certain random preferential attachment graphs.

\cref{sec:degree:asymptotics} studies degree asymptotics of $(\alpha,T)$-graphs.
Properly scaled degree sequences of such graphs converge. The limiting degrees 
are neutral-to-the-left sequences of random variables that satisfy a number of distributional identities.
We characterize cases in which power laws emerge, and 
relate the behavior of the degree distribution to sparsity. 
The range of power laws achievable is constrained by whether or not the average degree is bounded.

\cref{sec:examples} discusses examples, and shows how the class of $(\alpha,T)$-graphs overlaps 
with several known models, such as
the \Barabasi--Albert model \citep{Barabasi:Albert:1999}, edge exchangeable graphs \citep{Crane:Dempsey:2016,Cai:etal:2016,Janson:2017aa}, 
and the preferential attachment model of \citet*{Aiello:Chung:Lu:2001,Aiello:Chung:Lu:2002}.
We obtain new results for some of these submodels. For preferential attachment graphs, for example, 
limiting degree sequences are known to satisfy various
distributional identities \cite{Pekoz:Rollin:Ross:2017,James:2015aa,Janson:2006}. These results assume
fixed intervals between vertex arrival times. 
We show similar results hold if arrival times are random. 
Perhaps most closely related is the work of \citet*{Pekoz:Rollin:Ross:2017aa}
on random immigration times in a two-color P\'{o}lya urn, which corresponds to a certain
$(\alpha,T)$-model with \iid interarrival times. 
We use this correspondence to answer a question posed in \citep{Pekoz:Rollin:Ross:2017aa} about
interarrival times with geometric distribution.

\section{Preferential attachment and arrival times}
\label{sec:representation}

Consider an undirected multigraph $g$, possibly with self-loops, with a countably infinite number of edges. The graph models considered
in the following insert edges into the graph one at a time. It is hence convenient to represent $g$ as
a sequence of edges
\begin{equation}
  \label{eq:graph:1}
  g=\bigl( (l_1,l_2),(l_3,l_4),\ldots \bigr)
  \qquad\text{ where }l_j\in\mathbb{N}\text{ for all }j\in\mathbb{N}\;.
\end{equation}
Each pair ${(l_{2n-1},l_{2n})}$ represents an undirected edge connecting the vertices $l_{2n-1}$ and $l_{2n}$. 
The vertex set of $g$ is ${\mathbf{V}(g):=\braces{l^*_1,l^*_2,\ldots}}$, the set of all distinct values
occurring in the sequence. We assume vertices are enumerated in order of appearance, to wit
\begin{equation}
  \label{eq:graph:2}
  l_1=1 \qquad\text{ and }\qquad l_{j+1}\leq \max\braces{l_1,\ldots,l_j}+1\quad\text{ for all }j\in\mathbb{N}\;.
\end{equation}
Consequently, $\mathbf{V}(g)$ is either a consecutive finite set $\braces{1,\ldots,m}$, or the entire set
$\mathbb{N}$. Let $\G$ be the set of multigraphs so defined, equipped with the
topology inherited from the product space $\mathbb{N}^{\infty}$, which makes it a standard
Borel space. For our purposes, a \kword{random graph} is a random element 
\begin{equation*}
  G=\bigl((L_1,L_2),(L_3,L_4),\ldots)
\end{equation*}
of $\G$, for $\mathbb{N}$-valued random variables $L_n$. Note that the same setup can be used to model directed multigraphs.

For a graph $g$, let ${g_n:=(l_j)_{j\leq 2n}}$ denote the subgraph given by the first
$n$ edges, and $\deg_k(n)$ the degree of vertex $k$ in $g_n$.
The \kword{arrival time} of vertex $k$ is
\begin{equation*}
  t_k:=\min\braces{j\in\mathbb{N}\,\vert\,l_j=k}\;,
\end{equation*}
with ${t_k=\infty}$ if $g$ has fewer than $k$ vertices. The set of possible arrival time
sequences is
${\T:=\braces{(1=t_1<t_2<\ldots\leq\infty)}}$.
Arrival times in ${\Teven:=\braces{t\in\T\,\vert\,t_k\text{ even for }k>1}}$
is a sufficient, though not necessary, condition for $g$ to be a connected graph; it is necessary and sufficient for each $g_n$ to be connected. 
If ${T=(T_1,T_2,\ldots)}$ is a random sequence of arrival times, the
interarrival times
\begin{equation*}
  \Delta_k:=T_k-T_{k-1} \qquad\text{ where }T_0:=0
\end{equation*}
are random variables with values in ${\mathbb{N}\cup\braces{\infty}}$.

\subsection{Degree-biased random graphs}
The term preferential attachment describes a degree bias: Recall that the 
\kword{degree-biased} distribution on the vertices of a graph 
$g_n$ with $n$ edges is ${P(k\,;g_n):=\deg_k(n)/2n}$. 
We embed $P$ into a one-parameter family of
laws
\begin{equation*}
  P_{\alpha}(k\,;g_n)
  :=
  \frac{\deg_k(n)-\alpha}{2n-\alpha|\mathbf{V}(g_n)|}
  \qquad
  \text{ for }\alpha\in(-\infty,1)\;,
\end{equation*}
the \kword{$\alpha$-degree biased} distributions.
Both $P$ and $P_{\alpha}$ are defined on a graph $g_n$, in which each edge
is either completely present or completely absent. To permit edges to be 
generated ``one end at a time'', we observe $P_{\alpha}$ can be rewritten as
\begin{equation} \label{eq:p:alpha}
  P_\alpha(k\,;l_1,\ldots,l_j)=
  \frac{|\braces{i\leq j|l_j=k}|-\alpha}{j-\alpha\max\braces{l_1,\ldots,l_j}}
  \qquad\text{ for }k\leq \max\braces{l_1,\ldots,l_j}\;,
\end{equation}
which is well-defined even if $j$ is odd. 
\begin{graphscheme} Given are ${\alpha\in(-\infty,1)}$ and ${t\in\T}$. Generate
  ${L_1,L_2,\ldots}$ as
  \vspace{-.4em}
  \begin{equation*}
    L_n:=k \quad\text{ if } n=T_k
    \qquad\text{ and }\qquad
    L_n\sim P_{\alpha}(\argdot\,;L_1,\ldots,L_{n-1})\quad\text{ otherwise}\;.
  \end{equation*}
\end{graphscheme}
Then ${G:=((L_1,L_2),(L_3,L_4),\ldots)}$ is a random graph, whose law we denote ${\DB(\alpha,t)}$. 
The sequence $t$ may additionally be randomized: We call $G$
an $(\alpha,T)$-graph if its law is $\DB(\alpha,T)$, for some
random element $T$ of $\T$. Examples of multigraphs generated using different distributions for $T$ are shown in \cref{fig:examples}. 

As a consequence of the family of laws \eqref{eq:p:alpha}, the finite-dimensional distributions of $(\alpha,t)$-graphs have a simple product form.
\begin{proposition} \label{prop:exact:probability}
  Let $G_{n}$ be an $(\alpha,t)$-graph with $n/2$ edges and $k$ vertices. Then
  \begin{align} \label{eq:exact:probability}
    \bbP_{\alpha,t}[G_{n/2} & = ((L_1,L_2),\dotsc,(L_{n-1},L_{n}))] \nonumber \\
    & = \frac{1}{\Gamma(n-k\alpha)} \prod_{j=1}^{k} \frac{\Gamma(T_j - j\alpha) \Gamma(\#\{L_i=k \mid i \leq n\} - \alpha)}{\Gamma(T_j - 1 - (j-1)\alpha + \delta_1(j)) \Gamma(1 - \alpha)} \;.
  \end{align}
\end{proposition}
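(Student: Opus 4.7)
The plan is to expand the joint probability via the chain rule along the scheme's time steps and then collect Gamma ratios. Because $L_{T_j}=j$ deterministically at each arrival, those $k$ steps contribute a factor of $1$, and I am left with
\begin{equation*}
\bbP_{\alpha,t}[G_{n/2}=((L_1,L_2),\ldots,(L_{n-1},L_n))] = \prod_{m\in\{1,\ldots,n\}\setminus\{T_1,\ldots,T_k\}} P_\alpha(L_m;L_1,\ldots,L_{m-1})\;.
\end{equation*}
Substituting \eqref{eq:p:alpha} with $j=m-1$, each factor splits into a numerator $|\{i<m:L_i=L_m\}|-\alpha$ and a denominator $(m-1)-\alpha J_{m-1}$, where $J_{m-1}=\max\{L_1,\ldots,L_{m-1}\}$ counts the vertices that have already arrived by step $m-1$.

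I would then evaluate the numerator product by grouping factors according to which vertex is hit: among non-arrival steps $m$ with $L_m=j$, the running count $|\{i<m:L_i=j\}|$ takes each value in $\{1,2,\ldots,N_j-1\}$ exactly once (the $0\to 1$ transition happens deterministically at $m=T_j$ and contributes no factor), where $N_j:=\#\{i\leq n:L_i=j\}$. This yields
\begin{equation*}
\prod_{j=1}^{k}\prod_{c=1}^{N_j-1}(c-\alpha) = \prod_{j=1}^{k}\frac{\Gamma(N_j-\alpha)}{\Gamma(1-\alpha)}\;.
\end{equation*}

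The denominator I would handle by grouping factors by the epoch between successive arrivals, on which $J_{m-1}$ stays constant. For $j<k$ the non-arrival steps $m\in\{T_j+1,\ldots,T_{j+1}-1\}$ satisfy $J_{m-1}=j$, so their denominators $(m-1)-\alpha j$ multiply to $\Gamma(T_{j+1}-1-\alpha j)/\Gamma(T_j-\alpha j)$; the trailing epoch $\{T_k+1,\ldots,n\}$ gives $\Gamma(n-\alpha k)/\Gamma(T_k-\alpha k)$. Empty products arising from consecutive arrivals $T_{j+1}=T_j+1$ equal $1$, consistent with the Gamma ratio.

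To finish I would divide the numerator product by the denominator product and reindex. Shifting $j\mapsto j+1$ in the epoch telescope turns the factors $\Gamma(T_{j+1}-1-\alpha j)^{-1}$ for $j=1,\ldots,k-1$ into $\Gamma(T_j-1-(j-1)\alpha)^{-1}$ for $j=2,\ldots,k$, and absorbing the leftover $\Gamma(T_k-\alpha k)$ into the product $\prod_{j=1}^k \Gamma(T_j-j\alpha)$ yields exactly \eqref{eq:exact:probability}. The Kronecker $\delta_1(j)$ in the stated formula is purely a notational device that lets the outer product start at $j=1$: substituting $T_1=1$ gives $\Gamma(T_1-1-0+1)=\Gamma(1)=1$, so the $j=1$ factor is vacuous, matching the fact that the first epoch has no predecessor to telescope with. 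No step is conceptually difficult; the only obstacle is careful index tracking, precisely the off-by-one trap that the $\delta_1$ convention is designed to absorb.
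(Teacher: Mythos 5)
Your proof is correct and follows the same route the paper takes: the paper simply asserts that the product form follows from the sequential laws $P_\alpha(\cdot\,;L_1,\dotsc,L_{m-1})$ (inside the proof of Theorem 2), and your chain-rule expansion with the per-vertex numerator grouping and the per-epoch telescoping of denominators is precisely the omitted bookkeeping, including the correct reading of the $\delta_1(j)$ convention for the $j=1$ factor.
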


\subsection{Representation result}
Fix ${\alpha\in(-\infty,1)}$ and ${t\in\T}$.
Let ${\Psi_1,\Psi_2,\ldots}$ be independent random variables with $\Psi_1 = 1$,
\begin{equation}
  \label{eq:sb:1}
  \Psi_j\sim\text{Beta}\bigl(1-\alpha,t_j-1-(j-1)\alpha\bigr) \quad \text{for} \quad j\geq 2 \;,
\end{equation}
and define
\begin{equation}
  \label{eq:sb:2}
  W_{j,k}:=
  \sum_{i = 1}^{j}
  \Psi_i\prod_{\ell = i+1}^k (1-\Psi_{\ell})
  \quad\text{ and  }\quad
  I_{j,k}:=[W_{j-1,k},W_{j,k}) \text{ with }W_{0,k}=0\;.
\end{equation}
Note that $W_{j,k}=\prod_{\ell=j+1}^k (1-\Psi_{\ell})$ and $W_{k,k}=1$. Hence, ${\cup_{j=1}^k I_{j,k}=[0,1)}$. Generate a random sequence ${U_1,U_2,\ldots\simiid\Uniform[0,1)}$. For each $n$, let $t_{k(n)}$ be the preceding arrival time, 
\ie the largest $t_k$ with ${t_k\leq n}$, and set
\begin{equation}
  \label{eq:sb:3}
  L_n :=\begin{cases}
  k(n) & \text{ if } n=t_{k(n)}\\
  j \text{ such that } U_n\in I_{j,k(n)} & \text{ otherwise}
  \end{cases}\;.
\end{equation}
Then ${H(\alpha,t):=((L_1,L_2),\ldots)}$ is a random element of $\G$.
\begin{theorem}
  \label{theorem:sb}
  A random graph $G$ is an $(\alpha,T)$-graph 
  if and only if
  ${G\equdist H(\alpha,T)}$ for some ${\alpha\in(-\infty,1)}$
  and a random element $T$ of $\T$.
\end{theorem}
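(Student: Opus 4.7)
My plan is to reduce to deterministic arrival times and then recognise the stick-breaking construction as an equivalent description of the sequence of P\'olya-urn proportions arising from $\DB(\alpha,t)$. By conditioning on $T=t$ and using the tower property, the assertion reduces to showing that for each fixed $t\in\T$ the random graph $H(\alpha,t)$ has law $\DB(\alpha,t)$; both constructions force $L_n=k(n)$ at arrival times $n=t_{k(n)}$, so only the conditional distributions of $L_n$ at non-arrival times, prescribed by \eqref{eq:p:alpha} to be $P_\alpha(\argdot\,;L_1,\dots,L_{n-1})$, need to be matched.

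The main step identifies $\DB(\alpha,t)$, restricted to a phase between consecutive arrivals $t_k$ and $t_{k+1}$, as an $\alpha$-biased generalized P\'olya urn on $k$ colors with initial weights $D_j(t_k)-\alpha$, where $D_j(m):=\#\{i\le m:L_i=j\}$; the total mass at step $n$ is $n-k\alpha$, matching the denominator in \eqref{eq:p:alpha}. By the classical Dirichlet--multinomial representation, within this phase the draws are conditionally i.i.d.\ categorical given a vector of proportions $(p_1^{(k)},\dots,p_k^{(k)})$ distributed as $\text{Dirichlet}(D_1(t_k)-\alpha,\dots,D_k(t_k)-\alpha)$. The key is to track how these proportions evolve across the forced arrival transition at $t_{k+1}$: since $L_{t_{k+1}}=k+1$ is not drawn from the urn, it carries no posterior information about the old proportions, but introduces a new color with prior mass $1-\alpha$ alongside old colors carrying cumulative mass $t_{k+1}-1-k\alpha$. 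Dirichlet aggregation (the decomposition of a $(k+1)$-dimensional Dirichlet as an independent $\BetaDist(1-\alpha,\,t_{k+1}-1-k\alpha)$ share times a $k$-dimensional Dirichlet on the remaining, rescaled shares) then yields a new share $\Psi_{k+1}$ independent of the previous proportions with distribution matching \eqref{eq:sb:1}, and rescales the old shares by $1-\Psi_{k+1}$, which is precisely the recursion implicit in \eqref{eq:sb:2}. Inductively in $k$, the stage-$k$ proportions coincide in law with $(W_{j,k}-W_{j-1,k})_{j=1}^k$, and \eqref{eq:sb:3} therefore reproduces the conditional dynamics of $\DB(\alpha,t)$.

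The hard part will be making the joint independence of $\Psi_2,\Psi_3,\ldots$ with their stated Beta parameters rigorous: one must chain the Dirichlet--Beta decomposition across the successive phases and verify that the forced arrival draws do not transmit information between phases, so that $\Psi_{k+1}$ is genuinely independent of the history up to time $t_{k+1}$. As a direct cross-check bypassing the conjugacy argument, one can compute $\bbP[H(\alpha,t)=((l_1,l_2),\dots,(l_{n-1},l_n))]$ by taking expectations of the step-weight product $\prod_{n}(W_{L_n,k(n)}-W_{L_n-1,k(n)})$ against the independent Beta densities of $\Psi_2,\dots,\Psi_k$; collecting exponents, $\Psi_j$ appears to the power $D_j(n)-1$ and $1-\Psi_j$ to the power $\sum_{i<j}D_i(n)-(t_j-1)$, using that all $t_j-1$ draws preceding the arrival of vertex $j$ lie in colors $1,\dots,j-1$. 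The resulting Beta integrals telescope into the product of Gamma functions in \eqref{eq:exact:probability}, which gives the equality of finite-dimensional laws and hence the theorem.
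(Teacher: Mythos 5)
Your ``direct cross-check'' is, in fact, the paper's own proof: the paper writes down the joint density of $(H_{n/2}(\alpha,t),(\Psi_j)_{2\le j\le k_n})$, in which $\Psi_j$ carries exponent $\deg_j(n)-\alpha-1$ and $1-\Psi_j$ carries exponent $\sum_{i<j}\deg_i(n)-(j-1)\alpha-1$ (prior plus likelihood, exactly the bookkeeping you describe), integrates out the $\Psi_j$, and observes that the resulting Gamma-function product telescopes to the finite-dimensional law of $\DB(\alpha,t)$ from \cref{prop:exact:probability}; equality for all $n$ and all $t\in\T$, together with conditioning on $T=t$, gives the theorem. Your exponent counts are correct (all $t_j-1$ draws before the arrival of vertex $j$ land in colors $1,\dots,j-1$, and the forced arrival draws contribute no likelihood factor), so this part of your argument is complete and essentially identical to the paper's. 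Your primary route --- phase-wise Dirichlet--multinomial representation plus the aggregation/decomposition of a Dirichlet into an independent $\BetaDist(1-\alpha,\,t_{k+1}-1-k\alpha)$ share for the new color times a rescaled Dirichlet on the old colors --- is a genuinely different argument, and is precisely the ``recursive sequence of P\'olya urns'' interpretation the paper offers only informally after the theorem statement. It buys conceptual insight into why the $\Psi_j$ are independent with those particular Beta parameters (the new color's share depends only on the deterministic total mass $t_{k+1}-1-k\alpha$, not on how that mass is split among old colors), but as you note the inductive chaining of independence across phases still needs to be written out carefully; the direct computation sidesteps that entirely, which is presumably why the paper uses it.
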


Products of the form \eqref{eq:sb:2}, for the same sequence of beta variables $\Psi_j$, previously have
appeared in two separate contexts:
\citet{Griffiths:Spano:2007} identify $\Psi_j W_{j,\infty}$ as the limiting relative block sizes in 
exchangeable Gibbs partitions, conditioned on the block arrival times. This corresponds to the special
case of \cref{theorem:sb} where the random variables ${(L_1,L_2,\ldots)}$ define an exchangeable Gibbs
partition (see \cref{sec:graphs:urns,sec:exchangeable:partitions}). In work of \citet*{Berger:etal:2014}, a version of \eqref{eq:sb:1}--\eqref{eq:sb:3} arises
as the representation of the Benjamini--Schramm limit of certain preferential attachment graphs
(in which case all interarrival times are fixed to a single constant).

That two problems so distinct lead to the same 
(and arguably not entirely obvious) distribution 
raises the question whether \eqref{eq:sb:1} can be understood in a more
conceptual way. One such way is by regarding the graph as a recursive sequence of P\'{o}lya urns: Conditionally on an edge attaching to one of the first $k$ vertices, it attaches to vertex $k$ with probability $\Psi_k$ and one of the first $k-1$ with probability $1-\Psi_k$, and so on for $k-1,\dotsc,2$. A related interpretation is in terms of the special properties of beta and gamma random variables. 
Let $\gvar_a$ and $\bvar_{a,b}$ generically denote a gamma random variable with parameters $(a,1)$ and a beta
variable with parameters $(a,b)$. 
Beta and gamma random variables satisfy a set of relationships sometimes referred to collectively 
as the \emph{beta-gamma algebra} \citep[e.g.][]{Revuz:Yor:1999}. These relationships revolve around the fact that, if $\gvar_a$ and
$\gvar_b$ are independent, then 
\begin{equation}
  \label{eq:beta:gamma:algebra}
  \bigl(\gvar_{a+b},\bvar_{a,b}\bigr)
  \equdist 
  \Bigl(\gvar_a+\gvar_b,\frac{\gvar_a}{\gvar_a+\gvar_b}\Bigr)\;,
\end{equation}
where the pair on the left is independent, and so is the pair on the right.
In the context of $(\alpha,T)$-graphs, conditionally on the sequence ${\Delta_1,\Delta_2,\ldots}$ of interarrival times, 
generate two sequences of gamma variables
\begin{equation*}
  \gvar^{(1)},\gvar^{(2)},\ldots\simiid\GammaDist(1-\alpha,1)
  \qquad\text{ and }\qquad
  \gvar_{\Delta_2-1},\gvar_{\Delta_3-1},\ldots\;,
\end{equation*}
all mutually independent given ${(\Delta_k)}$. 
The variables $\Psi_k$ can then be represented as
\begin{equation}
  \label{eq:recursion}
  \Psi_j\quad\equdist\quad\frac{\gvar^{(j)}}{\sum_{i\leq j}\gvar^{(i)}+\sum_{i<j}\gvar_{\Delta_{i+1}-1}}\quad=:\quad\Psi_j'\;.
\end{equation}
Such recursive fractions are not generally independent, but as a consequence of \eqref{eq:beta:gamma:algebra},
equality in law holds even jointly, ${(\Psi_1,\Psi_2,\ldots)\equdist(\Psi_1',\Psi_2',\ldots)}$,
recovering the variables in \cref{theorem:sb}.
Identity \eqref{eq:beta:gamma:algebra} further implies
${\bvar_{a,b+c}\equdist \bvar_{a,b}\bvar_{a+b,c}}$, again with independence on the right.
Abbreviate 
\begin{equation*}
  \tau_j:=t_j-1+\alpha(j-1) \qquad\text{ such that \eqref{eq:sb:1} becomes }\qquad \Psi_j\equdist \bvar_{1-\alpha,\tau_j}\;.
\end{equation*}
The recursion \eqref{eq:recursion} then implies
\begin{equation*}
  \bvar_{1-\alpha,\tau_j}\equdist \bvar_{1-\alpha+\tau_{j-1},\Delta_j-\alpha}\bvar_{1-\alpha,\tau_{j-1}}
  \quad\text{ hence }\quad
  \Psi_j|\Psi_{j-1}\equdist\Psi_{j-1}\bvar_{1-\alpha+\tau_{j-1},\Delta_j-\alpha}\;,
\end{equation*}
with independence on the right of both identities.
Informally, one may think of ${\gvar^{(k)}}$ as an (unnormalized) propensity of vertex $k$ to attract edges, of those edges attaching to one of the first $k$ vertices. The requisite
normalization in \eqref{eq:recursion} 
depends on propensities of previously created vertices
(represented by the variables ${\gvar^{(1)},\ldots,\gvar^{(k-1)}}$), and contributions of the ``head start'' given to previously created vertices 
(represented by the variables $\gvar_{\Delta_j - 1}$).

\section{Graphs and urns}
\label{sec:graphs:urns}

Any graph in $\G$ defines a partition of $\mathbb{N}$, and vice versa.
This fact is used below to classify $\alpha$-degree biased graphs according to the
properties of the random partition they define. More precisely, 
a \kword{partition} of $\mathbb{N}$ is a sequence 
${\pi=(b_1,b_2,\ldots)\subset\mathbb{N}}$ of subsets, called \kword{blocks},
such that each ${n\in\mathbb{N}}$ belongs to one and only one block.
The set of all partitions is denoted ${\mathcal{P}(\mathbb{N})}$, and
inherits the topology of ${\mathbb{N}^{\infty}}$.
A partition can equivalently
be represented as a sequence ${\pi=(l_1,l_2,\ldots)}$ of block labels, where
${l_j=k}$ means ${j\in b_k}$. There is hence a bijective map
\begin{equation*}
  \map:\mathcal{P}(\mathbb{N})\rightarrow\G
  \qquad\text{ given by }\qquad
  (l_1,l_2,\ldots)\mapsto\bigl((l_1,l_2),(l_3,l_4),\ldots\bigr)\;,
\end{equation*}
which is a homeomorphism of $\mathcal{P}(\mathbb{N})$ and $\G$.
It identifies blocks of $\pi$ with vertices of ${g=\map(\pi)}$. 
In population genetics, the smallest element of the $k$th block of a partition $\pi$ is known
as a \emph{record index} \cite{Griffiths:Spano:2007}. 
Thus, the $k$th arrival time in $g$ is precisely the $k$th
record index of $\pi$.

The generative process of a random partition $\Pi$ can be thought of as an urn: Start with an empty urn, and add consecutively
numbered balls one at a time, each colored with a randomly chosen color. Colors may reoccur, and are
enumerated in order of first appearance. Let $B_k(n)$ be the set of all balls sharing
the $k$th color after $n$ balls have been added. For ${n\rightarrow\infty}$, one obtains
a random partition ${\Pi=(B_1,B_2,\ldots)}$ of $\mathbb{N}$, with blocks ${B_k:=\cup_n B_k(n)}$.
In analogy to the $(\alpha,t)$-graphs above, we define:
\begin{urnscheme} Given are ${\alpha\in(-\infty,1)}$ and ${t\in\T}$.
  \vspace{-.4em}
\begin{itemize}
\item If ${n=t_k}$ for some $k$, add a single ball of a new, distinct color to the urn.
\item Otherwise, add a ball of a color already present in the urn, where
  the $j$th color is chosen with probability proportional to
  ${|B_j(n)|-\alpha}$. 
\end{itemize}
\end{urnscheme}
A familiar special case of such an urn is the P\'olya urn with $m$ colors, obtained for
${\alpha=0}$ and ${t=(1,2,\ldots,m,\infty,\infty,\ldots)}$. Another is the two-parameter
Chinese restaurant process \citep{Pitman:2006}, also known as the Blackwell--MacQueen urn \citep{Blackwell:MacQueen:1973,Pitman:1996aa}: If $t$ is randomized by
generating ${(T_1=1,T_2,T_3,\ldots)}$ according to
\begin{equation} \label{eq:crp:arrivals}
  \mathbb{P}[T_{k+1}=T_{k}+t \on T_k]
  =
  (\theta + \alpha k) 
  \frac{ \Gamma(\theta + T_k) \Gamma(T_k + t - 1 - \alpha k) }{ \Gamma(\theta + T_k + t) \Gamma(T_k - \alpha k) }\;,
\end{equation}
for some ${\theta>-\alpha}$, 
the partition has law ${\text{CRP}(\alpha,\theta)}$.

In general, $(\alpha,t)$-urns define a class of random partitions $\Pi$ that are \emph{coherent}, in the sense that 
\begin{align*}
  \bbP( \Pi_{n-1} = \{B_1,\dotsc,B_k\} ) = \sum_{j=1}^{k+1} \bbP( \Pi_{n} = \mathcal{A}_{n\to j}(\Pi_{n-1}) ) \;,
\end{align*}
where $\mathcal{A}_{n\to j}(\Pi_{n-1})$ denotes the operation of appending $n$ to block $B_j$ in $\Pi_{n-1}$. Partitions for which these probabilities depend only on the sizes of the blocks, and which are therefore invariant under permutations of the elements, are exchangeable random partitions \cite{Pitman:2006}. That is, there is an \emph{exchangeable partition probability function} (EPPF) $p(\cdot)$, symmetric in its arguments, such that
\begin{align*}
  p(|B_1|,\dotsc,|B_k|) = \bbP(\Pi_n = \{ B_1,\dotsc, B_k \}) \;,
\end{align*}
which is invariant under the natural action of the symmetric group. A special subclass are the exchangeable partitions of \emph{Gibbs type}, for which the EPPF has the unique product form \cite{Gnedin:Pitman:2006}
\begin{align} \label{eq:egp:eppf}
  p( |B_1|,\dotsc,|B_k| ) = 
    V_{n,k} \prod_{j=1}^k \frac{\Gamma(|B_j| - \alpha)}{\Gamma(1 - \alpha)} \;,
\end{align}
for a suitable sequence of coefficients $V_{n,k}$ satisfying the recursion
\begin{align} \label{eq:egp:recursion}
  V_{n,k} = (n - \alpha k) V_{n+1,k} + V_{n+1,k+1} \;.
\end{align}
The distribution of the arrival times can be deduced from \eqref{eq:egp:eppf} and \eqref{eq:egp:recursion} as
\begin{align} \label{eq:egp:arrivals}
  \bbP[T_{k+1} = T_k + t \on T_k] = \frac{\Gamma(T_{k} + t - 1 - \alpha k)}{\Gamma(T_k - \alpha k)} \frac{V_{T_{k} + t,k+1}}{V_{T_k,k}} \;,
\end{align}
of which \eqref{eq:crp:arrivals} for the CRP is a special case. Denote the law of $T_1,\dotsc,T_k$ generated by \eqref{eq:egp:arrivals} as $P_{\alpha,V}(T_1,\dots,T_k)$.

Alternatively, consider the $(\alpha,T)$-urn counterpart of the EPPF, given in \cref{prop:exact:probability},
\begin{align} \label{eq:cppf}
  p_{\alpha,T}( |B_1|,\dotsc,|B_k|; & \ T_1,\dots,T_k ) = \bbP[\Pi_n = \{B_1,\dotsc,B_k\} \mid T_1,\dots,T_k ]  \\
  & = \frac{1}{\Gamma(n - k\alpha)} \prod_{j=1}^{k} \frac{\Gamma(T_j - j\alpha) \Gamma(|B_j| - \alpha)}{\Gamma(T_j - 1 - (j-1)\alpha + \delta_1(j)) \Gamma(1 - \alpha)} \nonumber \;.
\end{align}
Define
\begin{align} \label{eq:cond:gibbs:v}
  V_{n,k}^{\alpha,T} :=  \frac{1}{\Gamma(n - k\alpha)} \prod_{j=1}^{k} \frac{\Gamma(T_j - j\alpha)}{\Gamma(T_j - 1 - (j-1)\alpha + \delta_1(j))} \;,
\end{align}
in which case \eqref{eq:cppf} takes on the Gibbs-like form
\begin{align*}
  p_{\alpha,T}( |B_1|,\dotsc,|B_k|; & \ T_1,\dots,T_k ) = V_{n,k}^{\alpha,T} \prod_{j=1}^k \frac{\Gamma(|B_j| - \alpha)}{\Gamma(1 - \alpha)}  \;.
\end{align*}
This general formula holds for all $(\alpha,T)$-urns. In the case that $\Pi$ is exchangeable, these relationships imply a further characterization of exchangeable Gibbs partitions: \eqref{eq:egp:eppf} is obtained by marginalizing the arrival times from \eqref{eq:cppf} according to $P_{\alpha,V}$.
\begin{proposition}
  Let $\Pi$ be a random partition generated by an $(\alpha,T)$-urn, with finite-dimensional conditional distributions given by \eqref{eq:cppf}. Then $\Pi$ is exchangeable if and only if there exists some sequence of coefficients $V = (V_{n,k})$ satisfying
  \begin{align*}
    V_{n,k} & = \sum_{\substack{T_1,\dotsc,T_k \\ T_k \leq n}} \frac{P_{\alpha,V}(T_1,\dots,T_k) }{\Gamma(n - k\alpha)} \prod_{j=1}^{k} \frac{\Gamma(T_j - j\alpha)}{\Gamma(T_j - 1 - (j-1)\alpha + \delta_1(j))} 
     = \bbE[V_{n,k}^{\alpha,T}] \;,
  \end{align*}
  for all $k\leq n$, in which case \eqref{eq:egp:eppf} holds and $\Pi$ is an exchangeable Gibbs partition.
\end{proposition}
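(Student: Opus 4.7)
My plan is to exploit the product structure of the conditional probability~\eqref{eq:cppf}, which factors into an arrival-time-dependent coefficient $V_{n,k}^{\alpha,T}$ and a block-size factor $\prod_{j=1}^k \Gamma(|B_j|-\alpha)/\Gamma(1-\alpha)$. Because the arrival times of a realized partition are pinned down by $T_j = \min B_j$, marginalizing the joint law of $(\Pi,T)$ over $T$ reduces to evaluating~\eqref{eq:cppf} at the unique compatible sequence $T^\star := (\min B_1,\dots,\min B_k)$. The marginal probability of a specific partition of $[n]$ is therefore
\begin{equation*}
  \bbP[\Pi_n = \{B_1,\dots,B_k\}] = \bbP[T = T^\star] \cdot V_{n,k}^{\alpha,T^\star} \cdot \prod_{j=1}^k \frac{\Gamma(|B_j|-\alpha)}{\Gamma(1-\alpha)} \;.
\end{equation*}

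For the ``only if'' direction I would assume $\Pi$ is exchangeable, so the right-hand side depends only on the block sizes $\{|B_j|\}$. Since the last product already has this property, the coefficient $\bbP[T = T^\star] \cdot V_{n,k}^{\alpha,T^\star}$ must be a function of $(n,k)$ alone; denote this common value $V_{n,k}$. The marginal EPPF then has the Gibbs form~\eqref{eq:egp:eppf}, and by the Gnedin--Pitman characterization \cite{Gnedin:Pitman:2006} the sequence $V = (V_{n,k})$ satisfies the recursion~\eqref{eq:egp:recursion}. The induced law on arrival times is pinned down as $\bbP[T = T^\star] = V_{n,k}/V_{n,k}^{\alpha,T^\star}$, which agrees with the chain-rule description~\eqref{eq:egp:arrivals} and hence coincides with $P_{\alpha,V}$. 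Averaging the identity $\bbP[T = T^\star] \cdot V_{n,k}^{\alpha,T^\star} = V_{n,k}$ against $P_{\alpha,V}$ then yields the desired expectation formula $V_{n,k} = \bbE[V_{n,k}^{\alpha,T}]$.

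For the converse, given a sequence $V = (V_{n,k})$ satisfying the self-consistency identity, I would equip the $(\alpha,T)$-urn with arrival-time law $P_{\alpha,V}$ from~\eqref{eq:egp:arrivals}. Marginalizing~\eqref{eq:cppf} against $P_{\alpha,V}$ assigns probability $P_{\alpha,V}(T^\star) \cdot V_{n,k}^{\alpha,T^\star} \cdot \prod_j \Gamma(|B_j|-\alpha)/\Gamma(1-\alpha)$ to each partition, and the self-consistency together with the product structure forces the leading coefficient to depend only on $(n,k)$. The marginal EPPF is therefore Gibbs~\eqref{eq:egp:eppf} and $\Pi$ is an exchangeable Gibbs partition.

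The main obstacle is verifying that the self-consistency identity is equivalent to the Gibbs recursion~\eqref{eq:egp:recursion}. I expect the cleanest route is to unpack $P_{\alpha,V}$ as a telescoping product of conditionals from~\eqref{eq:egp:arrivals} and cancel the resulting $\Gamma$-ratios against those in~\eqref{eq:cond:gibbs:v}, together with the combinatorial identity $\sum_{\{B\}:\min B_j = T_j} \prod_j \Gamma(|B_j|-\alpha)/\Gamma(1-\alpha) = 1/V_{n,k}^{\alpha,T}$ that follows from~\eqref{eq:cppf} being a valid conditional distribution over partitions given $T$.
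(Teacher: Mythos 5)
The paper never writes out a proof of this proposition---it is presented as an immediate consequence of the factorization \eqref{eq:cppf} and the surrounding discussion---so your reconstruction is being measured against that implicit argument, and you follow the same route: factor the marginal law of $\Pi_n$ into an arrival-time probability times $V_{n,k}^{\alpha,T^\star}\prod_j\Gamma(|B_j|-\alpha)/\Gamma(1-\alpha)$, argue that exchangeability forces the leading coefficient to depend on $(n,k)$ alone, and invoke the Gnedin--Pitman characterization. Your closing combinatorial identity $\sum_{\{B\}:\min B_j=T_j}\prod_j\Gamma(|B_j|-\alpha)/\Gamma(1-\alpha)=1/V_{n,k}^{\alpha,T}$ is correct and is exactly the normalization needed. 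Two steps, however, need tightening.

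First, $\bbP[T=T^\star]$ is not the right factor. Since $T^\star$ specifies only the first $k$ record indices, the conditioning event underlying \eqref{eq:cppf} is $\{T_1=T_1^\star,\dotsc,T_k=T_k^\star,\ T_{k+1}>n\}$; without the clause $T_{k+1}>n$ the decomposition of $\bbP[\Pi_n=\{B_1,\dotsc,B_k\}]$ is false, because the urn could open a $(k{+}1)$st block before step $n$. With that correction the clean per-sequence statement of self-consistency is $\bbP[T_{1:k}=t,\ T_{k+1}>n]\cdot V_{n,k}^{\alpha,t}=V_{n,k}$ for every admissible $t$, verified by telescoping \eqref{eq:egp:arrivals} and using the recursion \eqref{eq:egp:recursion} to get $\bbP[T_{k+1}>n\mid T_k]=\tfrac{V_{n,k}}{V_{T_k,k}}\tfrac{\Gamma(n-k\alpha)}{\Gamma(T_k-k\alpha)}$. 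Your final ``averaging'' step recovers the displayed expectation formula only if $P_{\alpha,V}$ is read as this constrained law; if instead $P_{\alpha,V}(T_{1:k})$ is the unconstrained product of the conditionals \eqref{eq:egp:arrivals}, a direct computation gives $P_{\alpha,V}(t)\,V_{n,k}^{\alpha,t}=V_{t_k,k}\Gamma(t_k-k\alpha)/\Gamma(n-k\alpha)$, whose sum over $t$ with $t_k\le n$ is not $V_{n,k}$ in general (test $k=1$ for the CRP: it would force $\theta=-\alpha$). So you must state explicitly which measure the expectation $\bbE[V_{n,k}^{\alpha,T}]$ is taken under. Second, ``the coefficient must be a function of $(n,k)$ alone'' deserves a sentence: exchangeability only says the coefficient is a function of the block-size multiset, while the decomposition says it is a function of the record indices; these are reconciled by noting that every record-index sequence with $k$ records in $[n]$ is compatible with the size profile $(n-k+1,1,\dotsc,1)$, so the coefficient is constant across all record sequences with the same $(n,k)$.
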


It is straightforward to verify that $\Phi(\Pi)$ is an $(\alpha,t)$-graph if and only if
$\Pi$ is an $(\alpha,t)$-urn. This correspondence is used in \cref{sec:examples} to 
classify some ${(\alpha,t)}$-graphs according to the urns they define. 
It also allows us to translate properties of random graphs into properties of random partitions, 
and vice versa. \cref{theorem:sb} implies the following result on partitions, 
which gives a representation of exchangeable Gibbs partitions.
\begin{corollary}
  \label{corollary:G:S}
  A random partition $\Pi$ is an ${(\alpha,t)}$-urn if and only if
  it is distributed as ${\Pi\equdist (L_1,L_2,\ldots)}$,
  for variables $L_n$ generated according to  \eqref{eq:sb:1}--\eqref{eq:sb:3}.
\end{corollary}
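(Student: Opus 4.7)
The plan is to deduce the corollary directly from \cref{theorem:sb} via the homeomorphism $\map : \mathcal{P}(\mathbb{N}) \to \G$ sending a partition (represented as a sequence of block labels) to its corresponding edge-sequence graph. Since $\map$ is a measurable bijection with measurable inverse, equalities in distribution transfer losslessly between $\mathcal{P}(\mathbb{N})$ and $\G$.

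First I would record the equivalence that is asserted but not explicitly proved in the paragraph following \cref{corollary:G:S}: namely that $\map(\Pi)$ is an $(\alpha,t)$-graph if and only if $\Pi$ is an $(\alpha,t)$-urn. This reduces to checking that the conditional probabilities match step by step. Writing $\Pi$ as the label sequence $(L_1,L_2,\ldots)$, the $(\alpha,t)$-urn scheme prescribes $L_n = k$ when $n = t_k$ and otherwise draws $L_n$ proportionally to $|\{i < n : L_i = j\}| - \alpha$ over existing blocks $j$. But this is exactly the selection probability $P_\alpha(\argdot\,;L_1,\ldots,L_{n-1})$ from \eqref{eq:p:alpha}, which is how the $(\alpha,t)$-graph scheme draws each $L_n$. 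Thus the conditional law of $L_n$ given $L_1,\ldots,L_{n-1}$ is the same under both schemes, so the joint laws of $(L_1,L_2,\ldots)$ coincide, and by $\map$ the laws of $\Pi$ and $G = \map(\Pi)$ determine one another.

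With this equivalence in hand, the corollary follows: if $\Pi$ is an $(\alpha,t)$-urn, then $G := \map(\Pi)$ is an $(\alpha,t)$-graph, so \cref{theorem:sb} yields $G \equdist H(\alpha,T)$. Since $H(\alpha,T) = \map((L_1,L_2,\ldots))$ with $(L_1,L_2,\ldots)$ generated by \eqref{eq:sb:1}--\eqref{eq:sb:3}, applying the measurable bijection $\map^{-1}$ gives $\Pi \equdist (L_1,L_2,\ldots)$. Conversely, if $\Pi \equdist (L_1,L_2,\ldots)$ for such variables, then $\map(\Pi) \equdist H(\alpha,T)$ is an $(\alpha,T)$-graph by \cref{theorem:sb}, and the equivalence above forces $\Pi$ to be an $(\alpha,T)$-urn.

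There is essentially no obstacle: the corollary is a pure transport of \cref{theorem:sb} along $\map$. The only point requiring care is the verification that the one-end-at-a-time graph generation and the urn dynamics really do induce the same conditional law on the label sequence; this is routine because both schemes read off the same functional \eqref{eq:p:alpha} of the current label counts, and the deterministic insertions at the prescribed arrival times $t_k$ agree on both sides.
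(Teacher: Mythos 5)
Your proof is correct and follows the same route as the paper, which states the corollary as an immediate consequence of \cref{theorem:sb} via the bijection $\map$ and the (asserted as straightforward) equivalence between $(\alpha,t)$-urns and $(\alpha,t)$-graphs. Your explicit check that both schemes draw $L_n$ from the same conditional law \eqref{eq:p:alpha} is exactly the ``straightforward to verify'' step the paper leaves to the reader.
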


\section{Degree asymptotics}
\label{sec:degree:asymptotics}

Let $G$ be an $(\alpha,t)$-graph, and $G_n$ the subgraph given by its first $n$ edges. The 
\kword{degree sequence} of $G_n$ is the vector $\mathbf{D}(n)=(\deg_k(n))_{k\geq 1}$, where vertices are ordered by appearance. Denote by $m_d(n)$ the number of vertices in $G_n$ with degree $d$. The \kword{empirical degree distribution}
  \begin{align*}
    (p_d(n))_{d\geq 1} := |\mathbf{V}(g_n)|^{-1}(m_d(n))_{d\geq 1}
  \end{align*}
is the probability that a vertex sampled uniformly at random from $G_n$ has degree $d$. The degree sequence and the degree distribution as $G_n$ grows large are characterized by the scaling behavior induced by $\alpha$ and $t$, which yields  
power laws and related properties.

\subsection{Linear and sub-linear regimes}
\label{sec:linear:sublinear}

As will become clear in the next section, the scaling behavior of $(\alpha,t)$-graphs is the result of products of the form
\begin{align} \label{eq:tail:product}
  W_{j,k} = \prod_{i=j+1}^k (1-\Psi_i) \quad \text{as} \quad k\to\infty \;,
\end{align}
where $(\Psi_j)_{j>1}$ are as in \eqref{eq:sb:1}. In particular, two regimes of distinct limiting behavior emerge. To which of the two regimes an $(\alpha,t)$-graph belongs is determined by whether or not $W_{j,k}$ converges to a non-zero value as $k\to\infty$. 

We consider $(\alpha,t)$-graphs that satisfy the following assumption:
\begin{align} \label{eq:vertex:arrival:rate}
  |\mathbf{V}(G_n)|/n^{\sigma} \xrightarrow[n\to\infty]{\text{\small a.s.}} \mu_{\sigma}^{-\sigma} \quad \text{ for some } \quad 0 < \sigma \leq 1 \quad \text{ and } \quad 0 < \mu_{\sigma} < \infty \;.
\end{align}
Slower vertex arrival rates (\egc, logarithmic) result in graphs that are almost surely dense (see \cref{sec:degree:distributions}), and as such exhibit less interesting structural properties. For example, 
in order to generate power law distributions in $(\alpha,t)$-graphs, the asymptotic arrival rate must be super-logarithmic, which follows from work on exchangeable random partitions and can be read from \cite[Chapter 3]{Pitman:2006}.

For a growing graph sequence satisfying the assumption \eqref{eq:vertex:arrival:rate}, consider the limiting average degree,
\begin{align*}
  \lim_{n\to\infty} \bar{d}_n = \lim_{n\to\infty} \frac{2n}{\mu^{-\sigma}_{\sigma}n^{\sigma}} = \lim_{n\to\infty} \frac{2}{\mu^{-\sigma}_{\sigma}} n^{1-\sigma} \;.
\end{align*}
The average degree is almost surely bounded if $\sigma=1$, which we call the \kword{linear} regime;  for $\sigma\in(0,1)$, the \kword{sub-linear} regime, it diverges. This is a consequence of \cref{prop:tail:product:convergence} below: For a graph $G_n$ on $k(n)$ vertices, the probability that the end of edge $n+1$ is attached to vertex $j$ is equal to $\Psi_j W_{j,k(n)}$, which results in vertex $j$ participating in a constant proportion of edges if and only if $ W_{j,k(n)}$ is bounded away from zero as $n$ grows large. 
\begin{proposition} \label{prop:tail:product:convergence}
  For fixed $\alpha\in(-\infty,1)$ and $t\in\T$ such that \eqref{eq:vertex:arrival:rate} is satisfied for some $\sigma\in(0,1]$, let $W_{j,k}$ be as in \eqref{eq:tail:product}. Then for each $j\geq 1$, $W_{j,k}$ converges almost surely as $k\to\infty$ to some random variable $W_{j,\infty}$, which is non-zero if and only if $\sigma<1$.
\end{proposition}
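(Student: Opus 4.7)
The approach is two-pronged: first establish almost-sure convergence by monotonicity, then apply a first-moment computation together with Kolmogorov's zero-one law to decide when the limit is strictly positive.

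Since $\Psi_i\sim\BetaDist(1-\alpha,b_i)$ with $b_i:=t_i-1-(i-1)\alpha>0$ for $i\geq 2$, each $\Psi_i\in[0,1)$ almost surely, so $k\mapsto W_{j,k}$ is non-increasing and $[0,1]$-valued; it therefore converges almost surely to some $W_{j,\infty}\in[0,1]$, which proves the first part of the claim. Moreover, $W_{j,k_0}>0$ a.s.\ for any finite $k_0$, so $\{W_{j,\infty}>0\}=\{\prod_{i>k_0}(1-\Psi_i)>0\}$ belongs to the tail $\sigma$-algebra of the independent family $(\Psi_i)_{i>j}$, and Kolmogorov's zero-one law yields $\bbP(W_{j,\infty}>0)\in\{0,1\}$.

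To decide which alternative occurs, I would use independence to write $\bbE[W_{j,k}]=\prod_{i=j+1}^{k}b_i/(b_i+1-\alpha)$, and bounded convergence to identify $\bbE[W_{j,\infty}]$ with the corresponding infinite product. Since $\log(1-(1-\alpha)/(b_i+1-\alpha))\sim -(1-\alpha)/b_i$, this product is strictly positive if and only if $\sum_{i>j}1/b_i<\infty$. The identity $|\mathbf{V}(G_n)|=\max\{k:t_k\leq 2n\}$ inverts \eqref{eq:vertex:arrival:rate} to $t_k\asymp k^{1/\sigma}$, so $b_i\asymp i^{1/\sigma}$, and the $p$-series criterion gives convergence of $\sum 1/b_i$ iff $1/\sigma>1$, i.e., $\sigma<1$.

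Combining these facts: when $\sigma<1$, $\bbE[W_{j,\infty}]>0$ forces $\bbP(W_{j,\infty}>0)>0$, and the zero-one law upgrades this to $W_{j,\infty}>0$ a.s.; when $\sigma=1$, $\bbE[W_{j,\infty}]=0$ together with $W_{j,\infty}\geq 0$ gives $W_{j,\infty}=0$ a.s. The main obstacle I anticipate is the inversion step in paragraph two: turning the one-sided convergence in \eqref{eq:vertex:arrival:rate} into a genuine two-sided $\Theta$-asymptotic for $t_k$ requires some bookkeeping with the half-edge vs.\ full-edge convention, but is otherwise a standard consequence of monotonicity of $k\mapsto t_k$.
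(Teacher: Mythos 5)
Your proof is correct, but it follows a genuinely different route from the paper's. The paper normalizes $W_{j,k}$ by its mean to obtain the martingale $M_{j,k}:=W_{j,k}/\bbE[W_{j,k}]$, proves $L_2$-boundedness of $M_{j,k}$, and then decides positivity of the limit via Markov bounds on $W_{j,k}$ (for $\sigma=1$) and on $W_{j,k}^{-1}$ (for $\sigma<1$). You instead get almost-sure convergence for free from monotonicity of $k\mapsto W_{j,k}$, and settle the dichotomy by combining the first-moment product $\bbE[W_{j,k}]=\prod_{i=j+1}^{k}b_i/(b_i+1-\alpha)$ with Kolmogorov's zero--one law applied to the tail event $\{W_{j,\infty}>0\}$. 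The two arguments share their analytic core --- both reduce to whether $\sum_i (1-\alpha)/b_i$ converges, which via the inversion $t_k\asymp k^{1/\sigma}$ of \eqref{eq:vertex:arrival:rate} happens exactly when $1/\sigma>1$ --- but your version is more elementary and logically tighter: the paper's phrasing that ``$W_{j,\infty}$ exists if and only if $\lim_k\bbE[W_{j,k}]$ exists'' is looser than your direct monotone-convergence argument, and the zero--one law delivers the almost-sure statement in one stroke. What the paper's heavier machinery buys is reusable structure: the normalized martingale limit $M_{j,\infty}$ and the explicit constants in the asymptotics of $\bbE[W_{j,k}]$ are the ingredients that drive the degree-sequence results later in Section 3, so the martingale decomposition is not wasted effort there even though it is unnecessary for this proposition alone. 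Your flagged concern about inverting the one-sided limit in \eqref{eq:vertex:arrival:rate} into a two-sided asymptotic for $t_k$ is legitimate but harmless: since $k\mapsto t_k$ is strictly increasing and $|\mathbf{V}(G_n)|=\max\{k : t_k\leq 2n\}$, the stated convergence does pin down $t_k\sim c\,k^{1/\sigma}$ for a positive constant $c$, which is all the $p$-series test requires.
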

\begin{remark}
  For slower vertex arrivals (\egc logarithmic) or when the limiting number of vertices is finite, $W_{j,k(n)}$ also converges to a non-zero value.
\end{remark}

\subsection{Limiting joint distributions of degree sequences for given arrival times}
\label{sec:deg:dist:fixed}

The previous section suggests that the limit of the scaled degrees should depend on the random variables $(\Psi_j)_{j>1}$. Indeed, for any $(\alpha,t)$-graph $G$, it can be shown that for any $r\in\bbN_+$, 
\begin{align} \label{eq:limit:seq:full}
 \bigl( n^{-1} \deg_j(n) \bigr)_{1\leq j \leq r} \xrightarrow[n\to\infty]{\text{\small a.s.}} \bigl( \xi_j  \bigr)_{1\leq j \leq r} \quad \text{where} \quad \xi_j \equdist \Psi_j \prod_{i=j+1}^{\infty} (1-\Psi_i) \;,
\end{align}
and $(\Psi_j)_{j>1}$ are as in \eqref{eq:sb:1}. \Citet{Griffiths:Spano:2007} showed that relative degrees with such a limit uniquely characterize exchangeable Gibbs partitions among all exchangeable partitions; if the random partition ${\Phi^{-1}(G)}$ is exchangeable, that result applies to $G$ (see \cref{sec:exchangeable:partitions}). For a general ${(\alpha,t)}$-graph, ${\Phi^{-1}(G)}$ need not be exchangeable, and indeed there are examples for which ${n^{-1}\mathbf{D}(n)}$ converges to zero, in which case $W_{j,k(n)}$ does, as well. 
In such cases, one may ask more generally whether a finite, non-zero limit
\begin{align*}
  \mathbf{D}_{\infty} := \lim_{n\to\infty} n^{-1/\gamma} \mathbf{D}(n) \;,
\end{align*}
exists for an appropriate scaling exponent $\gamma$. \cref{thm:limiting:degree:sequence} establishes that this is true for $(\alpha,t)$-graphs.
\begin{theorem} \label{thm:limiting:degree:sequence}
  Let $G$ be an $(\alpha,t)$-graph for some $\alpha\in(-\infty,1)$ and $t\in \T$. Then \eqref{eq:limit:seq:full} holds. If $t$ is such that \eqref{eq:vertex:arrival:rate} holds with $\sigma=1$, assume 
  \begin{align*}
    \lim_{j\to \infty} \frac{t_j}{j} = \mu \in (1,\infty) \;.
  \end{align*}
  Then for every $r\in \bbN_+$, there exists a positive, finite constant $M_r(t)$, and positive random variables $\zeta_1,\dotsc,\zeta_r$ such that
  \begin{align*}
    M_r(t) n^{-r/\gamma} \deg_1(n)\dotsm \deg_r(n) \xrightarrow[n\to\infty]{\text{\small a.s.}} \zeta_1 \dotsm \zeta_r  \quad \text{where} \quad \gamma = \frac{\mu - \alpha}{\mu - 1}  \;.
  \end{align*}
  The mixed moments also converge: For any $p_1,\dotsc,p_r > -(1-\alpha)/2$ with $\bar{p}=\sum_{j=1}^r p_j$, there exists some $M_{\bar{p}}(t)\in(0,\infty)$ such that
  \begin{align} \label{eq:limiting:degree:sequence}
    M_{\bar{p}}(t) \mathbb{E}\bigl[ \lim_{n\to\infty} n^{-\bar{p}/\gamma} \deg_1(n)^{p_1}\dotsm \deg_r(n)^{p_r}\bigr] = \mathbb{E}\bigl[\zeta_1^{p_1}\dotsm \zeta_r^{p_r}\bigr]\;. 
  \end{align}
  Furthermore,
  \begin{align} \label{eq:degree:sequence:g}
    \bigl(\zeta_j \bigr)_{1\leq j \leq r} \equdist \bigl( \Psi_j \prod_{i=j+1}^r (1 - \Psi_i)  \bigr)_{1\leq j \leq r} \;,
  \end{align}
  where $\Psi_1=1$ and $(\Psi_j)_{j>1}$ are as in \eqref{eq:sb:1}.
\end{theorem}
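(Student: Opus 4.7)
The plan rests on a structural consequence of \cref{theorem:sb}. Write $W_{j,k} = \prod_{i=j+1}^k(1-\Psi_i)$; conditionally on $(\Psi_j)$, an edge-end at a non-arrival step $m$ attaches to vertex $j$ with probability $\Psi_j W_{j,k(m)}$. Because $\Psi_1 = 1$, the telescoping identity $\sum_{j=1}^r \Psi_j W_{j,k} = W_{r,k}$ decouples the attachment process restricted to the first $r$ vertices into (i) a Bernoulli$(W_{r,k(m)})$ trial determining whether the edge attaches to $\{1,\dots,r\}$ at all, and (ii), given that event, a $k$-independent multinomial choice among $\{1,\dots,r\}$ with probabilities $\pi_j := \Psi_j \prod_{i=j+1}^r(1-\Psi_i)$. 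Letting $S_r(n)$ count non-initial attachments to $\{1,\dots,r\}$ among the first $n$ edge-ends, the vector $(\deg_j(n))_{j \leq r}$ is therefore multinomial$(S_r(n),\pi)$ conditionally on $S_r(n)$ and $(\Psi_j)$. For \eqref{eq:limit:seq:full}, in the sub-linear regime $W_{r,k}\to W_{r,\infty}>0$ a.s.\ by \cref{prop:tail:product:convergence}; a conditional SLLN applied to the Bernoulli sum $S_r(n)$ gives $S_r(n)/n \to W_{r,\infty}$ a.s., and the multinomial SLLN yields $\deg_j(n)/n \to \pi_j W_{r,\infty} = \Psi_j \prod_{i>j}^{\infty}(1-\Psi_i)$ a.s.

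For the linear regime, I first establish the decay rate of $W_{j,k}$. Under $t_j/j \to \mu$, digamma asymptotics applied to the Beta mean give $\mathbb{E}[\log(1-\Psi_\ell)] = -(1-\alpha)/((\mu-\alpha)\ell) + O(\ell^{-2})$ and $\operatorname{Var}(\log(1-\Psi_\ell)) = O(\ell^{-2})$, so by Kolmogorov's theorem the centered sum $\sum_{\ell}[\log(1-\Psi_\ell) - \mathbb{E}\log(1-\Psi_\ell)]$ converges a.s. Combined with the drift, this yields $k^{1-1/\gamma} W_{j,k} \to W_{j,\infty}^*$ a.s.\ to a finite positive limit (with $1-1/\gamma = (1-\alpha)/(\mu-\alpha)$). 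A Riemann-sum approximation using $k(m) \sim m/\mu$ then gives $\mathbb{E}[S_r(n)\mid (\Psi_j)] \sim D\, W_{r,\infty}^* \, n^{1/\gamma}$ for an explicit deterministic constant $D(\mu,\alpha) > 0$, and a conditional Bernoulli variance bound ($\operatorname{Var}(S_r(n)\mid(\Psi_j))$ is of the same order as the mean) promotes this to $n^{-1/\gamma}S_r(n) \to D W_{r,\infty}^*$ a.s. Combined with the multinomial SLLN on the conditional sampling, $n^{-r/\gamma}\prod_{j \leq r}\deg_j(n)$ converges a.s.\ to a non-degenerate limit proportional to $\prod_{j \leq r}\pi_j$, and $M_r(t) = D^{-r}$ absorbs the deterministic factor.

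The mixed moments \eqref{eq:limiting:degree:sequence} I would obtain from the P\'olya-urn product martingale: conditionally on $(\Psi_j)$ and arrival times, $\prod_{j=1}^r \Gamma(\deg_j(n) + p_j - \alpha)/\Gamma(\deg_j(n) - \alpha)$ divided by its deterministic predictable compensator is a non-negative martingale, whose $L^2$-boundedness is ensured by the hypothesis $p_j > -(1-\alpha)/2$ via integrability of negative moments of Beta random variables; this lifts a.s.\ convergence to $L^1$, yielding the moment identity with the normalizing constant $M_{\bar p}(t)$ determined by the compensator. The joint distributional identity $(\zeta_j)\equdist(\Psi_j\prod_{i>j}^r(1-\Psi_i))$ is then read off by matching all joint moments, invoking the beta-gamma algebra from the discussion following \cref{theorem:sb} to absorb the random tail factor $W_{r,\infty}^*$ into a distributional reparametrization of the finite Beta vector. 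The principal obstacle is precisely this identification: the a.s.\ limit naively involves $W_{r,\infty}^*$, which is a nontrivial random function of $(\Psi_\ell)_{\ell > r}$ independent of $(\Psi_\ell)_{\ell \leq r}$, whereas the target form involves only the first $r$ Beta variables; reconciling them requires careful beta-gamma identities to fold the infinite-dimensional tail into the finite representation in a way consistent with the mixed-moment constants $M_{\bar p}(t)$.
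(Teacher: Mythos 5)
Your route is genuinely different from the paper's for the almost-sure convergence, and it is sound in outline. The paper does not use the stick-breaking representation at all here: it follows M\'ori's method, defining the gamma-ratio martingale $Z_n(p,t)$ of \eqref{eq:martingale:z} directly from the sampling probabilities \eqref{eq:p:alpha}, proving $L_2$-boundedness for $p_j>-(1-\alpha)/2$ (\cref{lemma:degree:sequence:martingale}), and then showing by a separate deterministic computation (\cref{lemma:z:asymptotics}) that $Z_n(p,t)=M_{\bar p}(t)\,n^{-\bar p/\gamma}\prod_j\deg_j(n)^{p_j}(1+O(n^{-1}))$; the identity \eqref{eq:degree:sequence:g} is then read off by evaluating $\bbE[Z_{t_r}(p,t)]$ via optional stopping, which telescopes into a product of beta moments. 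Your decomposition via \cref{theorem:sb} into a Bernoulli$(W_{r,k(m)})$ thinning followed by a $k$-independent multinomial with weights $\pi_j=\Psi_j\prod_{i=j+1}^r(1-\Psi_i)$ is correct and buys something the paper only states informally: an explicit description of the a.s.\ limit as $(c\,W^\ast_{r,\infty})\cdot\pi_j$ with the tail factor $W^\ast_{r,\infty}$ \emph{independent} of $(\Psi_j)_{j\le r}$. That independence also dissolves what you call the ``principal obstacle'': there is no need for beta-gamma gymnastics to fold the tail into the finite beta vector, because the mixed moments factorize as $\bbE[(cW^\ast_{r,\infty})^{\bar p}]\cdot\bbE[\prod_j\pi_j^{p_j}]$, and one simply \emph{defines} $M_{\bar p}(t):=\bbE[(cW^\ast_{r,\infty})^{\bar p}]^{-1}$ — exactly the role these constants play in the paper, where they arise instead as the compensator constants $e^{C_1+C_2}$ of \cref{lemma:z:asymptotics}. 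For the moment convergence itself you revert to the paper's own device (the product martingale with $L_2$ bound from $p_j>-(1-\alpha)/2$), so that part is not a different route.

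Two loose ends you should close. First, in the linear regime your promotion of $n^{-1/\gamma}S_r(n)\to D\,W^\ast_{r,\infty}$ to almost-sure convergence needs more than a conditional variance bound: Chebyshev gives deviation probabilities of order $n^{-1/\gamma}$, which are not summable since $\gamma>1$, so you must pass to a polynomial subsequence and interpolate using monotonicity of $S_r(n)$ and regular variation of $n^{1/\gamma}$ (standard, but not automatic). Second, to define $M_{\bar p}(t)$ for the negative exponents allowed by the hypothesis you need $\bbE[(W^\ast_{r,\infty})^{\bar p}]\in(0,\infty)$ for $\bar p$ down to $-r(1-\alpha)/2$, i.e.\ control of negative moments of the limit of the normalized product martingale; this is the analogue of the $\bbE[W_{j,k}^{-1}]$ computation in the proof of \cref{prop:tail:product:convergence} and of the $L_2$ bound in \cref{lemma:degree:sequence:martingale}, and should be stated. (Your claim that $k^{1-1/\gamma}W_{j,k}$ converges to a finite positive limit under only $t_j/j\to\mu$ quietly assumes summability of the drift-approximation errors; the paper's own proof makes the same implicit assumption, so I do not count it against you.)
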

In the sub-linear regime, \eqref{eq:limit:seq:full} agrees with and generalizes the result of \cite{Griffiths:Spano:2007} for exchangeable Gibbs partitions (though the proof uses different methods). In the linear regime, the mixed moments of the scaled degrees also converge to those of products of independent beta random variables.
However, the result does \emph{not} completely describe the joint distributions, due to the presence of 
the unknown scaling terms $M_{\bar{p}}(t)$. These terms depend on the moments $p_1,\dotsc,p_r,$ and on $t$, and express the randomness remaining, for large $k(n)$, in $W_{j,k(n)}$ after the part that scales with $n$ is removed; in particular, they result from early fluctuations of the process. \cref{sec:examples} provides stronger results in several cases for which these terms are 
well-behaved.

\subsection{Neutrality}

It was noted in \cref{sec:graphs:urns} that the map $\Phi^{-1}$ from graphs to partitions translates results on graphs into results on partitions. Conversely, one can transfer properties from partitions to graphs. A sequence ${(X_1,X_2,\ldots)}$ of random variables is \kword{neutral-to-the-left} (NTL) if the relative increments
\begin{align*}
  X_1,\frac{X_2}{X_1+X_2},\ldots,\frac{X_j}{\sum_{i=1}^j X_{i}},\ldots
\end{align*}
are independent random variables in (0,1) \cite{Doksum:1974,Griffiths:Spano:2007}. If $\Pi$ is an exchangeable partition, \citet{Griffiths:Spano:2007} show that the limiting relative block sizes of $\Pi$ are NTL if and only if $\Pi$ is an exchangeable Gibbs partition. If so, the random graph ${\Phi(\Pi)}$ has a limiting degree sequence $\mathbf{D}_{\infty}$ that is NTL. Due to the representation in \cref{theorem:sb}, this property generalizes
beyond the exchangeable case:
\begin{corollary}
  The limiting degree sequence $\mathbf{D}_{\infty}$ of an $(\alpha,t)$-graph is NTL.
\end{corollary}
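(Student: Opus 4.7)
The plan is to read the NTL property directly off the representation in Theorem \ref{thm:limiting:degree:sequence}: under that representation the relative increments of $\mathbf{D}_\infty$ coincide in distribution with the independent beta variables $\Psi_j$ of \eqref{eq:sb:1}, so the conclusion follows from a stick-breaking telescoping identity.

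Fix $r\geq 1$. By Theorem \ref{thm:limiting:degree:sequence}, $(\zeta_j)_{1\leq j\leq r}\equdist (\Psi_j\prod_{\ell=j+1}^r(1-\Psi_\ell))_{1\leq j\leq r}$, with $\Psi_1=1$ and independent $\Psi_\ell\sim \BetaDist(1-\alpha,t_\ell-1-(\ell-1)\alpha)$ for $\ell\geq 2$. An induction on $j$, anchored at $\Psi_1=1$, gives
\begin{equation*}
  \sum_{i=1}^j \Psi_i\prod_{\ell=i+1}^j(1-\Psi_\ell)=1 \qquad \text{for all } j\geq 1\;.
\end{equation*}
Multiplying through by $\prod_{\ell=j+1}^r(1-\Psi_\ell)$ yields
\begin{equation*}
  \sum_{i=1}^j\zeta_i \equdist \prod_{\ell=j+1}^r(1-\Psi_\ell)\;, \qquad \text{whence} \qquad \frac{\zeta_j}{\sum_{i=1}^j\zeta_i}\equdist \Psi_j\;,
\end{equation*}
jointly in $1\leq j\leq r$. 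Since $(\Psi_j)_{j\geq 2}$ are mutually independent by \eqref{eq:sb:1}, so are the relative increments appearing in the definition of NTL; the initial term, proportional to $\Psi_1=1$, is deterministic and trivially independent of the rest.

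The one mildly subtle point is that the joint law of $(\zeta_1,\dotsc,\zeta_r)$ is not consistent as $r$ grows --- enlarging the window by one rescales every earlier component by a factor $(1-\Psi_{r+1})$. The ratios $\zeta_j/\sum_{i\leq j}\zeta_i$ are nevertheless scale invariant and reduce to the same $\Psi_j$ for every $r\geq j$; since NTL is a statement about finite-dimensional marginals, this is enough for the property to transfer from each finite window to the whole sequence $\mathbf{D}_\infty$. Beyond this bookkeeping there is no substantial obstacle once Theorem \ref{thm:limiting:degree:sequence} is in hand --- the corollary is essentially an algebraic consequence of the stick-breaking structure already encoded in the representation.
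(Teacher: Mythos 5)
Your argument is correct and is exactly the route the paper intends: the corollary is read off from the stick-breaking representation in \cref{thm:limiting:degree:sequence} (equivalently \cref{theorem:sb}), since the telescoping identity $\sum_{i\leq j}\Psi_i\prod_{\ell=i+1}^j(1-\Psi_\ell)=1$ makes the relative increments equal to the independent $\Psi_j$, and these ratios are scale-invariant and stable under enlarging $r$. The paper does not spell this out beyond the remark that the property ``generalizes beyond the exchangeable case'' via the representation, so your write-up, including the observation that the finite-$r$ laws are only consistent up to a common scale factor, is a faithful and slightly more explicit version of the same proof.
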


\subsection{Sparsity and power law degrees}
\label{sec:degree:distributions}

Suppose $G_n$ is the subgraph of an $(\alpha,T)$-graph $G$, given by the first $n$ edges. Since $G_n$ is finite,
one can sample a vertex uniformly at random from its vertex set 
and report its degree $D_n$. One can then ask whether the sequence of random degrees $D_n$ converges in distribution
to some limiting variable $D$. We show in this section that that is indeed the case for $(\alpha,T)$-graphs, under some regularity conditions. We also show how the degree distribution is related to the sparsity, or, equivalently, the edge density, of $(G_n)$.

The sequence ${(G_n)}$ is defined to be $\sparsity$-\kword{dense} if
  \begin{align}
    \label{epsilon:density}
    \underset{n\to\infty}{\lim\sup} \frac{n}{|\mathbf{V}(G_n)|^{\sparsity}} = c_{\sparsity} > 0 \quad \text{for some} \quad 
    \sparsity\geq 1 \;.
  \end{align}
If $\sparsity < 2$, the graph sequence is typically called \emph{sparse}; when $\sparsity \geq 2$, the sequence is \emph{dense}. Note that ${\sparsity > 2}$ is only possible for multigraphs. 
The level of sparsity follows from $\sigma$: Graph models in the linear regime 
correspond to $\sparsity = 1$ \cite{Bollobas:Janson:Riordan:2007,Berger:etal:2014,Aiello:Chung:Lu:2002}; graph models in the sub-linear regime with $\sigma > \frac{1}{2}$ have appeared in the literature \cite{Caron:Fox:2017,Veitch:Roy:2015,Crane:Dempsey:2016,Cai:etal:2016}, with $1 < \sparsity < 2$. See \cref{sec:examples} for examples. 

For functions $a$ and $b$, we use the notation
  \begin{align*}
    a(n)\limscale{n}{\infty} b(n)
    \quad:\Leftrightarrow\quad
    \lim_{n\to\infty} a(n)/b(n) \to 1 \;.
  \end{align*}
The sequence $(G_n)$ has \kword{power law degree distribution} with exponent $\eta > 1$ if
  \begin{align}
    p_d(n) = \frac{m_d(n)}{|\mathbf{V}(G_n)|} \xrightarrow[n\to\infty]{} p_d \limscale{d}{\infty} L(d)d^{-\eta} \quad \text{for all large $d$} \;,
  \end{align}
for some slowly varying function $L(d)$, that is, ${\lim_{x\to\infty} L(rx)/L(x) = 1}$ for all ${r>0}$ \cite{Bingham:1989,Feller:1971}. 

In the sub-linear regime, the degree distribution follows from results due to Pitman and Hansen \cite[Lemma 3.11]{Pitman:2006}, see also \cite{Gnedin:Hansen:Pitman:2007}, on the limiting block sizes of exchangeable random partitions (see \cref{sec:exchangeable:partitions} for more details). In particular, if \eqref{eq:vertex:arrival:rate} is satisfied by an $(\alpha,t)$-graph $G^{\alpha}=\Phi(\Pi^{\alpha})$ for $\sigma=\alpha\in(0,1)$, then there exist an exchangeable random partition $\Pi$ and a positive, finite random variable $S$ such that ${|\mathbf{V}(\Phi(\Pi_{2n}))|/n^{\alpha} \xrightarrow[n\to\infty]{\text{\small a.s.}} S}$, $\Pi=\Pi^{\alpha}$ and $S=\mu_{\alpha}$. The limiting degree distribution is
\begin{equation} 
  \label{eq:degree:distn:sublinear}
  p^{\alpha}_d 
  \quad=\quad 
  \alpha \frac{\Gamma(d - \alpha)}{\Gamma(d+1) \Gamma(1 - \alpha)}
  \quad\limscale{d}{\infty}\quad
  \frac{\alpha}{\Gamma(1-\alpha)} d^{-(1+\alpha)} \;.
\end{equation}

In the linear regime, $\sigma=1$, with limiting mean interarrival time $\mu_1$. We show (see \cref{sec:proof:degree:distributions}) that the resulting limiting degree distribution is a generalization of the classical Yule--Simon distribution (which corresponds to $\alpha=0$) \cite{Yule:1925,Simon:1955,Durrett:2006},
\begin{equation} 
  \label{eq:degree:distn:linear}
  p^{\gamma}_d 
  \quad=\quad 
  \gamma \frac{ \Gamma(d - \alpha) \Gamma(1 - \alpha + \gamma) }{ \Gamma(d + 1 - \alpha + \gamma) \Gamma(1 - \alpha) } 
  \quad
  \limscale{d}{\infty}
  \quad
  \gamma \frac{ \Gamma(1 - \alpha + \gamma) }{ \Gamma(1 - \alpha) }d^{-(1+ \gamma)} \;,
\end{equation}
where ${\gamma := \frac{\mu_{1} - \alpha}{\mu_{1} - 1}}$, as in \eqref{eq:limiting:degree:sequence}.

The tail behavior of the two distributions \eqref{eq:degree:distn:sublinear}, \eqref{eq:degree:distn:linear} partition the range of possible values of the power law exponent, as summarized by the following theorem.

\begin{theorem}
  \label{theorem:degree:distn}
  Let $G$ be a random $(\alpha,T)$-graph for some $\alpha \in (-\infty,1)$ and $T\in\T$. If 
  \begin{align*}
    T_j j^{-1/\sigma} \xrightarrow[j\to\infty]{\text{\small a.s.}} \mu_{\sigma} \quad \text{ for some } \quad \sigma \in (0,1] \quad \text{ and } \quad 1 < \mu_{\sigma} < \infty \;,
  \end{align*}
  then $G$ has $\sparsity$-density with $\sparsity=1/\sigma$. If $\sigma=1$, assume that $\bbE[\Delta_j]=\mu_1$, ${\text{Var}(\Delta_j) < \infty}$ for all $j\in\bbN_+$, and $\lvert\text{Cov}(\Delta_i,\Delta_j)\rvert \leq C^2_{\Delta}\lvert i-j\rvert^{-\ell_{\Delta}}$ for all $i,j > 1$, some $C^2_{\Delta}\geq 0$, and some $\ell_{\Delta}>0$. 
  Then the degree distribution converges asymptotically,
  \begin{align*}
    \frac{m_d(n)}{|\mathbf{V}(G_n)|} \xrightarrow[n\to\infty]{\text{\small p}}
    \begin{cases}
      p^{\alpha}_d  & \text{ if } \sigma = \alpha \in (0,1) \\
      p^{\gamma}_d & \text{ if } \sigma = 1
    \end{cases} \;,
  \end{align*}
  which for large $d$ follows a power law with exponent
  \begin{align*}
    \eta = 
    \begin{cases}
      1 + \alpha & \in (1,2) \quad\;\; \text{ if } \sigma = \alpha \in (0,1) \\
      1 + \gamma & \in (2,\infty) \quad \text{ if } \sigma = 1
    \end{cases} \;.
  \end{align*}
\end{theorem}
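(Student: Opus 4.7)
My plan is to establish the three conclusions in sequence: $\varepsilon$-density, distributional convergence of the empirical degree distribution, and the power-law tail. The $\varepsilon$-density statement is immediate from the hypothesis $T_j j^{-1/\sigma} \to \mu_\sigma$: since $|\mathbf{V}(G_n)|$ equals the largest $k$ with $T_k \leq 2n$, inversion gives $|\mathbf{V}(G_n)| \sim (2n/\mu_\sigma)^\sigma$ almost surely, and substitution into \eqref{epsilon:density} yields $\varepsilon = 1/\sigma$.

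For the sub-linear regime $\sigma = \alpha \in (0,1)$, I would follow the authors' suggestion and invoke the Pitman--Hansen theory of exchangeable Gibbs partitions. As established in \eqref{eq:cppf}, the conditional law of $\Phi^{-1}(G)$ given $(T_k)$ is already of Gibbs form with parameter $\alpha$; the arrival-rate hypothesis $\sigma = \alpha$ then matches the $\alpha$-diversity scaling of the Pitman--Yor process, so \cite[Lemma 3.11]{Pitman:2006} (see also \cite{Gnedin:Hansen:Pitman:2007}) delivers the limiting block-size distribution $p^\alpha_d$ conditionally on $(T_k)$, which transfers to the unconditional limit. Stirling's formula applied to $\Gamma(d-\alpha)/\Gamma(d+1)$ then produces the $d^{-(1+\alpha)}$ tail, with exponent $1 + \alpha \in (1,2)$.

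The linear case $\sigma = 1$ requires a direct analysis. Writing $K_n$ for the vertex count and $A_n = \mathds{1}[n+1 \in \{T_k\}_{k \geq 1}]$, the selection probability \eqref{eq:p:alpha} yields, for $m_d(n)$ the number of vertices of degree $d$ after $n$ edge-ends,
\begin{equation*}
  \mathbb{E}[m_d(n+1) - m_d(n) \mid L_1,\dots,L_n] = A_n\, \mathds{1}[d=1] + (1 - A_n)\,\frac{(d-1-\alpha)\, m_{d-1}(n) - (d-\alpha)\, m_d(n)}{n - \alpha K_n},
\end{equation*}
with the convention $m_0 \equiv 0$. The renewal hypotheses give $K_n/n \to 1/\mu_1$ in probability and long-run arrival frequency $1/\mu_1$. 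Under the ansatz $m_d(n)/K_n \to p_d$, the fluid limit of this recursion becomes
\begin{equation*}
  (\gamma + 1 - \alpha)\, p_1 = \gamma, \qquad (\gamma + d - \alpha)\, p_d = (d-1-\alpha)\, p_{d-1} \quad (d \geq 2),
\end{equation*}
with $\gamma = (\mu_1 - \alpha)/(\mu_1 - 1)$. Telescoping delivers the closed form \eqref{eq:degree:distn:linear}, and Stirling produces the $d^{-(1+\gamma)}$ tail. Since $\mu_1 > 1$ and $\alpha < 1$ force $\gamma > 1$, the exponent lies in $(2,\infty)$.

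The principal technical obstacle is promoting this mean identification to convergence in probability. My approach would be to work conditionally on the arrival sequence $(T_k)_{k \geq 1}$: given $(T_k)$, $(m_d(n))$ is a Markov chain with bounded increments, so its Doob martingale satisfies $\mathrm{Var}(m_d(n) \mid (T_k)) = O(n)$ by Azuma--Hoeffding, and the conditional fluid-limit analysis above identifies $\mathbb{E}[m_d(n) \mid (T_k)]/K_n \to p_d$. The covariance-decay hypothesis $|\mathrm{Cov}(\Delta_i, \Delta_j)| \leq C^2_\Delta |i-j|^{-\ell_\Delta}$ is what is needed to control $\mathrm{Var}(K_n)$: a standard sum-of-dependent-variables bound gives $\mathrm{Var}(K_n) = o(n^2)$ uniformly over $\ell_\Delta > 0$ (covering the summable, logarithmic, and polynomial regimes $\ell_\Delta > 1$, $\ell_\Delta = 1$, and $\ell_\Delta \in (0,1)$ respectively), so that $K_n$ concentrates at $n/\mu_1$. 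Chebyshev then yields $m_d(n)/K_n \to p_d$ in probability.
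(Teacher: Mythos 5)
Your argument is correct in outline, but in the linear regime it takes a genuinely different route from the paper. The paper's proof works vertex-locally: it fixes a uniformly random vertex $j_0=\lceil U_0 k_n\rceil$, uses three maximal-inequality lemmas to show that $W_{j,m}$ is uniformly well approximated by $(j/m)^{1-1/\gamma}$ and that $\Psi_j$ is approximated by $\gvar_{1-\alpha}/(j(\mu-\alpha)-(\mu-1))$, and then couples the neighborhood of $j_0$ to a Poisson point process on $[0,1]$ with intensity \eqref{eq:out:intensity}; the degree of a random vertex is then Poisson with mean $\gvar_{1-\alpha}(U_0^{-1/\gamma}-1)$, and integrating out $U_0$ gives $p_d^{\gamma}$. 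You instead work globally with the master equation for $m_d(n)$, identify the limit from the stationary recursion $(\gamma+d-\alpha)p_d=\gamma\indicator[d=1]+(d-1-\alpha)p_{d-1}$ (which does telescope to \eqref{eq:degree:distn:linear}), and promote the mean identification to convergence in probability by Azuma--Hoeffding on the Doob martingale conditionally on $(T_k)$. Both routes are standard and sound; the sub-linear case and the density computation coincide with the paper's. What the paper's coupling buys, and your route does not, is the refined Poisson-mixture representation of \cref{corollary:rep:2}, which is extracted directly from \eqref{eq:poisson:mean}; what your route buys is a substantially more elementary argument for the theorem as stated. Two small remarks. First, your reading of the covariance-decay hypothesis is slightly off: in the paper it is used to control $\sup_{m}$-fluctuations of the partial sums $\sum_i (1-\alpha)/(T_i-1-(i-1)\alpha)$ in \cref{lemma:pp:1}, not $\text{Var}(K_n)$; on your route, since you condition on $(T_k)$ and the theorem already assumes $T_jj^{-1}\to\mu_1$ almost surely, $K_n$ is deterministic given $(T_k)$ and converges without any appeal to that hypothesis. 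Second, the step ``the conditional fluid-limit analysis identifies $\mathbb{E}[m_d(n)\mid (T_k)]/K_n\to p_d$'' hides the usual induction on $d$ with explicit control of the product of per-step multipliers; this is routine but is the part that would need to be written out in full.
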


The distributions \eqref{eq:degree:distn:sublinear}, \eqref{eq:degree:distn:linear} have the following representation, which is useful for generating realizations from those distributions.
\begin{corollary} \label{corollary:rep:1}
  Let $G$ be a random $(\alpha,T)$-graph for some $\alpha \in (-\infty,1)$ and $T\in\T$ satisfying the conditions of \cref{theorem:degree:distn}. Then the degree $D_n$ of a vertex sampled uniformly at random from $G_n$ converges in distribution to $D'$, where $D'$ is sampled as
  \begin{align*}
    & D' \sim \Geom(\bvar)
    \quad\text{ for }\quad
    \bvar \sim 
    \begin{cases}
      \BetaDist(\alpha,1 - \alpha) & \text{ if }  \sigma = \alpha \in (0,1) \\
      \BetaDist(\gamma,1-\alpha) & \text{ if } \sigma = 1
    \end{cases} \;.
  \end{align*}
\end{corollary}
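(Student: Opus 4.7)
My plan is to derive the corollary from \cref{theorem:degree:distn} by a direct calculation that identifies the two limiting pmfs $p^{\alpha}_d$ and $p^{\gamma}_d$ as beta-mixtures of geometrics. The theorem already provides that the degree $D_n$ of a uniformly sampled vertex has pmf converging to the appropriate $p^{\alpha}_d$ or $p^{\gamma}_d$; what remains is to recognise each of these as the law of $\Geom(\bvar)$ for the stated Beta distribution of $\bvar$.

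First I would fix the convention that $\Geom(p)$ is supported on $\{1,2,\ldots\}$ with mass $(1-p)^{d-1}p$; this is the correct convention here because every vertex in an $(\alpha,t)$-graph has degree at least one by construction (the edge whose endpoint creates the vertex contributes $1$). Then, for $\bvar \sim \BetaDist(a,b)$ and $D' \mid \bvar \sim \Geom(\bvar)$, I would compute by Fubini and the beta integral
\begin{equation*}
  \bbP(D' = d)
  \;=\;
  \frac{1}{B(a,b)}\int_0^1 x^{a}(1-x)^{d+b-2}\,dx
  \;=\;
  \frac{a\,\Gamma(a+b)\,\Gamma(d+b-1)}{\Gamma(b)\,\Gamma(a+b+d)}\;,
\end{equation*}
where the second equality uses only $\Gamma(a+1)=a\Gamma(a)$ and the standard form of the beta function.

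The remaining step is to specialise. Taking $(a,b) = (\alpha, 1-\alpha)$ collapses the formula to $\alpha\Gamma(d-\alpha)/(\Gamma(1-\alpha)\Gamma(d+1))$, which is exactly $p^{\alpha}_d$ in \eqref{eq:degree:distn:sublinear}. Taking $(a,b) = (\gamma, 1-\alpha)$ yields $\gamma\Gamma(1-\alpha+\gamma)\Gamma(d-\alpha)/(\Gamma(1-\alpha)\Gamma(d+1-\alpha+\gamma))$, matching $p^{\gamma}_d$ in \eqref{eq:degree:distn:linear}. Combined with the convergence in \cref{theorem:degree:distn}, this gives $D_n \to D'$ in distribution.

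There is no real obstacle here: the argument reduces to an elementary beta integral and a gamma-function cancellation. The only points that deserve care are (i) the support convention for the geometric, so that the conditional pmf is $(1-\bvar)^{d-1}\bvar$ rather than $(1-\bvar)^{d}\bvar$, and (ii) writing $p^{\gamma}_d$ in terms of the compound parameter $\gamma = (\mu_1 - \alpha)/(\mu_1 - 1)$, which is the form that makes the identification with $\BetaDist(\gamma, 1-\alpha)$ transparent.
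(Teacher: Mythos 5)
Your proposal is correct and takes essentially the same route as the paper: the paper disposes of this corollary with the single remark that it ``follows by checking moments'' of the limiting pmfs $p^{\alpha}_d$ and $p^{\gamma}_d$ established in the proof of \cref{theorem:degree:distn}, and your direct beta-integral computation of the $\Geom(\bvar)$ mixture pmf is an equivalent (and more explicit) elementary verification of the same identification. Your two points of care --- the support convention $\{1,2,\ldots\}$ for the geometric and the role of the compound parameter $\gamma$ --- are both consistent with the paper's conventions.
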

This representation can be refined further: The proof of \cref{theorem:degree:distn} shows, by extending techniques introduced by \citet*{Berger:etal:2014}, that 
the neighborhood of a random vertex can be coupled to a Poisson point process on the unit interval.
That yields the following representation:
\begin{corollary} \label{corollary:rep:2}
  Let $G$ be a random $(\alpha,T)$-graph for some $\alpha \in (-\infty,1)$ and $T\in\T$ satisfying the conditions of \cref{theorem:degree:distn}. Then the degree $D_n$ of a vertex sampled uniformly at random from $G_n$ converges in distribution to $D'$, where $D'$ is sampled as
  \begin{equation*}
      D' \sim \Poisson\left(\frac{1 - \bvar}{\bvar} \ \gvar_{1-\alpha}  \right)
      \quad\text{ for }\quad
      \bvar \sim 
      \begin{cases}
        \BetaDist(\alpha,1) & \text{ if } \sigma = \alpha \in (0,1) \\
        \BetaDist(\gamma,1) & \text{ if } \sigma = 1
      \end{cases} \;.
  \end{equation*}
\end{corollary}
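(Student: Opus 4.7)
The plan is to derive this representation directly from \cref{corollary:rep:1} using two classical ingredients: the mixed-Poisson representation of the geometric distribution, and the beta-gamma algebra \eqref{eq:beta:gamma:algebra}. This reduces the claim to a pair of elementary identities and bypasses a separate Poisson point process argument.

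First, I would invoke the identity
\begin{equation*}
  \Geom(p) \equdist \Poisson\bigl(\tfrac{1-p}{p}\,\gvar_1\bigr) \;,
\end{equation*}
where $\gvar_1 \sim \GammaDist(1,1)$ is independent of $p$; a one-line computation, integrating the $\Poisson(\lambda)$ p.m.f.\ against an exponential density with rate $p/(1-p)$, verifies this. Applied conditionally in \cref{corollary:rep:1}, with $\bvar \sim \BetaDist(a,1-\alpha)$ and $a = \alpha$ or $a = \gamma$ according to the regime, this yields
\begin{equation*}
  D' \equdist \Poisson\Bigl(\tfrac{1-\bvar}{\bvar}\,\gvar_1\Bigr) \;,
\end{equation*}
with $\bvar$ and $\gvar_1$ independent.

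Next, I would regroup the Poisson rate using the beta-gamma algebra. Realize $\bvar$ as $\gvar_a/(\gvar_a + \gvar_{1-\alpha})$ with $\gvar_a, \gvar_1, \gvar_{1-\alpha}$ mutually independent gammas of the indicated shapes; then $\tfrac{1-\bvar}{\bvar} = \gvar_{1-\alpha}/\gvar_a$, so
\begin{equation*}
  \tfrac{1-\bvar}{\bvar}\,\gvar_1 \;=\; \frac{\gvar_1\,\gvar_{1-\alpha}}{\gvar_a} \;=\; \tfrac{1-\bvar'}{\bvar'}\,\gvar_{1-\alpha} \;, \qquad \bvar' := \frac{\gvar_a}{\gvar_a + \gvar_1} \sim \BetaDist(a,1) \;.
\end{equation*}
The key point is that $\bvar'$, being a measurable function of $(\gvar_a,\gvar_1)$, is independent of $\gvar_{1-\alpha}$, so the right-hand side is a rate of exactly the form required by the corollary; substituting it back into the Poisson completes the argument.

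There is no substantive obstacle; the only thing to track is the independence structure across the two regroupings of the gammas, which is dictated by the explicit construction. As a sanity check, the Poisson rate $\gvar_1 \gvar_{1-\alpha}/\gvar_a$ has the same law under either representation, confirming internal consistency between \cref{corollary:rep:1} and \cref{corollary:rep:2}.
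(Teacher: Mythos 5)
Your proof is correct, but it takes a genuinely different route from the paper's. The paper obtains \cref{corollary:rep:2} not from \cref{corollary:rep:1} but directly from the Poisson point process coupling inside the proof of \cref{theorem:degree:distn}: there the limiting neighborhood of a uniformly chosen vertex $U_0$ is shown to be Poisson with conditional mean $\Lambda(\gvar_{1-\alpha},U_0)=\gvar_{1-\alpha}(U_0^{-1/\gamma}-1)$ as in \eqref{eq:poisson:mean}, and the corollary follows by noting that $U_0^{1/\gamma}\sim\BetaDist(\gamma,1)$ (with an analogous integral identity for $\sigma=\alpha<1$); in the paper's logic it is actually \cref{corollary:rep:1} that is then verified by checking moments. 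You reverse this: taking \cref{corollary:rep:1} as the input, you apply the exponential-mixture representation $\Geom(p)\equdist\Poisson(\tfrac{1-p}{p}\gvar_1)$ and then use the beta-gamma algebra \eqref{eq:beta:gamma:algebra} to trade the $\GammaDist(1,1)$ mixing variable for the $\GammaDist(1-\alpha,1)$ one, converting $\BetaDist(a,1-\alpha)$ into $\BetaDist(a,1)$ while preserving the law of the rate $\gvar_1\gvar_{1-\alpha}/\gvar_a$; the independence bookkeeping is handled correctly, and since a mixed Poisson law depends only on the law of its rate, the conclusion follows. Your argument is more elementary and self-contained given \cref{corollary:rep:1}, and it makes the equivalence of the two corollaries transparent as a pure beta-gamma identity; what it does not deliver is the structural content the paper's route provides, namely the coupling of the entire random neighborhood to a Poisson point process, which is the ``refinement'' the surrounding text advertises. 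One small caveat, inherited from the paper rather than introduced by you: the mixed-Poisson form of the geometric lives on $\{0,1,2,\dots\}$ while the degree distribution $p_d$ is supported on $d\geq 1$, so both your chain of identities and the stated corollary implicitly use the ``number of failures'' convention (equivalently, represent $D'-1$); this off-by-one is already present in the paper's own statements and does not affect the validity of your derivation.
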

\begin{remark}
  Based on the fact that $(1-\bvar)/\bvar$ is distributed as a so-called beta prime random variable, additional distributional identities may be deduced. To give one, let $\gvar_1$, $\gvar_{\alpha}$, and $\gvar_{\gamma}$ be independent Gamma random variables. Then one can replace $(1-\bvar)/\bvar$ above by $\gvar_1/\gvar_{\alpha}$ (for ${\sigma=\alpha < 1}$) or $\gvar_1/\gvar_{\gamma}$ (for ${\sigma=1}$).
\end{remark}

\subsection{A note on almost surely connected graphs}
\label{sec:connected:graphs}

Suppose one requires each graph in the evolving sequence $(G_n)$ drawn from an $(\alpha,T)$-graph to be
almost surely connected. That holds if and only if ${T\in\Teven}$, \ie if each arrival time after ${T_1=1}$ is even.
A simple way to generate $T\in\Teven$ is to sample $\Delta_2,\Delta_3,\dotsc$ as in the generation of general $T\in\T$, and to set
\begin{align} \label{eq:teven}
  T_2 = 2\Delta_2, \quad T_k = T_{k-1} + 2\Delta_k \quad\text{for}\quad k > 2 \;.
\end{align}
In the sub-linear regime, doubling the interarrival times 
does not affect the degree asymptotics. 
In the linear regime, the change has noticeable affect. For example, suppose the variables $\Delta_k$ above are drawn 
\iid from some probability distribution on $\bbN_+$ with mean $\mu$. 
Then by \cref{theorem:degree:distn}, the limiting degree distribution has power law exponent $\eta_{\text{\tiny\rm 2}} = 1 + \frac{2\mu - \alpha}{2\mu - 1}$. For ${T\not\in\Teven}$, the upper limit of $\eta$ is $\infty$, no matter the value 
of $\alpha$; for $T\in\Teven$, one has ${\eta_{\text{\tiny\rm 2}} < 3 - \alpha}$. Hence, if $\alpha>0$, then $\eta_{\text{\tiny\rm 2}} \in (2,3)$, implying that the limiting degree distribution has finite mean, but unbounded variance for any $\mu$.

\section{Examples}
\label{sec:examples}

\begin{figure}
 \makebox[\textwidth][c]{
  \resizebox{\textwidth}{!}{
    \begin{tikzpicture}
      \begin{scope}
        \begin{scope}[xshift=-5cm]
          \node (g11) {
            \includegraphics[width=4cm,angle=0]{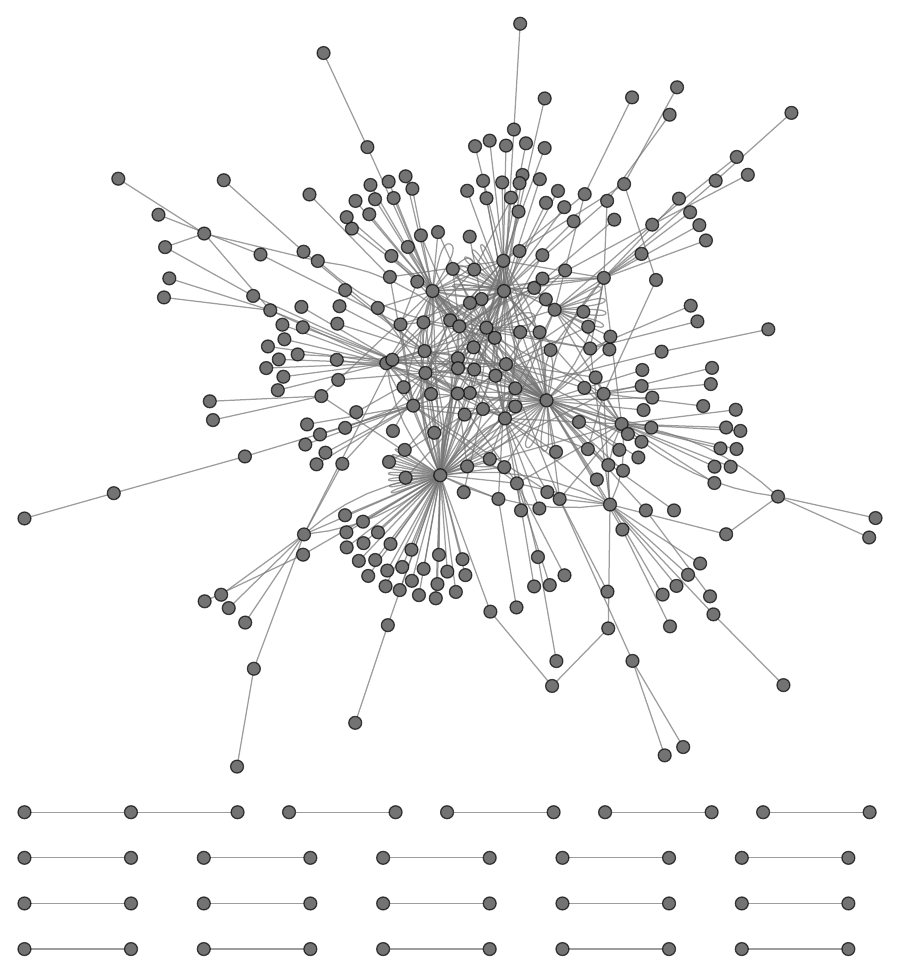}
          };
        \end{scope}
        \begin{scope}[xshift=-0.45cm]
          \node (g12) {
            \includegraphics[width=4.15cm,angle=0]{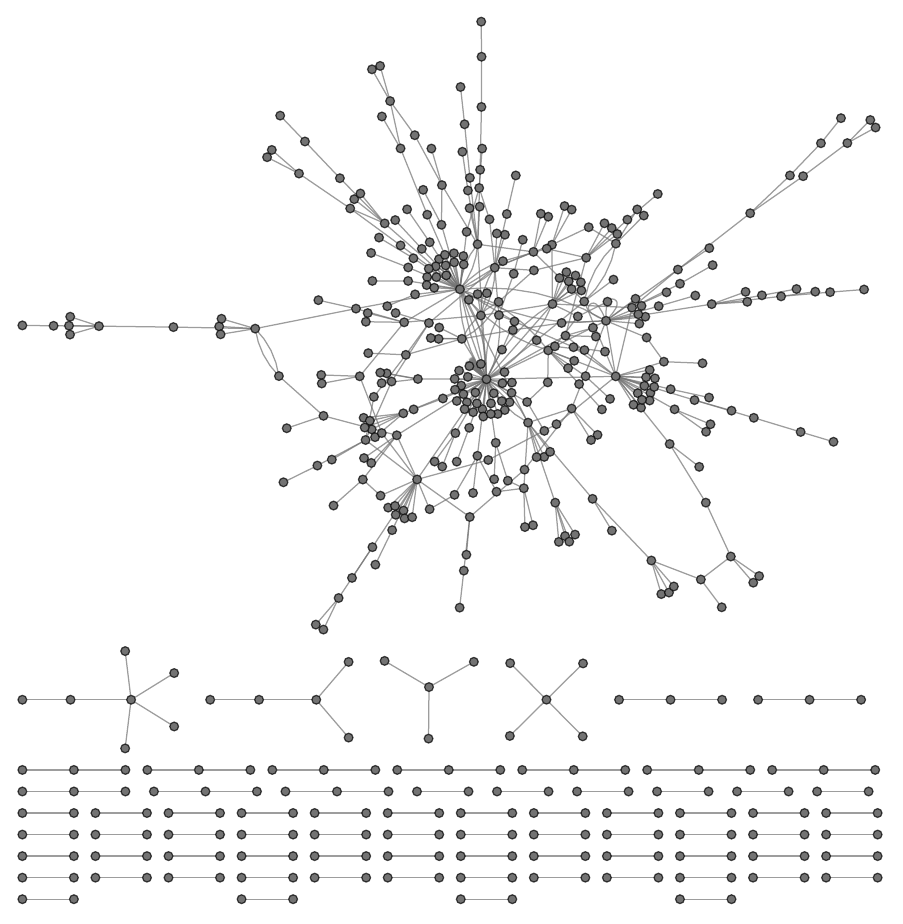}
          };
        \end{scope} 
        \begin{scope}[xshift=4cm]
          \node (g13) {
            \includegraphics[width=4cm,angle=0]{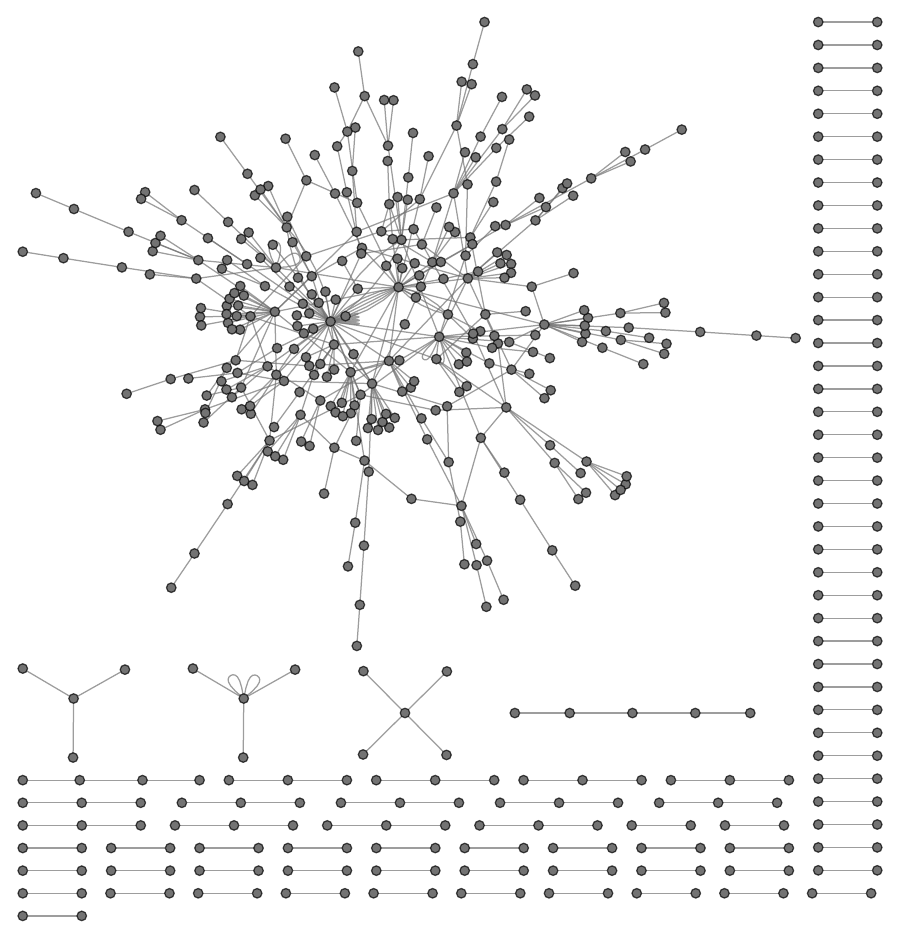}
          };
        \end{scope} 
      \end{scope}
    \end{tikzpicture}
}}
  \caption{Examples of ${(\alpha,T)}$-graphs generated using different arrival time distributions. Each graph has $500$ edges. Left: arrival times generated by $\CRP(\alpha,\theta)$, with $\alpha=0.1$, $\theta=5$. Middle: interarrival times are \iid $\Poisson_+(2)$, with $\alpha=0.1$. Right: interarrival times are \iid $\Geom(0.25)$, with $\alpha= 0.5$.}
  \label{fig:examples}
\end{figure}

We next discuss several subclasses of $(\alpha,T)$-graphs. One is obtained by fixing all interarrival
times to the same, constant value (\cref{sec:preferential:attachment:trees}). This includes the \Barabasi--Albert
random tree as a special case. Other subclasses can be obtained by imposing exchangeability assumptions.
One is that the vertex assignment variables $L_n$ form an exchangeable sequence, and hence that the
induced random partition ${\Phi^{-1}(G)}$ is exchangeable (\cref{sec:exchangeable:partitions}). This subclass
overlaps with the class of ``edge exchangeable'' graphs \citep{Crane:Dempsey:2016,Cai:etal:2016,Janson:2017aa}.
If the interarrival
times ${\Delta_k}$ are exchangeable (\cref{sec:examples:exch:interarrivals}), 
the induced partition is not exchangeable. This case
includes a version of the random graph model of \citep{Aiello:Chung:Lu:2001,Aiello:Chung:Lu:2002,Chung:Lu:2006}.

\subsection{\Barabasi--Albert trees and graphs with constant interarrival time}
\label{sec:preferential:attachment:trees}

  The basic \kword{preferential attachment} model popularized by \cite{Barabasi:Albert:1999}
  generates a random graph as follows: With parameter ${d\in\mathbb{N}_+}$, 
  start with any finite connected graph. 
  At each step, select $d$ vertices in the current graph independently from $P_0$ in \eqref{eq:p:alpha}, add a new
  vertex, and connect it to the $d$ selected vertices (multiple connections are allowed).
  
  The \Barabasi--Albert model can be expressed in terms of a sequence $(L_1,L_2,\ldots)$ with given arrival times
  as follows:
  Start, say, with a graph consisting
  of a single vertex and $d$ self-loops. Thus, ${t_k=1}$ and ${L_1=\ldots=L_{2d}=1}$. Each new
  vertex requires $2d$ stubs, so ${t_{k+1}=t_k+2d}$. At time ${t_{k+1}-1=(2d)k}$, 
  just before the ${(k+1)}$-st vertex arrives, the graph 
  ${G_{kd}=(L_n)_{n\leq 2kd}}$ has $k$ vertices and $kd$ edges.
  For ${t_k\leq n < t_{k+1}}$, we then set
  \begin{equation*}
    L_n=k \quad\text{ if } n \text{ odd }
    \qquad\text{ and }\qquad
    L_n\sim P_0(\argdot;G_{(k-1)d})\quad\text{ if } n \text{ even.}
  \end{equation*}
  The single vertex with self-loops is chosen as a seed graph here only to keep notation
  reasonable. More generally, any graph with $k$ vertices and $n$ edges can be encoded
  in the variables ${L_1,\ldots,L_{2n}}$ and the first $k$ arrival times.

  When $d=1$, the result is a tree (with a loop at the root). When $d\geq 2$, the above sampling scheme does not produce an $(\alpha,t)$-graph. 
  However, the following modified sampling scheme produces an $(\alpha,t)$-graph with $\Delta_j = 2d$ for all $j>1$. Start as before with a graph consisting of a single vertex and $d$ self-loops. When the $k$-th vertex arrives at time $t_k=2(k-1)d+1$, set $L_{t_k}=k$ and for $t_k + 1 \leq n < t_{k+1}$, set
  \begin{equation*}
    L_n\sim P_{\alpha}(\argdot;(L_i)_{i < n}) \;.
  \end{equation*}
  The modified sampling scheme differs from the basic preferential attachment model in that it updates the degrees after each step, allows loops, and does not require that each vertex begin with $d$ edges. Although the connectivity properties of the resulting graph may be substantially different from the \Barabasi--Albert model, the degree properties are similar to modifications that have been considered by \cite{Berger:etal:2014,Pekoz:Rollin:Ross:2017}. In this case, the results of \cref{sec:deg:dist:fixed} can be strengthened, showing that the scaled degrees converge to random variables that satisfy distributional relationships generalizing those of the beta-gamma algebra \eqref{eq:beta:gamma:algebra}, as discussed by \citet{Dufresne:2010}. (See also \cite{Janson:2010}.) These relationships emerge due to the behavior of $W_{j,k}$ as $k\to\infty$, which separates into two pieces: A deterministic scaling factor, and the random variables that appear below.
  \begin{proposition} \label{prop:pa:limits}
    Let $G$ be an $(\alpha,t)$-graph with $\alpha\in(-\infty,1)$ such that for some $d\in\bbN_+$, $t_j = 2d(j-1) + 1$ for all $j\geq 1$. Then for every $r\in\bbN_+$,
    \begin{align*}
      \biggl(\frac{n}{2m}\biggr)^{-\frac{2d-1}{2d-\alpha}} (\deg_j(n))_{1 \leq j \leq r} \xrightarrow[n\to\infty]{\text{\small a.s.}} (\xi_j)_{1 \leq j \leq r} \;.
    \end{align*}
    The vector of random variables $(\xi_j)_{1\leq j \leq r}$ satisfies the following distributional identities: 
    Denote $\bar{\alpha}:= 2d-\alpha$, and define
    \begin{align*}
      Z_r = \prod_{i=1}^{2d-1} \gvar^{1/\bar{\alpha}}_{r+1 - i/\bar{\alpha}} \quad \text{and} \quad Z'_r = \prod_{i=1}^{2d-1} \gvar^{1/\bar{\alpha}}_{1 - i/\bar{\alpha}}  \quad \text{and} \quad Z''_r = Z'_r \prod_{k=1}^{r-1} \bvar_{k\bar{\alpha},1-\alpha} \;,
    \end{align*}
    where all of the random variables are independent of each other and of $(\xi_j)_j$. 
    Then with $\Psi_1=1$, and ${(\Psi_j)_{j>1}\sim\BetaDist(1-\alpha,(j-1)(2d-\alpha))}$, the following distributional identities hold:
    \begin{gather} \label{eq:pa:identity}
      Z_r \cdot \bigl( \xi_j \bigr)_{1\leq j \leq r} 
      \equdist 
      \gvar_{r\bar{\alpha}} \cdot \bigl( \Psi_j \prod_{i=j+1}^r (1 - \Psi_i)  \bigr)_{1\leq j \leq r}
      \\
      Z'_r \cdot \bigl( \xi_j \bigr)_{1\leq j \leq r} 
      \equdist 
      \gvar_{r\bar{\alpha}} \prod_{k=1}^{r} \bvar_{k\bar{\alpha} - 2d + 1,2d-1} \cdot \bigl( \Psi_j \prod_{i=j+1}^r (1 - \Psi_i)  \bigr)_{1\leq j \leq r}
      \\
      Z'_r \cdot \bigl( \xi_j \bigr)_{1\leq j \leq r} 
      \equdist 
      \gvar_{r\bar{\alpha} - 2d + 1} \prod_{k=1}^{r-1} \bvar_{k\bar{\alpha} - 2d + 1,2d-1} \cdot \bigl( \Psi_j \prod_{i=j+1}^r (1 - \Psi_i)  \bigr)_{1\leq j \leq r}
      \\
      Z''_r \cdot \bigl( \xi_j \bigr)_{1\leq j \leq r} 
      \equdist 
      \gvar_{1-\alpha} \cdot \bigl( \Psi_j \prod_{i=j+1}^r (1 - \Psi_i)  \bigr)_{1\leq j \leq r} \;.
    \end{gather}
  \end{proposition}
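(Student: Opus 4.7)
The plan is to proceed in three stages: establish a.s.\ convergence of each scaled degree individually, identify the joint distribution of the limiting vector via the representation in \cref{theorem:sb}, and then derive the distributional identities by iterated application of the beta--gamma algebra \eqref{eq:beta:gamma:algebra}.

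For the convergence, I first observe that with $\Delta_j\equiv 2d$ the vertex count $k(n)$ is deterministic, so the attachment probability \eqref{eq:p:alpha} for a fixed vertex $j$ has a clean form. For $n>t_j$, the increments $\deg_j(n+1)-\deg_j(n)$ are Bernoulli with conditional parameter $(\deg_j(n)-\alpha)/(n-\alpha k(n))$ at non-arrival steps and deterministically $0$ at arrival steps, yielding a nearly multiplicative recursion. A standard Doob-martingale argument---choosing deterministic normalizers $a_n\sim n^{-(2d-1)/\bar{\alpha}}$ so that $a_n(\deg_j(n)-\alpha)$ is a positive $L^2$-bounded martingale---then gives a.s.\ (and $L^2$) convergence to $\xi_j$, strengthening the product convergence of \cref{thm:limiting:degree:sequence} to convergence of each individual factor. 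Joint a.s.\ convergence of $(\xi_j)_{j\leq r}$ is then automatic, since a finite intersection of a.s.\ events is a.s.

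To identify the joint distribution of $(\xi_j)_{j\leq r}$, I apply \cref{theorem:sb}: since $t_j-1-(j-1)\alpha=(j-1)\bar{\alpha}$, the independent beta variables in the representation satisfy $\Psi_j\sim\BetaDist(1-\alpha,(j-1)\bar{\alpha})$ for $j\geq 2$. Combining this representation with the a.s.\ convergence just established expresses $(\xi_j)_{j\leq r}$ as $(\Psi_j\prod_{i=j+1}^r(1-\Psi_i))_{j\leq r}$ multiplied by a single random prefactor, common in $j$, that encodes the (a.s.\ vanishing but, once normalized by $n^{-(2d-1)/\bar{\alpha}}$, stochastically non-trivial) tail $\prod_{i>r}(1-\Psi_i)$. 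Each of the four identities in \eqref{eq:pa:identity} then amounts to a specific distributional description of this common prefactor.

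Those descriptions follow by iterated application of \eqref{eq:beta:gamma:algebra} in the form $\bvar_{a,b}\gvar_{a+b}\equdist\gvar_a$ with independence on the left: this converts each $1-\Psi_i\sim\BetaDist((i-1)\bar{\alpha},1-\alpha)$ into a ratio of independent gammas, and the resulting chain collapses into a single $\gvar_{r\bar{\alpha}}$ once one compensates for the shape gap of size $2d-1$ between successive numerators and denominators. The product $Z_r=\prod_{i=1}^{2d-1}\gvar_{r+1-i/\bar{\alpha}}^{1/\bar{\alpha}}$ is precisely the prefactor that bridges that gap; the fractional exponent $1/\bar{\alpha}$ reflects the standard representation of a gamma variable as a power of a gamma with matched Mellin transform. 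The variants $Z'_r$ and $Z''_r$ are obtained from $Z_r$ by further applications of the beta--gamma algebra that split one of the gamma factors into a gamma--beta product. The main obstacle is that $\prod_{i>r}(1-\Psi_i)$ is a.s.\ zero in the linear regime by \cref{prop:tail:product:convergence}, so one cannot manipulate it as an infinite product directly; the identities must instead be verified at the level of the a.s.\ limits $\xi_j$, most cleanly by matching joint Mellin transforms, which reduces each identity to an equality among products of gamma functions that can be checked by direct calculation.
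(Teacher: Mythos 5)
Your proposal is correct and follows essentially the same route as the paper: martingale arguments give almost-sure convergence of the scaled degrees, and the distributional identities are then established by matching joint Mellin transforms, which reduce via the beta--gamma algebra to identities among products of gamma functions. The only real difference is bookkeeping: the paper packages the mixed moments directly into the single product martingale $Z_n(p,t)$ of \cref{lemma:degree:sequence:martingale}, whose constant expectation yields $\mathbb{E}[\xi_1^{p_1}\dotsm\xi_r^{p_r}]$ in closed form, whereas you obtain the joint structure (a common independent prefactor times the stick-breaking products) from \cref{theorem:sb} and individual-degree martingales, and leave the mixed-moment computation as a ``direct calculation''---a step that, to be made rigorous for general exponents, is most cleanly supplied by exactly that product martingale.
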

  \begin{remark}
    For a gamma random variable $\gvar_a$, $\gvar_a^{b}$ has so-called generalized gamma distribution, denoted $\GGdist(a/b,1/b)$. Hence, $Z_r$ above is equal to the product of $\GGdist((r+1)\bar{\alpha}-i,\bar{\alpha})$ random variables, and similarly for $Z'_r$. Generalized gamma random variables also appear in the limits of the preferential attachment models in \cite{Pekoz:Rollin:Ross:2017,Pitman:Racz:2015}, and arise in a range of other applications \cite{Pekoz:Rollin:Ross:2016}.
  \end{remark}

  Results on power law degree distributions in preferential attachment models are numerous \citep[\egc][]{Barabasi:Albert:1999,Aiello:Chung:Lu:2001,Aiello:Chung:Lu:2002,Berger:etal:2014}. It is well-known that the degree distribution of the \Barabasi--Albert tree exhibits a power law with exponent $\eta=3$ \cite{Barabasi:Albert:1999,Bollobas:etal:2001}, which agrees with the following implication of \cref{theorem:degree:distn}.
  \begin{corollary}
    For the constant interarrival time model considered above, the degree distribution converges to \eqref{eq:degree:distn:linear} with $\gamma = (2d-\alpha)/(2d - 1)$. In particular, the $\alpha$-weighted \Barabasi--Albert tree has power law exponent $\eta = 3 - \alpha$.
  \end{corollary}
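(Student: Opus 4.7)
The plan is to derive both claims as a direct application of \cref{theorem:degree:distn} to the constant interarrival time setup, so the only real work is verifying the hypotheses of that theorem and then substituting parameters. First I would observe that the model fixes $T_1 = 1$ and $\Delta_j = 2d$ for all $j > 1$, hence $T_j = 2d(j-1) + 1$ for every $j \geq 1$. Consequently, $T_j / j \to 2d$ as $j \to \infty$, so the hypothesis of \cref{theorem:degree:distn} is met with $\sigma = 1$ and $\mu_1 = 2d$. Since the $\Delta_j$ are deterministic, $\operatorname{Var}(\Delta_j) = 0$ and $\operatorname{Cov}(\Delta_i, \Delta_j) = 0$ trivially, so the variance and covariance decay conditions are satisfied with any constants.

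With these hypotheses in hand, the linear-regime conclusion of \cref{theorem:degree:distn} applies: the empirical degree distribution of $G_n$ converges in probability to $p_d^{\gamma}$ as in \eqref{eq:degree:distn:linear}, with
\[
  \gamma \;=\; \frac{\mu_1 - \alpha}{\mu_1 - 1} \;=\; \frac{2d - \alpha}{2d - 1} \;,
\]
which is the first assertion. Reading off the power law exponent from \eqref{eq:degree:distn:linear} gives $\eta = 1 + \gamma$. Specializing to the $\alpha$-weighted \Barabasi--Albert tree corresponds to $d = 1$, yielding $\gamma = (2 - \alpha)/(2 - 1) = 2 - \alpha$, hence $\eta = 3 - \alpha$, which recovers the classical \Barabasi--Albert exponent $\eta = 3$ at $\alpha = 0$.

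Since all the heavy lifting (convergence of $m_d(n)/|\mathbf{V}(G_n)|$, identification of the limit, and the Tauberian power-law tail) is already carried out in \cref{theorem:degree:distn}, there is no real obstacle here; the only point that requires a line of comment is that the deterministic interarrival times satisfy all regularity assumptions vacuously, so the corollary is essentially a substitution.
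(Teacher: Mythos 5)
Your proposal is correct and matches the paper's (implicit) argument exactly: the corollary is stated as a direct implication of \cref{theorem:degree:distn}, and your verification that the deterministic interarrivals $\Delta_j = 2d$ give $\mu_1 = 2d$, $\sigma = 1$, and vacuously satisfy the variance/covariance conditions, followed by the substitution $\gamma = (2d-\alpha)/(2d-1)$ and $\eta = 1+\gamma = 3-\alpha$ for $d=1$, is precisely what is intended.
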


\subsection{Graphs with exchangeable vertex assignments}
\label{sec:exchangeable:partitions}

Suppose $G$ is a random graph such that the random partition ${\Pi=\Phi^{-1}(G)}$ is exchangeable
(see \cite{Pitman:2006} for more on exchangeable partitions). Equivalently, 
the vertex assignments ${L_1,L_2,\ldots}$ are exchangeable, and there is hence
a random probability measure $\mu$ on $\mathbb{N}$ such that
\begin{equation} \label{eq:cond:iid}
  L_1,L_2,\ldots\mid\mu\simiid\mu\;.
\end{equation}
We first note an implication of \Cref{theorem:degree:distn}. Recall that a random graph is sparse
if its density \eqref{epsilon:density} is ${\sparsity<2}$.
\begin{corollary}
   If a graph generated by an exchangeable partition is sparse
   and has a power law degree distribution, then ${\sigma > 1/2}$, and hence ${\eta\in (3/2,2)}$. 
\end{corollary}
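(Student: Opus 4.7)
The plan combines the characterization of exchangeable $(\alpha,T)$-urns as Gibbs partitions with the sparsity--regime dichotomy recorded in \cref{theorem:degree:distn}.

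First, I would use exchangeability of $\Pi := \Phi^{-1}(G)$: since $\Pi$ is an exchangeable random partition arising from an $(\alpha,T)$-urn, the discussion in \cref{sec:graphs:urns} identifies it as an exchangeable Gibbs partition with some parameter $\alpha \in (-\infty,1)$. The asymptotic block count $K_n$ of such partitions is classical (Pitman; Gnedin--Pitman): it is bounded a.s.\ for $\alpha<0$, grows like $\log n$ for $\alpha = 0$, and satisfies $K_n/n^{\alpha}$ converging a.s.\ to a strictly positive random variable for $\alpha \in (0,1)$. Under $\Phi$, the vertex count $|\mathbf{V}(G_n)|$ equals $K_{2n}$, so \eqref{eq:vertex:arrival:rate} can hold only with $\sigma = \alpha \in (0,1)$. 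In particular the linear regime $\sigma = 1$ is excluded: by de Finetti the exchangeable sequence $(L_n)$ is conditionally iid from a random directing measure $\mu$, which in the Gibbs case is of Poisson--Dirichlet type, hence purely atomic; this forces $K_n/n \to 0$.

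Second, I would plug in the sparsity assumption. The $\varepsilon$-density in \eqref{epsilon:density} with \eqref{eq:vertex:arrival:rate} gives $\varepsilon = 1/\sigma$, so sparsity ($\varepsilon < 2$) is equivalent to $\sigma > 1/2$. Combined with the previous step, $\alpha = \sigma > 1/2$. Finally, \cref{theorem:degree:distn} applied to the sub-linear regime with parameter $\alpha \in (1/2,1)$ yields a power law with exponent $\eta = 1+\alpha \in (3/2, 2)$, proving both conclusions.

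The only real obstacle is citing cleanly the growth rates of $K_n$ for exchangeable Gibbs partitions and justifying exclusion of $\sigma = 1$ for exchangeable $(L_n)$; both follow from standard Poisson--Dirichlet facts and are essentially bookkeeping on top of \cref{theorem:degree:distn}.
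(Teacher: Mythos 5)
Your argument is correct and is essentially the reasoning the paper intends (the corollary is stated without an explicit proof, as a direct consequence of \cref{theorem:degree:distn} together with the classification of exchangeable $(\alpha,T)$-urns as Gibbs partitions): exchangeability forces the Gibbs form, whose block counts rule out the linear regime and pin $\sigma=\alpha\in(0,1)$ whenever a power law is possible, after which $\sparsity=1/\sigma<2$ gives $\alpha>1/2$ and $\eta=1+\alpha\in(3/2,2)$. The two points you flag as needing citations (the $n^{\alpha}$/$\log n$/bounded growth trichotomy for Gibbs partitions and the absence of dust) are exactly the standard facts from \citet{Pitman:2006} and \citet{Gnedin:Pitman:2006} that the paper relies on, so no gap remains.
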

Recall from \cref{sec:graphs:urns} that an exchangeable Gibbs partition is an exchangeable random partition $\Pi$ such that the probability of any finite restriction $\Pi_n$ can be written as
\begin{align*} 
  \mathbb{P}(\Pi_n = \{ \deg_1(n),\dots,\deg_k(n) \}) = 
    V_{n,k} \prod_{j=1}^k \frac{\Gamma(\deg_j(n) - \alpha)}{\Gamma(1 - \alpha)} \;,
\end{align*}
for a suitable sequence of weights $V_{n,k}$. 
\citet{Griffiths:Spano:2007} studied the block arrival times (called, in that context, the record indices) of exchangeable Gibbs partitions.
For the random graph induced by such partitions, their results show that 
\begin{align} \label{eq:degree:proportions}
  \frac{1}{n} \deg_j(n) \xrightarrow[n\to\infty]{\text{\small a.s.}} P_j \equdist \Psi_j \prod_{i = j+1}^{\infty} (1 - \Psi_i) \;,
\end{align}
where $\Psi_j$ is distributed as in \eqref{eq:sb:1}. (This result is also contained in \cref{thm:limiting:degree:sequence}.) They prove that an exchangeable random partition is of Gibbs form if and only if the sequence $(P_j)_{j\geq 1}$ is NTL conditioned on $(T_j)_{j\geq 1}$; this result has implications for some recent network models.

\citet{Crane:Dempsey:2016} and \citet*{Cai:etal:2016} 
call a random graph ${((L_1,L_2),\ldots)}$ \emph{edge exchangeable} 
if there is some random probability measure $\nu$ on ${\mathbb{N}^2}$ such that
\begin{equation*}
  (L_1,L_2),(L_3,L_4),\ldots\mid\nu\simiid\nu\;.
\end{equation*}
\citet{Janson:2017aa} refers to such a graph as being \emph{rank one} if ${\nu=\mu\otimes\mu}$ for some random probability measure on $\mathbb{N}$, which is just
\eqref{eq:cond:iid}. Thus, rank one edge exchangeable graphs are precisely those corresponding to
exchangeable random partitions via $\Phi$. The intersection of edge exchangeable and $(\alpha,T)$-graphs are precisely those $(\alpha,T)$-graphs that 
have exchangeable vertex assignments, in which case $\Pi$ is
an exchangeable Gibbs partition. That includes the case ${\Pi\sim\CRP(\alpha,\theta)}$ above, for which \cite{Crane:Dempsey:2016} call $G=\Phi(\Pi)$ the \emph{Hollywood model}. 
\begin{proposition}
  Let $G$ be a rank one edge exchangeable graph, and let $\mathbf{D}_{\infty}$ be the limiting degree proportions $n^{-1}(\deg_1(n),\deg_2(n),\dotsc)$. Then $\mathbf{D}_{\infty}$ is NTL if and only if $G$ is distributed as a $(\alpha,T)$-graph, where $T$ distributed as in \eqref{eq:egp:arrivals}, in which case $\mathbf{D}_{\infty}$ is distributed as in \eqref{eq:degree:proportions}.
\end{proposition}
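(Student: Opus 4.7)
The plan is to reduce the statement to the characterization of exchangeable Gibbs partitions established by \citet{Griffiths:Spano:2007} via the bijection $\Phi$ between partitions and graphs. First, since $G$ is rank one edge exchangeable, the vertex labels $(L_n)_{n\geq 1}$ are conditionally i.i.d.\ given a random probability measure $\mu$ on $\mathbb{N}$ in the sense of \eqref{eq:cond:iid}, so the induced random partition $\Pi=\Phi^{-1}(G)$ is exchangeable.

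Next, I would translate degree proportions into block frequencies. By the definition of $\Phi$, the degree of vertex $j$ in $G_n$ is exactly the size $|B_j(2n)|$ of the $j$th block of $\Pi$ restricted to $\{1,\dotsc,2n\}$, so that
\begin{equation*}
  n^{-1}\deg_j(n) \;=\; 2\cdot (2n)^{-1}|B_j(2n)| \;.
\end{equation*}
By Kingman's theorem, $(2n)^{-1}|B_j(2n)|$ converges almost surely to a limiting frequency $P_j$, and the sequence $\mathbf{D}_{\infty}$ equals $2\mathbf{P}$. Since neutrality-to-the-left is preserved under multiplication by a positive constant, $\mathbf{D}_{\infty}$ is NTL if and only if $\mathbf{P}=(P_j)_{j\geq 1}$ is NTL.

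Now apply Griffiths--Span\`o: for an exchangeable partition, the sequence of limiting block frequencies is NTL conditional on the record indices (i.e.\ arrival times) if and only if $\Pi$ is of Gibbs type, in which case $P_j \equdist \Psi_j\prod_{i>j}(1-\Psi_i)$ with $\Psi_j\sim\BetaDist(1-\alpha,T_j-1-(j-1)\alpha)$ independent, matching \eqref{eq:degree:proportions}. Combining the two equivalences shows $\mathbf{D}_{\infty}$ is NTL if and only if $\Pi$ is an exchangeable Gibbs partition.

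Finally, to recognize exchangeable Gibbs partitions as $(\alpha,T)$-urns, invoke the discussion of \cref{sec:graphs:urns}: the EPPF \eqref{eq:egp:eppf} together with the recursion \eqref{eq:egp:recursion} forces the arrival time distribution to be exactly \eqref{eq:egp:arrivals}, and conversely any $(\alpha,T)$-urn whose arrival times are marginally distributed according to \eqref{eq:egp:arrivals} recovers \eqref{eq:egp:eppf} after integrating out $T$ (as made precise by the proposition preceding \cref{corollary:G:S}). Applying $\Phi$ transfers this equivalence from urns to graphs: $\Pi$ is an exchangeable Gibbs partition iff $G=\Phi(\Pi)$ is an $(\alpha,T)$-graph with $T$ distributed as in \eqref{eq:egp:arrivals}. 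The main subtlety is to check that the conditional NTL property used by Griffiths--Span\`o agrees with the unconditional NTL property in the statement; this follows because the $\Psi_j$ in \eqref{eq:sb:1} are conditionally independent given $T$, and independence of the relative increments is preserved under mixing over $T$ since the conditional laws determine the joint law. All other steps are direct applications of results already stated in the paper.
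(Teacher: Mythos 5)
Your overall route is the same as the paper's: the paper states this proposition immediately after quoting the Griffiths--Span\`o characterization and offers no separate proof, so the intended argument is exactly the reduction you carry out --- identify ${\Pi=\Phi^{-1}(G)}$ as an exchangeable partition via \eqref{eq:cond:iid}, translate ${\deg_j(n)=|B_j(2n)|}$ so that degree proportions are (twice) the limiting block frequencies, invoke the Griffiths--Span\`o equivalence ``NTL $\Leftrightarrow$ Gibbs type,'' and then use the correspondence between exchangeable Gibbs partitions and $(\alpha,T)$-urns with $T$ governed by \eqref{eq:egp:arrivals} established in \cref{sec:graphs:urns}. All of that is fine.

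The one step that does not hold up is your resolution of the ``main subtlety.'' You assert that the independence of the relative increments is preserved under mixing over $T$ ``since the conditional laws determine the joint law.'' That is not a valid inference: conditional independence given $T$ does not imply unconditional independence after $T$ is integrated out, and in general the increments ${P_j/\sum_{i\leq j}P_i\equdist\Psi_j}$ with ${\Psi_j\sim\BetaDist(1-\alpha,T_j-1-(j-1)\alpha)}$ are \emph{not} unconditionally independent once the $T_j$ (which are themselves dependent) are randomized --- already for ${\CRP(\alpha,\theta)}$ the unconditional $\Psi_j$ are mixtures coupled through the arrival times. The correct resolution is that the NTL property in the proposition, exactly as in Griffiths--Span\`o and as in the paper's earlier corollary for an $(\alpha,t)$-graph with fixed $t$, is to be read conditionally on the arrival times $(T_j)_{j\geq 1}$; with that reading no mixing argument is needed and the equivalence is precisely the cited theorem. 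As written, your unconditional claim would make the ``only if'' direction fail, so you should replace that sentence rather than try to repair the mixing argument.
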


The results of \cref{sec:deg:dist:fixed} specialize for $G=\Phi(\Pi)$ where $\Pi$ has law $\CRP(\alpha,\theta)$. In particular, consider conditioning on the first $r$ arrival times, rather than all arrival times. As \cref{prop:crp:limits} shows, the scaled degrees have the same basic structure as in \cref{sec:deg:dist:fixed}, but $\bbE[W_{r,\infty}]$ is captured by a single beta random variable.

\begin{proposition} \label{prop:crp:limits}
  Let $G$ be an $(\alpha,T)$-graph for fixed $\alpha\in(0,1)$ such that $T$ are the arrival times induced by a $\CRP(\alpha,\theta)$ partition process \eqref{eq:crp:arrivals}. Then for every $r\in \bbN_+$, conditioned on $T_1,\dotsc,T_r$,
  \begin{align*}
    n^{-1} (\deg_j(n))_{1\leq j \leq r} \mid T_1,\dotsc,T_r \xrightarrow[n\to\infty]{\text{\small a.s.}} (\xi_1,\dotsc,\xi_r) \;,
  \end{align*}
  where
  \begin{align}
    \bigl(\xi_j \bigr)_{1\leq j \leq r} \equdist  \bvar_{T_r - r\alpha,\theta + r\alpha} \cdot \bigl(\Psi_j\prod_{i=j+1}^r (1 - \Psi_i)\bigr)_{1\leq j \leq r} \;,
  \end{align}
  with $\Psi_1=1$, $\Psi_j \sim \BetaDist(1-\alpha,T_j - 1 - (j-1)\alpha)$ and $\bvar_{T_r - r\alpha, \theta + r\alpha}$ mutually independent random variables for $j \geq 1$. This implies the joint distributional identity
\begin{align}
  \gvar_{T_r  + \theta } \cdot \bigl(\xi_j \bigr)_{1\leq j \leq r} \equdist  \gvar_{T_r - r\alpha} \cdot \bigl( \Psi_j\prod_{i=j+1}^r (1 - \Psi_i)\bigr)_{1\leq j \leq r} \;.
\end{align}
and the marginal identities for all $j>1$
\begin{gather*}
  \xi_j \equdist \bvar_{1-\alpha,T_j - 1 + \theta + \alpha} \\
  \xi_{j+1} \mid \xi_j, \Delta_{j+1} \equdist \xi_j \bvar_{T_j + \theta, \Delta_{j+1}} \;.
\end{gather*}
\end{proposition}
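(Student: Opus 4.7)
The plan is to combine the almost-sure convergence from \eqref{eq:limit:seq:full} with a computation of the conditional law of the tail product $W_{r,\infty}=\prod_{i>r}(1-\Psi_i)$ given $T_1,\ldots,T_r$. For $\CRP(\alpha,\theta)$ with $\alpha\in(0,1)$ the arrival times satisfy $T_j/j^{1/\alpha}\to\text{const.}$ almost surely, placing the graph in the sub-linear regime with $\sigma=\alpha$, so \eqref{eq:limit:seq:full} gives $n^{-1}\deg_j(n)\to\xi_j$ almost surely for each $j$. For $j\leq r$ I would then factor $\xi_j=\Psi_j\prod_{i=j+1}^r(1-\Psi_i)\cdot W_{r,\infty}$. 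By \cref{theorem:sb}, conditionally on the entire sequence $T$ the variables $\Psi_1,\dotsc,\Psi_r$ are independent of $W_{r,\infty}$; and by \eqref{eq:crp:arrivals}, the $\CRP$ arrival times form a Markov chain whose transitions depend only on the current arrival time, so the conditional law of $W_{r,\infty}$ given $T_1,\dotsc,T_r$ depends on $T_r$ alone, and the joint conditional independence of $W_{r,\infty}$ and $(\Psi_1,\dotsc,\Psi_r)$ given $T_1,\dotsc,T_r$ follows.

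The main obstacle is identifying the conditional law of $W_{r,\infty}$ given $T_r$. I would use that $W_{r,\infty}=\sum_{j=1}^r\xi_j$ is the total limiting proportion of the first $r$ blocks of the $\CRP(\alpha,\theta)$, and invoke the standard Dirichlet characterization of the two-parameter Poisson--Dirichlet process: conditional on a partition configuration with block sizes $(n_1,\dotsc,n_r)$ at time $n=\sum_i n_i$, the limiting block proportions together with the total residual mass form a $\text{Dirichlet}(n_1-\alpha,\dotsc,n_r-\alpha,\theta+r\alpha)$ vector, independently of the future block proportions after normalization. Summing the first $r$ coordinates at time $T_r$ yields $W_{r,\infty}\mid T_r\sim\BetaDist(T_r-r\alpha,\theta+r\alpha)$, which depends only on $T_r$ and not on the individual block sizes. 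Combined with the factorization above, this gives the first distributional identity.

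The joint gamma identity follows by multiplying both sides by an independent $\gvar_{T_r+\theta}$ and invoking the beta--gamma algebra \eqref{eq:beta:gamma:algebra}, which yields $\bvar_{T_r-r\alpha,\theta+r\alpha}\cdot\gvar_{T_r+\theta}\equdist\gvar_{T_r-r\alpha}$ with the gamma on the right independent of the beta on the left. For the marginal law of $\xi_j$, specializing to $r=j$ gives $\xi_j\equdist\Psi_j\cdot\bvar_{T_j-j\alpha,\theta+j\alpha}$ with independent factors; the beta-product identity $\bvar_{a,b}\bvar_{a+b,c}\equdist\bvar_{a,b+c}$ (used already in the discussion following \cref{theorem:sb}) with $a=1-\alpha$, $b=T_j-1-(j-1)\alpha$, and $c=\theta+j\alpha$ then collapses this to $\bvar_{1-\alpha,T_j-1+\theta+\alpha}$. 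The conditional identity $\xi_{j+1}\mid\xi_j,\Delta_{j+1}\equdist\xi_j\cdot\bvar_{T_j+\theta,\Delta_{j+1}}$ follows by writing $T_{j+1}-1+\theta+\alpha=(T_j-1+\theta+\alpha)+\Delta_{j+1}$ and applying the same beta-product identity in reverse to decompose the marginal $\bvar_{1-\alpha,T_{j+1}-1+\theta+\alpha}$.
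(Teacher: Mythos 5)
Your proposal is correct, but it takes a genuinely different route from the paper. The paper's proof constructs a CRP-specific martingale ${Z_n^{\alpha,\theta}(p,T_{1:r}) = \tfrac{\Gamma(n+\theta)}{\Gamma(n+\theta+\bar p)}\prod_{j\le r}\tfrac{\Gamma(\deg_j(n)-\alpha+p_j)}{\Gamma(\deg_j(n)-\alpha)}}$ (exploiting the fact that $n+\theta$ is the natural P\'olya-urn total weight for the CRP), bounds it in $L_2$, and identifies the limit by telescoping the mixed moments $\bbE[\xi_1^{p_1}\dotsm\xi_r^{p_r}\mid T_{1:r}]$ into $\bbE[\bvar_{T_r-r\alpha,\theta+r\alpha}^{\bar p}]\prod_j\bbE[\Psi_j^{p_j}(1-\Psi_j)^{\bar p_{j-1}}]$; the distributional identities are then read off by moment matching. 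You instead reuse the general almost-sure convergence already established in \cref{thm:limiting:degree:sequence}, factor the limit as ${\xi_j=\Psi_j\prod_{i=j+1}^r(1-\Psi_i)\cdot W_{r,\infty}}$, and identify the conditional law $W_{r,\infty}\mid T_{1:r}\sim\BetaDist(T_r-r\alpha,\theta+r\alpha)$ by collapsing the CRP prediction rule to a two-color P\'olya urn with weights $(T_r-r\alpha,\theta+r\alpha)$; your conditional-independence argument (given the full sequence $T$ the $\Psi_i$ are mutually independent, and the CRP arrival chain is Markov) is sound, and your beta--gamma manipulations for the remaining identities are exactly the ones the paper would use. What the paper's route buys is self-containedness and the mixed-moment identity directly, with the $L_2$ bound supplying the uniform integrability needed to equate moments of the limit; what your route buys is a moment-free, more conceptual identification of the scaling variable as the residual mass of the first $r$ blocks, at the cost of importing the classical Dirichlet/residual-mass fact for the two-parameter CRP. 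One small caveat: your last two marginal identities are stated at the same level of informality as the paper's (the conditional identity for $\xi_{j+1}\mid\xi_j,\Delta_{j+1}$ really requires the joint structure of $(\xi_j,\xi_{j+1})$, not just a marginal beta-product decomposition), but the paper's own proof is no more explicit on this point.
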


The random variable $\xi_j$ is independent of $j$, given $T_j$. 
Among all $(\alpha,T)$-graphs derived from exchangeable Gibbs partitions, this property characterizes those derived from $\CRP(\alpha,\theta)$ partitions, and stems from the arrival time distribution (see \cref{sec:arrival:classification}).
\begin{proposition} \label{prop:crp:marginal}
  Let $G$ be an $(\alpha,T)$-graph such that $T$ are the arrival times induced by an exchangeable Gibbs partition $\Pi$. For any $j\geq 1$, the marginal distribution of $\xi_j$ conditioned on $j$th arrival time depends only on $T_j$ if and only if $\Pi$ has law $\CRP(\alpha,\theta)$.
\end{proposition}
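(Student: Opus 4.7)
The forward direction is immediate from \cref{prop:crp:limits}: under $\CRP(\alpha,\theta)$, $\xi_j \mid T_j \sim \BetaDist(1-\alpha,\, T_j - 1 + \theta + \alpha)$, a distribution determined by $T_j$ and the fixed parameters $\alpha, \theta$ alone.

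For the converse, suppose $\mu(s,t) := \bbE[\xi_j^s \mid T_j = t]$ does not depend on $j$ for any admissible $(s,t,j)$. My plan is to decompose $\xi_j = \Psi_j\, W_{j,\infty}$ as in \cref{corollary:G:S}, with $W_{j,\infty} = \prod_{i > j}(1-\Psi_i)$. Conditionally on $T_j$ alone, $\Psi_j$ and $W_{j,\infty}$ are independent: the former is $\BetaDist(1-\alpha,\, T_j - 1 - (j-1)\alpha)$ while the latter is generated by $T_{j+1}, T_{j+2}, \dotsc$ and the associated $\Psi_i$. Combining the hypothesis on $\mu$ with the explicit Beta moments of $\Psi_j \mid T_j$ pins down
\begin{equation*}
  \bbE\bigl[W_{j,\infty}^s \mid T_j = t\bigr] \;=\; \mu(s,t)\,\frac{\Gamma(1-\alpha)\,\Gamma(t-j\alpha+s)}{\Gamma(1-\alpha+s)\,\Gamma(t-j\alpha)} \, ,
\end{equation*}
a formula forced by the hypothesis that carries a $j$-dependence through $t - j\alpha$.

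Next I would feed this into the recursion $W_{j,\infty} = (1-\Psi_{j+1})\, W_{j+1,\infty}$, averaging first over $\Psi_{j+1} \mid T_{j+1}$ and then over $T_{j+1} \mid T_j$ using the Gibbs transition \eqref{eq:egp:arrivals}. After the gamma factors from $(1-\Psi_{j+1})$ and from the transition telescope, this yields the functional identity
\begin{equation*}
  \mu(s,t)\, V_{t,j}\,\Gamma(t - j\alpha + s) \;=\; \sum_{\delta = 1}^{\infty} V_{t+\delta,\, j+1}\,\Gamma(t+\delta - 1 - j\alpha + s)\,\mu(s, t+\delta) \, ,
\end{equation*}
which must hold for every admissible $(s, t, j)$. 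Varying $j$ at fixed $(s, t)$ (where $\mu$ remains the same by hypothesis) imposes constraints on the Gibbs weights; the key intermediate goal is to deduce that the ratio $V_{n, k+1}/V_{n, k}$ depends on $k$ but not on $n$.

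Once this independence-of-$n$ is established, the Gibbs recursion \eqref{eq:egp:recursion} finishes the proof: setting $c_k := V_{n,k+1}/V_{n,k}$, substitution into \eqref{eq:egp:recursion} gives $V_{n+1, k}/V_{n, k} = 1/(n - \alpha k + c_k)$, and comparing two expressions for $V_{n+1, k+1}/V_{n, k+1}$ forces $c_{k+1} - c_k = \alpha$. Setting $\theta := c_1 - \alpha$ then recovers $c_k = \theta + k\alpha$ and the CRP weights $V_{n,k} = \prod_{i=1}^{k-1}(\theta + i\alpha)/(\theta + 1)_{n-1}$, so $\Pi \sim \CRP(\alpha,\theta)$. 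I expect the main obstacle to be this second step---extracting from the functional identity that $V_{n,k+1}/V_{n,k}$ is independent of $n$. The cleanest route I plan to pursue is specializing to $s = 1$, reducing to the first-moment identity $\mu(1,t)(t-j\alpha) = \bbE[\mu(1, T_{j+1})(T_{j+1} - 1 - j\alpha) \mid T_j = t]$, and iterating it against \eqref{eq:egp:recursion} to produce a difference equation in $n$ for $V_{n, k+1}/V_{n, k}$; failing this direct route, the Gnedin--Pitman classification of exchangeable Gibbs partitions and the parallel arrival-time classification in \cref{sec:arrival:classification} provide an alternative path to the same conclusion.
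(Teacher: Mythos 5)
Your forward direction is fine and coincides with the paper's (it is immediate from \cref{prop:crp:limits}). For the converse you take a genuinely different route: the paper constructs a martingale for a general exchangeable Gibbs partition, reads off the conditional moments of $\xi_j$ given $T_j$ as an explicit function of $p$, $\alpha$, and the \emph{single scalar} $V_{T_j+1,j}/V_{T_j,j}$, and then invokes Kerov's theorem that the only exchangeable Gibbs partitions whose coefficients factor as $V_{n,k}=v_k/c_n$ are the $\CRP(\alpha,\theta)$. You instead work through the stick-breaking decomposition ${\xi_j=\Psi_j W_{j,\infty}}$ and the Markov transition \eqref{eq:egp:arrivals}, and you replace the appeal to Kerov by solving the recursion \eqref{eq:egp:recursion} directly. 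The pieces you actually carry out are correct: the conditional independence of $\Psi_j$ and $W_{j,\infty}$ given $T_j$ holds (given all of $T$ the $\Psi_i$ are independent with the law of $\Psi_i$ depending only on $T_i$, and $T$ is Markov under \eqref{eq:egp:arrivals}); your moment formula and functional identity check out; and the endgame ${c_{k+1}-c_k=\alpha}$, hence ${V_{n,k}=\prod_{i=1}^{k-1}(\theta+i\alpha)/(\theta+1)_{n-1}}$, is a clean self-contained substitute for the citation to Kerov.

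The gap is exactly the step you flag yourself: deducing from your functional identity that $V_{n,k+1}/V_{n,k}$ does not depend on $n$. This is the entire content of the converse---everything before it is bookkeeping and everything after it is routine---and it is not proved; you offer two candidate routes (specialize to $s=1$ and iterate, or fall back on the Gnedin--Pitman classification and \cref{sec:arrival:classification}) without executing either, and neither is obviously adequate. Your identity ties $V_{t,j}$ to the entire tail $\braces{V_{t+\delta,j+1}}_{\delta\geq 1}$ through the unknown function $\mu(s,\cdot)$, so comparing it across two values of $j$ at fixed $t$ does not visibly isolate any single ratio of coefficients; and \cref{sec:arrival:classification} is a classification table, not an argument. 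The structural reason the paper's route does not stall here is that it first obtains a closed form for the conditional moments in which the whole $j$-dependence is concentrated in one ratio of $V$'s, so the hypothesis of $j$-independence immediately forces the product form $V_{n,k}=v_k/c_n$. To repair your argument along your own lines you would need the analogous closed form for $\bbE[W_{j,\infty}^s\mid T_j=t]$ for a general Gibbs partition (obtainable from the martingale the paper uses, or from Lemma~4.1 of Griffiths and Span\`o), which collapses your infinite sum into a single ratio; as written, the pivotal implication is asserted rather than proved.
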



\subsection{Graphs with exchangeable interarrival times}
\label{sec:examples:exch:interarrivals}

We next consider $(\alpha,T)$-graphs for which the interarrival times ${\Delta_j = T_j - T_{j-1}}$ are exchangeable. 
An immediate consequence of exchangeability is that ${k^{-1}\sum_{j\leq k}\Delta_j\rightarrow\mu}$ almost surely
for some constant ${\mu}$ in ${[1,\infty]}$. \cref{thm:limiting:degree:sequence} implies:
\begin{corollary}
  If ${\mu}$ is finite, the limiting degrees scale as $n^{1/\gamma}$ in \eqref{eq:limiting:degree:sequence}, where ${\gamma = (\mu - \alpha)/(\mu - 1)}$. If $\text{Var}(\Delta_j)$ is finite for all $j$, then the degree distribution converges to \eqref{eq:degree:distn:linear}.
\end{corollary}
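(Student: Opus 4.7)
The plan is to reduce both assertions to applications of \cref{thm:limiting:degree:sequence} and \cref{theorem:degree:distn} after passing to the iid case via de Finetti's theorem. Since $(\Delta_j)$ is exchangeable, there exists a random probability measure $\mathcal{Q}$ on $\bbN$ such that, conditional on $\mathcal{Q}$, the $\Delta_j$ are \iid with law $\mathcal{Q}$. The conditional strong law of large numbers yields $T_k/k \to \bbE_{\mathcal{Q}}[\Delta_1]$ almost surely, and because the unconditional Cesaro limit is identified with $\mu$, we have $\bbE_{\mathcal{Q}}[\Delta_1]=\mu$ almost surely. In particular, conditional on $\mathcal{Q}$ the hypothesis $t_j/j \to \mu \in (1,\infty)$ of \cref{thm:limiting:degree:sequence} holds.

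For the first assertion, I would apply \cref{thm:limiting:degree:sequence} in the linear regime ($\sigma=1$) conditionally on $\mathcal{Q}$, which delivers the scaling of $(\deg_j(n))_{j\leq r}$ at rate $n^{1/\gamma}$ with $\gamma = (\mu-\alpha)/(\mu-1)$. Because $\gamma$ depends only on the deterministic constants $\mu$ and $\alpha$, the conclusion transfers to the unconditional statement without further work.

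For the second assertion, I would verify the hypotheses of \cref{theorem:degree:distn} in the $\sigma=1$ case conditionally on $\mathcal{Q}$. Exchangeability yields the constant-mean condition $\bbE[\Delta_j \mid \mathcal{Q}] = \mu$; the variance decomposition $\text{Var}(\Delta_j) = \bbE[\text{Var}(\Delta_j \mid \mathcal{Q})] + \text{Var}(\bbE[\Delta_j \mid \mathcal{Q}])$ together with the assumption $\text{Var}(\Delta_j)<\infty$ forces $\text{Var}(\Delta_j \mid \mathcal{Q})<\infty$ almost surely; and the covariance decay bound $\lvert\text{Cov}(\Delta_i,\Delta_j \mid \mathcal{Q})\rvert \leq C_{\Delta}^2 \lvert i-j\rvert^{-\ell_{\Delta}}$ is trivial since conditional independence gives $\text{Cov}(\Delta_i,\Delta_j \mid \mathcal{Q})=0$ for $i \neq j$. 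Applying \cref{theorem:degree:distn} conditionally therefore gives $p_d(n) \to p_d^{\gamma}$ in probability, and averaging over $\mathcal{Q}$ preserves the conclusion.

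The main obstacle — and the reason the hypotheses of the corollary omit a covariance decay condition — is that the unconditional covariance of an infinite exchangeable sequence is constant in $\lvert i-j\rvert$ and hence fails the decay bound of \cref{theorem:degree:distn} unless the sequence is already \iid. De Finetti's theorem is precisely the tool that bypasses this obstruction, turning the exchangeable sequence into a mixture of \iid sequences for which the covariance decay condition is vacuously satisfied; this step is the only nontrivial ingredient in the argument.
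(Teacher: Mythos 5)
Your argument is correct, and it reaches the same conclusion the paper intends: the corollary is stated without a separate proof precisely because it is meant to follow by checking the hypotheses of \cref{thm:limiting:degree:sequence} and \cref{theorem:degree:distn} against the almost sure convergence ${k^{-1}\sum_{j\leq k}\Delta_j\rightarrow\mu}$ that exchangeability provides. Where you diverge is the de Finetti detour, and it is worth being precise about whether it is needed. Under the standing premise of \cref{sec:examples:exch:interarrivals} that the Ces\`aro limit $\mu$ is a deterministic constant, one has ${\bbE[\Delta_1\mid\mathcal{Q}]=\mu}$ almost surely, and therefore ${\text{Cov}(\Delta_i,\Delta_j)=\text{Var}(\bbE[\Delta_1\mid\mathcal{Q}])=0}$ for ${i\neq j}$: the unconditional covariance-decay hypothesis of \cref{theorem:degree:distn} holds trivially with ${C_{\Delta}=0}$, and no conditioning is required. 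The ``obstruction'' you describe---a constant nonzero covariance---arises only when the directing measure has a random mean, in which case $\mu$, $\gamma$ and the limiting degree law would themselves be random and the corollary as stated would not apply; also, your phrase ``unless the sequence is already \iid'' is not the right dichotomy, since zero covariance between distinct terms requires only that the conditional mean be almost surely constant, not full independence. That said, your conditional argument buys genuine robustness: it extends the conclusion, suitably restated with random $\mu$ and $\gamma$, to exchangeable interarrivals whose directing measure has non-degenerate mean, which the unconditional route cannot reach. Your verification of the conditional hypotheses (constant conditional mean, almost surely finite conditional variance via the variance decomposition, vanishing conditional covariances) is sound, as is the transfer of conditional convergence in probability back to the unconditional statement.
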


Stronger results hold when the interarrivals are \iid geometric variables, corresponding to the Yule--Simon
model \cite{Yule:1925,Simon:1955}. Recall that a positive random variable $\mlvar_{\sigma}$ is said to have \emph{Mittag--Leffler distribution} with parameter $\sigma\in(0,1)$ if $\mlvar_{\sigma}=\mathcal{Z}_{\sigma}^{-\sigma}$, where $\mathcal{Z}_{\sigma}$ is a positive $\sigma$-stable random variable, characterized by the Laplace transform $\mathbb{E}[e^{-\lambda\mathcal{Z}}]=e^{-\lambda^{\sigma}}$ and density $f_{\sigma}(z)$. See \cite{Pitman:2006,James:2015aa} for details. Define $\mathcal{Z}_{\sigma,\theta}$ for $\theta > -\sigma$ as a random variable with the polynomially tilted density ${f_{\sigma,\theta}\propto z^{-\theta}f_{\sigma}(z)}$, and let $\mlvar_{\sigma,\theta}=\mathcal{Z}_{\sigma,\theta}^{-\sigma}$. We denote the law of $\mlvar_{\sigma,\theta}$ by $\MittagLeffler(\sigma,\theta)$, which is known as the generalized Mittag--Leffler distribution \citep{Pitman:2003aa,James:2015aa}.

\begin{proposition} \label{prop:ys:limit}
  Let $G$ be an $(\alpha,T)$-graph with $\alpha=0$, and $T$ constructed from \iid $\Geom(\beta)$ interarrival times, for $\beta\in(0,1)$. Then for every $r\in\bbN_+$, conditioned on $T_1,\dotsc,T_r$,
  \begin{align*}
    n^{-(1-\beta)} (\deg_j(n))_{1\leq j \leq r} \mid T_1,\dotsc,T_r 
    \quad\xrightarrow[n\to\infty]{\text{\small a.s.}}\quad
    (\xi_1,\dotsc,\xi_r) \;,
  \end{align*}
  where
  \begin{align}
    \bigl(\xi_j \bigr)_{1 \leq j \leq r} \equdist \mlvar_{1-\beta,T_r-1} \bvar_{T_r,(T_r-1)\frac{\beta}{1-\beta}} \cdot \bigl( \Psi_j \prod_{i=j+1}^r (1-\Psi_i)  \bigr)_{1 \leq j \leq r} \;,
  \end{align}
  with $M_{1-\beta,T_r-1}$, $\bvar_{T_r,(T_r-1)\frac{\beta}{1-\beta}}$, $\Psi_1=1$ and $\Psi_j\sim\BetaDist(1,T_j-1)$ mutually independent random variables for $j \geq 1$. This implies the joint distributional identity
  \begin{align*}
    \gvar_{T_r}^{1-\beta} \cdot \bigl(\xi_j \bigr)_{1 \leq j \leq r} \equdist 
    \gvar_{T_r} \cdot \bigl( \Psi_j \prod_{i=j+1}^r (1-\Psi_i)  \bigr)_{1 \leq j \leq r} \;,
  \end{align*}
  and the marginal identities for $j>1$
  \begin{gather}
    \xi_j \equdist \mlvar_{1-\beta} \bvar_{1,T_j-1}^{1-\beta} \\
    \xi_{j+1} \mid \xi_j, \Delta_{j+1} \equdist \xi_j \bvar^{1-\beta}_{T_j,\Delta_{j+1}} \\
    \xi_j \equdist \mlvar_{1-\beta,T_j-1} \bvar_{T_j,(T_j-1)\frac{\beta}{1-\beta}} \Psi_j \equdist
     \mlvar_{1-\beta,T_j-1} \bvar_{1,\frac{T_j - 1}{1-\beta}} \equdist
     \mlvar_{1-\beta, T_j} \bvar_{1,\frac{T_j - 1 + \beta}{1-\beta}} \\
    \xi_{j+1} \mid \xi_j, \Delta_{j+1} \equdist \xi_j \bvar_{\frac{T_j}{1-\beta},\frac{\Delta_{j+1}}{1-\beta}} \prod_{i=1}^{\Delta_{j+1}} \bvar_{\frac{T_j - 1 + i - \beta}{1-\beta},\frac{\beta}{1-\beta}} \\
    \xi_j \gvar_{T_j}^{1-\beta} \equdist \gvar_{1}  \;.
  \end{gather}
\end{proposition}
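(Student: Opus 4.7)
The strategy is to combine the stick-breaking representation of \cref{theorem:sb} with the closed-form probability in \cref{prop:exact:probability}, reducing the problem to identifying a single residual random variable via moment matching. The proof parallels that of \cref{prop:crp:limits}, but the linear regime ($\sigma=1$) forces $\prod_{\ell>r}(1-\Psi_\ell)$ to zero, so an extra Mittag--Leffler factor must appear to absorb the scaling.

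First I would apply \cref{thm:limiting:degree:sequence} with $\sigma=1$, $\mu=1/\beta$, $\gamma=1/(1-\beta)$ to obtain $n^{-(1-\beta)}(\deg_j(n))_{j\leq r}\to(\xi_j)$ almost surely. The representation of \cref{theorem:sb} shows that, once $k(m)\geq r$, the attachment probability to vertex $j\leq r$ factors as $\Psi_j\prod_{\ell=j+1}^r(1-\Psi_\ell)\cdot\prod_{\ell=r+1}^{k(m)}(1-\Psi_\ell)$. The first two factors depend only on $(\Psi_1,\ldots,\Psi_r)$; the third depends only on post-$T_r$ data, and is conditionally independent of $\Psi_1,\ldots,\Psi_r$ given $T_1,\ldots,T_r$. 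Combined with conditional Bernoulli concentration for the increments $\indicator[L_m=j]$, this yields
\[
  \xi_j \equas \Psi_j\prod_{\ell=j+1}^r(1-\Psi_\ell)\cdot R,\qquad 1\leq j\leq r,
\]
with $R$ a single positive random variable depending only on $T_r$ and the post-$T_r$ stick-breaking variables. This produces the factored form of the proposition and reduces the problem to identifying the conditional law of $R$ given $T_r$.

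I would then identify $R$ by computing its conditional moments and matching them to a Mellin transform. The process $\tilde M_r(n):=\deg_r(n)\Gamma(n)/\Gamma(n+1-\beta)$ is a nonnegative $\mathcal{F}_n$-martingale with constant mean $\Gamma(T_r)/\Gamma(T_r+1-\beta)$ (verified by a one-step computation using the Yule--Simon update with $\alpha=0$), is uniformly integrable in each $L^p$ for $p>-(1-\beta)/2$, and converges a.s.\ to $\Psi_r R$. Passing to the limit in \eqref{eq:exact:probability} with $\alpha=0$ and averaging over the \iid $\Geom(\beta)$ interarrivals $\Delta_{r+1},\Delta_{r+2},\dots$ expresses $\bbE[R^p\mid T_r]$ as a product of gamma-function ratios, which I would match to the Mellin transform of $\mlvar_{1-\beta,T_r-1}\cdot\bvar_{T_r,(T_r-1)\beta/(1-\beta)}$; moment-determinacy of both factors then identifies $R$. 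A cleaner alternative that may bypass the moment computation is the continuous-time embedding $s=\log n$: in $s$-coordinates, the marked vertex's degree is a rate-$(1-\beta)$ Yule process, and the tilted Mittag--Leffler factor arises, as in Pitman's $\alpha$-diversity theorem, from appropriately rescaled functionals of the immigration clock.

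The remaining distributional identities are algebraic consequences of the beta-gamma algebra \eqref{eq:beta:gamma:algebra}. Specializing the main identity at $r=j$ and collapsing $\bvar_{T_j,(T_j-1)\beta/(1-\beta)}\cdot\Psi_j\equdist\bvar_{1,(T_j-1)/(1-\beta)}$ via the instance $\bvar_{1,a}\bvar_{1+a,b}\equdist\bvar_{1,a+b}$ with $a=T_j-1$, $b=(T_j-1)\beta/(1-\beta)$ yields $\xi_j\equdist\mlvar_{1-\beta,T_j-1}\bvar_{1,(T_j-1)/(1-\beta)}$; the conditional recursion for $\xi_{j+1}\mid\xi_j,\Delta_{j+1}$ follows by regrouping $\Psi$-factors across one step; and multiplying the main identity by an independent $\gvar_{T_j}^{1-\beta}$ and using $\mlvar_{1-\beta}\gvar_1^{1-\beta}\equdist\text{Exp}(1)$-type identities for Mittag--Leffler--stable pairs gives the joint gamma identity. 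The main obstacle throughout is the moment-matching step: the scaling and $\Psi$-factorization are forced by \cref{theorem:sb,thm:limiting:degree:sequence}, but the exact tilting parameter $T_r-1$ of the Mittag--Leffler factor and the beta parameters $(T_r,(T_r-1)\beta/(1-\beta))$ are not automatic and require careful bookkeeping when summing \eqref{eq:exact:probability} against the geometric interarrival distribution.
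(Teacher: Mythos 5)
Your proposal is correct and takes essentially the same route as the paper: the paper's proof also rests on the nonnegative martingale $\Gamma(n)/\Gamma(n+\bar p(1-\beta))\prod_{j\le r}\Gamma(\ct_{j,n}+p_j)/\Gamma(\ct_{j,n})$ (valid precisely because the memoryless geometric interarrivals make the compensator deterministic), its $L_2$-boundedness and a.s.\ convergence, and identification of the limiting joint moments as gamma-function ratios matched to the Mellin transform of $\mlvar_{1-\beta,T_r-1}\,\bvar_{T_r,(T_r-1)\beta/(1-\beta)}\cdot\prod_j\Psi_j^{p_j}(1-\Psi_j)^{\bar p_{j-1}}$, with the remaining identities from the beta--gamma algebra and the Mittag--Leffler identities of James. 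Your extra framing via a residual variable $R$ and the optional continuous-time embedding are cosmetic variations on the same argument.
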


\Citet*{Pekoz:Rollin:Ross:2017aa} consider the following two-color P\'{o}lya urn: Let $\Delta_1,\Delta_2,\dotsc$ be drawn \iid from some distribution $P_{\Delta}$, and define $T_j = \sum_{i=1}^j \Delta_j$. Starting with $w$ white balls and $b$ black balls, at each step $n\neq T_j$, a ball is drawn and replaced along with another of the same color. On steps $n=T_j$, a black ball is added to the urn. Of interest is the distribution of the number of white balls in the urn after $n$ steps.

In the language of $(\alpha,T)$-graphs, consider a seed graph $G_{w+b}$ with $k_{w+b} < w + b$ vertices and $w + b$ edges arranged arbitrarily, the only constraint being that there exists a bipartition $\mathbf{V}_w \cup \mathbf{V}_b=\mathbf{V}(G_{w+b})$ so that the total degree of the vertices in $\mathbf{V}_w$ is $D_{w}(w+b) = w$, and of those in $\mathbf{V}_b$ is $D_{b}(w+b)=b$. For $T$ constructed from \iid interarrivals, $D_{w}(n)$ corresponds to the number of white balls after $n$ steps. For interarrivals drawn \iid from the geometric distribution, the following result characterizes the limiting distribution of $D_w(n)$, which was left as an open question by \citet*{Pekoz:Rollin:Ross:2017aa}.
\begin{proposition} \label{prop:immigration:urn}
  Let $D_{w}(n)$ be the number of white balls in the P\'{o}lya urn with immigration from {\rm\citep{Pekoz:Rollin:Ross:2017aa}} starting with $w$ white balls and $b$ black balls, where the immigration times have \iid $\Geom(\beta)$ distribution. Then
  \begin{align*}
    n^{-(1-\beta)}D_{w}(n) 
    \quad\xrightarrow[n\to\infty]{\text{\small a.s.}}\quad
    \xi_{w,w+b} 
    \;\equdist\;
    \bvar_{w,b} \bvar_{w+b,(w+b-1)\frac{\beta}{1-\beta}} \mlvar_{1-\beta,w+b-1} \;,
  \end{align*}
  which implies the distributional identities
  \begin{gather}
    \xi_{w,w+b} 
    \;\equdist\;
    \bvar_{w,\frac{(w-1)\beta + b}{1-\beta}} \mlvar_{1-\beta,w+b-1} \\
    \xi_{w,w+b} 
    \;\equdist\;
    \bvar_{w,\frac{w\beta + b}{1-\beta}} \mlvar_{1-\beta,w+b} \\
    \xi_{w,w+b} \gvar_{w+b}^{1-\beta} 
    \;\equdist\;
    \gvar_w \;.
  \end{gather}
\end{proposition}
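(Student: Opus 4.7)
The plan is to reduce to the Yule--Simon setup of \cref{prop:ys:limit} by embedding the P\'olya urn with immigration into an $(\alpha=0,T)$-graph. Realize each of the initial $w+b$ balls as a distinct vertex, so that $T_j=j$ for $j\leq w+b$; declare vertices $1,\dotsc,w$ white and $w+1,\dotsc,w+b$ black. From step $w+b+1$ onward the interarrivals $\Delta_{w+b+1},\Delta_{w+b+2},\dotsc$ are iid $\Geom(\beta)$ and every subsequent immigrant vertex is black. Under this embedding $D_w(n)=\sum_{j=1}^w\deg_j(n)$.

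Applying \cref{prop:ys:limit} conditionally on $T_1,\dotsc,T_{w+b}$ with $r=w+b$ and $T_r=w+b$ yields
\begin{equation*}
  n^{-(1-\beta)}\deg_j(n)\xrightarrow[n\to\infty]{\text{a.s.}}\xi_j,\qquad(\xi_j)_{j=1}^{w+b}\equdist M\cdot\bigl(\Psi_j\prod_{i=j+1}^{w+b}(1-\Psi_i)\bigr)_{j=1}^{w+b},
\end{equation*}
where $M:=\mlvar_{1-\beta,w+b-1}\bvar_{w+b,(w+b-1)\beta/(1-\beta)}$, $\Psi_1=1$, and $\Psi_j\sim\BetaDist(1,j-1)$ independently for $j\geq 2$. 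Summing the first $w$ coordinates, the telescoping identity $\sum_{j=1}^w\Psi_j\prod_{i=j+1}^w(1-\Psi_i)=1$ combined with the factorization $\prod_{i=j+1}^{w+b}(1-\Psi_i)=\prod_{i=j+1}^w(1-\Psi_i)\prod_{i=w+1}^{w+b}(1-\Psi_i)$ gives
\begin{equation*}
  n^{-(1-\beta)}D_w(n)\xrightarrow[n\to\infty]{\text{a.s.}}M\cdot\prod_{i=w+1}^{w+b}(1-\Psi_i).
\end{equation*}

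Each $1-\Psi_i$ is $\BetaDist(i-1,1)$ independently, so iterating $\bvar_{a,b}\bvar_{a+b,c}\equdist\bvar_{a,b+c}$ collapses $\prod_{i=w+1}^{w+b}(1-\Psi_i)$ to $\bvar_{w,b}$, establishing the main identity $\xi_{w,w+b}\equdist\bvar_{w,b}\bvar_{w+b,(w+b-1)\beta/(1-\beta)}\mlvar_{1-\beta,w+b-1}$. The three remaining identities then follow from standard beta--Mittag-Leffler calculus: the first applies $\bvar_{w,b}\bvar_{w+b,c}\equdist\bvar_{w,b+c}$ with $c=(w+b-1)\beta/(1-\beta)$ to collapse the two beta factors; the second uses a Mittag-Leffler tilting relation that absorbs one beta factor into a unit shift $w+b-1\mapsto w+b$ of the tilt parameter; and $\xi_{w,w+b}\gvar_{w+b}^{1-\beta}\equdist\gvar_w$ is the marginal form of the joint gamma identity in \cref{prop:ys:limit}. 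The principal (and rather mild) technical point is that \cref{prop:ys:limit} is stated with all interarrivals iid $\Geom(\beta)$, whereas here the first $w+b-1$ are deterministically equal to $1$; this is immaterial because the Mittag-Leffler scaling depends only on the asymptotic mean of the interarrivals, which is unaffected by finitely many deterministic initial values.
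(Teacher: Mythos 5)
Your proof is correct, but it takes a genuinely different route from the paper's. The paper treats the white population as a single aggregate: it constructs the one-dimensional martingale
\begin{equation*}
  Z_n^{\beta}(p,w+b)=\frac{\Gamma(n)}{\Gamma(n+p(1-\beta))}\,\frac{\Gamma(D_w(n)+p)}{\Gamma(D_w(n))}\;,
\end{equation*}
shows it is nonnegative and bounded in $L_2$ for ${n\geq w+b}$, and identifies the moments of the almost sure limit directly with those of ${\bvar_{w,b}\,\bvar_{w+b,(w+b-1)\beta/(1-\beta)}\,\mlvar_{1-\beta,w+b-1}}$; this works because the aggregate white count is itself a two-colour urn, so no per-vertex decomposition is needed. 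You instead realize each initial ball as a singleton vertex with ${T_j=j}$ for ${j\leq w+b}$, invoke \cref{prop:ys:limit} for the joint per-vertex limit, and re-aggregate by summing the first $w$ stick-breaking coordinates, using ${\sum_{j=1}^{w}\Psi_j\prod_{i=j+1}^{w}(1-\Psi_i)=1}$ and collapsing ${\prod_{i=w+1}^{w+b}(1-\Psi_i)}$ to $\bvar_{w,b}$ via the beta-gamma algebra. Both arguments rest on the same martingale machinery (yours simply pushes it into \cref{prop:ys:limit}); the paper's is more economical, while yours makes the graph embedding explicit, exhibits the finer joint structure from which the aggregate identity follows, and genuinely derives the proposition as a corollary of the Yule--Simon result. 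The remaining identities are handled at the same level of detail in both.

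One remark on your closing ``mild technical point'': the justification via the asymptotic mean of the interarrivals is not quite the right reason, since the limiting distributional identity (as opposed to the scaling exponent) depends on more than the mean. The clean argument is that the event ${\{\Delta_2=\dotsb=\Delta_{w+b}=1\}}$ has positive probability $\beta^{w+b-1}$ under \iid $\Geom(\beta)$ interarrivals, the subsequent interarrivals remain \iid geometric by independence, and the identity in \cref{prop:ys:limit} is stated conditionally on ${T_1,\dotsc,T_r}$; equivalently, the martingale in its proof is a martingale for ${n\geq T_r}$ regardless of the pre-$T_r$ arrival mechanism, so only the configuration at time $T_r$ (here deterministic, all degrees equal to one) enters. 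This closes the gap rigorously.
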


\begin{table}
  \begin{center}
\resizebox{\textwidth}{!}{
\begin{tabular}{ccccc}
  ${\mathbb{P}\braces{n+1\text{ is arrival time}|G_{n}}}$
  &
  $\Phi^{-1}(G)$ is 
  &
  &
  &
  \\
  depends on 
  &
  $(\alpha,T)$-urn
  &
  $\mathcal{L}(\Phi^{-1}(G))$
  &
  $\mathcal{L}(G)$
  &
  $\mathcal{L}(\Delta_k)$\\
  \midrule
  $n$
  &
  yes
  &
  $\CRP(\theta)$
  &
  Hollywood model
  &
  \eqref{eq:crp:arrivals}\\
  $n,\#\text{vertices in }G_{n}$
  &
  yes
  &
  Gibbs partition
  &
  $\subset$ rank one edge exch.
  &
  \eqref{eq:egp:arrivals}\\
  $n$, $\#\text{vertices in }G_{n}$, degrees
  &
  no
  &
  exch. partition
  &
  rank one edge exch.
  &
  -- \\
  deterministic
  &
  yes
  &
  --
  &
  PA tree
  &
  $\delta_2$ \\
  independent
  &
  yes
  &
  Yule--Simon process 
  &
  ACL \citep{Aiello:Chung:Lu:2001,Aiello:Chung:Lu:2002}
  &
  $\Geom(\beta)$\\
  $n+1 - T_{k(n)}$
  &
  yes
  &
  $(\alpha,T)$-urn
  &
  $(\alpha,T)$-graph
  &
  \iid\\
  \bottomrule
\end{tabular}
}
  \end{center}
\caption{Classification of different models according to which statistics of $G_n$ determine the
  probability that a new vertex is observed at time $n+1$.}
\label{tab:igor}
\end{table}

\subsection{Classification by arrival time probabilities}
\label{sec:arrival:classification}

\citet*{deBlasi:etal:2015} classify exchangeable partitions according to the quantities on which the probability
of observing a new block in draw ${n+1}$ depends \citep[][Proposition 1]{deBlasi:etal:2015}, conditionally
on the partition observed up to time $n$.
This classification can be translated to random graphs via the induced partition ${\Phi^{-1}(G)}$, and
can be extended further since partitions induced by $(\alpha,T)$-graphs need not be exchangeable:
See \cref{tab:igor}.
One might also consider a sequence of interarrival distributions indexed by the number of vertices, yielding a bespoke generalization of the last row, where the probability of a new vertex depends on $n+1 - T_{k(n)}$ and the number of vertices.

 \section*{Acknowledgments} We are grateful to Nathan Ross for helpful comments on the manuscript. BBR is supported by the European Research Council under the European Union's Seventh Framework Programme (FP7/2007--2013) / ERC grant agreement no. 617071. PO was supported in part by grant FA9550-15-1-0074 of AFOSR.

\bibliography{references}

\begin{thebibliography}{42}
\providecommand{\natexlab}[1]{#1}
\providecommand{\url}[1]{\texttt{#1}}
\expandafter\ifx\csname urlstyle\endcsname\relax
  \providecommand{\doi}[1]{doi: #1}\else
  \providecommand{\doi}{doi: \begingroup \urlstyle{rm}\Url}\fi

\bibitem[Aiello et~al.(2001)Aiello, Chung, and Lu]{Aiello:Chung:Lu:2001}
W.~Aiello, F.~Chung, and L.~Lu.
\newblock A random graph model for power law graphs.
\newblock \emph{Experiment. Math.}, 10\penalty0 (1):\penalty0 53--66, 2001.

\bibitem[Aiello et~al.(2002)Aiello, Chung, and Lu]{Aiello:Chung:Lu:2002}
W.~Aiello, F.~Chung, and L.~Lu.
\newblock Random evolution in massive graphs.
\newblock In J.~Abello, P.~M. Pardalos, and M.~G.~C. Resende, editors,
  \emph{Handbook of Massive Data Sets}, volume~II, pages 97--122, 2002.

\bibitem[Arratia et~al.(2013)Arratia, Goldstein, and
  Kochman]{Arratia:Goldstein:Kochman:2013:1}
R.~Arratia, L.~Goldstein, and F.~Kochman.
\newblock Size bias for one and all.
\newblock 08 2013.
\newblock
  \href{https://arxiv.org/abs/1308.2729}{https://arxiv.org/abs/1308.2729}.

\bibitem[Barab\'{a}si and Albert(1999)]{Barabasi:Albert:1999}
A.-L. Barab\'{a}si and R.~Albert.
\newblock Emergence of scaling in random networks.
\newblock \emph{Science}, 186\penalty0 (5439):\penalty0 509--512, 1999.

\bibitem[Berger et~al.(2014)Berger, Borgs, Chayes, and
  Saberi]{Berger:etal:2014}
N.~Berger, C.~Borgs, J.~T. Chayes, and A.~Saberi.
\newblock Asymptotic behavior and distributional limits of preferential
  attachment graphs.
\newblock \emph{Ann. Probab.}, 42\penalty0 (1):\penalty0 1--40, 01 2014.

\bibitem[Bingham et~al.(1989)Bingham, Goldie, and Teugels]{Bingham:1989}
N.~H. Bingham, C.~M. Goldie, and J.~L. Teugels.
\newblock \emph{Regular Variation}, volume~27 of \emph{Encyclopedia of
  Mathematics and its Applications}.
\newblock Cambridge University Press, 07 1989.

\bibitem[Blackwell and MacQueen(1973)]{Blackwell:MacQueen:1973}
D.~Blackwell and J.~B. MacQueen.
\newblock Ferguson distributions via p{\'o}lya urn schemes.
\newblock \emph{Ann. Statist.}, 1\penalty0 (2):\penalty0 353--355, 03 1973.

\bibitem[Bollob\'{a}s et~al.(2001)Bollob\'{a}s, Riordan, Spencer, and
  Tusn\'{a}dy]{Bollobas:etal:2001}
B.~Bollob\'{a}s, O.~Riordan, J.~Spencer, and G.~Tusn\'{a}dy.
\newblock The degree sequence of a scale-free random graph process.
\newblock \emph{Random Structures \& Algorithms}, 18\penalty0 (3):\penalty0
  279--290, 2001.

\bibitem[Bollob\'{a}s et~al.(2007)Bollob\'{a}s, Janson, and
  Riordan]{Bollobas:Janson:Riordan:2007}
B.~Bollob\'{a}s, S.~Janson, and O.~Riordan.
\newblock The phase transition in inhomogeneous random graphs.
\newblock \emph{Random Structures \& Algorithms}, 31\penalty0 (1):\penalty0
  3--122, 2007.

\bibitem[Broderick et~al.(2012)Broderick, Jordan, and
  Pitman]{Broderick:Jordan:Pitman:2012}
T.~Broderick, M.~I. Jordan, and J.~Pitman.
\newblock Beta processes, stick-breaking and power laws.
\newblock \emph{Bayesian Analysis}, 7\penalty0 (2):\penalty0 439--476, 06 2012.

\bibitem[Cai et~al.(2016)Cai, Campbell, and Broderick]{Cai:etal:2016}
D.~Cai, T.~Campbell, and T.~Broderick.
\newblock Edge-exchangeable graphs and sparsity.
\newblock In D.~D. Lee, M.~Sugiyama, U.~V. Luxburg, I.~Guyon, and R.~Garnett,
  editors, \emph{Advances in Neural Information Processing Systems 29}, pages
  4242--4250. Curran Associates, Inc., 12 2016.

\bibitem[Caron and Fox(2017)]{Caron:Fox:2017}
F.~Caron and E.~B. Fox.
\newblock Sparse graphs using exchangeable random measures.
\newblock \emph{Journal of the Royal Statistical Society: Series B (Statistical
  Methodology)}, 79\penalty0 (5):\penalty0 1--44, 2017.

\bibitem[Chung and Lu(2006)]{Chung:Lu:2006}
F.~Chung and L.~Lu.
\newblock \emph{Complex Graphs and Networks}.
\newblock (CBMS Regional Conference Series in Mathematics). American
  Mathematical Society, 2006.

\bibitem[Crane and Dempsey(2016)]{Crane:Dempsey:2016}
H.~Crane and W.~Dempsey.
\newblock Edge exchangeable models for network data.
\newblock 03 2016.
\newblock URL \url{https://arxiv.org/abs/1603.04571}.

\bibitem[Daley and Vere-Jones(2008)]{Daley:VereJones:TPP2}
D.~J. Daley and D.~Vere-Jones.
\newblock \emph{An Introduction to the Theory of Point Processes}, volume~II of
  \emph{Probability and its Applications}.
\newblock Springer-Verlag New York, 2008.

\bibitem[De~Blasi et~al.(2015)De~Blasi, Favaro, Lijoi, Mena, Pr\"{u}nster, and
  Ruggiero]{deBlasi:etal:2015}
P.~De~Blasi, S.~Favaro, A.~Lijoi, R.~H. Mena, I.~Pr\"{u}nster, and M.~Ruggiero.
\newblock Are {G}ibbs-type priors the most natural generalization of the
  {D}irichlet process?
\newblock \emph{IEEE Transactions on Pattern Analysis and Machine
  Intelligence}, 37\penalty0 (2):\penalty0 212--229, 02 2015.

\bibitem[Doksum(1974)]{Doksum:1974}
K.~Doksum.
\newblock Tailfree and neutral random probabilities and their posterior
  distributions.
\newblock \emph{Ann. Probab.}, 2\penalty0 (2):\penalty0 183--201, 04 1974.

\bibitem[Dufresne(2010)]{Dufresne:2010}
D.~Dufresne.
\newblock G distributions and the beta-gamma algebra.
\newblock \emph{Electron. J. Probab.}, 15:\penalty0 2163--2199, 2010.

\bibitem[Durrett(2006)]{Durrett:2006}
R.~Durrett.
\newblock \emph{Random Graph Dynamics}.
\newblock Cambridge Series in Statistical and Probabilistic Mathematics.
  Cambridge University Press, New York, NY, USA, 2006.

\bibitem[Feller(1971)]{Feller:1971}
W.~Feller.
\newblock \emph{An Introduction to Probability Theory and Its Applications},
  volume~2.
\newblock Wiley, 2nd edition, 1971.

\bibitem[Gnedin and Pitman(2006)]{Gnedin:Pitman:2006}
A.~Gnedin and J.~Pitman.
\newblock Exchangeable {G}ibbs partitions and stirling triangles.
\newblock \emph{Journal of Mathematical Sciences}, 138\penalty0 (3):\penalty0
  5674--5685, 2006.
\newblock ISSN 1573-8795.

\bibitem[Gnedin et~al.(2007)Gnedin, Hansen, and
  Pitman]{Gnedin:Hansen:Pitman:2007}
A.~Gnedin, B.~Hansen, and J.~Pitman.
\newblock Notes on the occupancy problem with infinitely many boxes: general
  asymptotics and power laws.
\newblock \emph{Probab. Surveys}, 4:\penalty0 146--171, 2007.

\bibitem[Griffiths and Span{\`o}(2007)]{Griffiths:Spano:2007}
R.~C. Griffiths and D.~Span{\`o}.
\newblock Record indices and age-ordered frequencies in exchangeable {G}ibbs
  partitions.
\newblock \emph{Electron. J. Probab.}, 12:\penalty0 1101--1130, 2007.

\bibitem[Hofstad(2016)]{Hofstad:2016}
R.~{\relax van der}. Hofstad.
\newblock \emph{Random Graphs and Complex Networks}, volume~1 of
  \emph{Cambridge Series in Statistical and Probabilistic Mathematics}.
\newblock Cambridge University Press, 2016.

\bibitem[James(2015)]{James:2015aa}
L.~F. James.
\newblock Generalized {Mittag--Leffler} distributions arising as limits in
  preferential attachment models.
\newblock 09 2015.
\newblock URL \url{http://arxiv.org/abs/1509.07150}.

\bibitem[Janson(2006)]{Janson:2006}
S.~Janson.
\newblock Limit theorems for triangular urn schemes.
\newblock \emph{Probability Theory and Related Fields}, 134\penalty0
  (3):\penalty0 417--452, 2006.

\bibitem[Janson(2010)]{Janson:2010}
S.~Janson.
\newblock Moments of gamma type and the {Brownian} supremum process area.
\newblock \emph{Probab. Surveys}, 7:\penalty0 1--52, 2010.

\bibitem[Janson(2017)]{Janson:2017aa}
S.~Janson.
\newblock On edge exchangeable random graphs.
\newblock 02 2017.
\newblock URL \url{https://arxiv.org/abs/1702.06396}.

\bibitem[Kerov(2006)]{Kerov:2006}
S.~V. Kerov.
\newblock Coherent random allocations, and the {Ewens--Pitman} formula.
\newblock \emph{Journal of Mathematical Sciences}, 138\penalty0 (3):\penalty0
  5699--5710, 2006.

\bibitem[M{\'o}ri(2005)]{Mori:2005}
T.~F. M{\'o}ri.
\newblock The maximum degree of the {Barab{\'a}si--Alb{\'e}rt} random tree.
\newblock \emph{Comb. Probab. Comput.}, 14\penalty0 (3):\penalty0 339--348, May
  2005.

\bibitem[Pek\"{o}z et~al.(2016)Pek\"{o}z, R\"{o}llin, and
  Ross]{Pekoz:Rollin:Ross:2016}
E.~A. Pek\"{o}z, A.~R\"{o}llin, and N.~Ross.
\newblock Generalized gamma approximation with rates for urns, walks and trees.
\newblock \emph{Ann. Probab.}, 44\penalty0 (3):\penalty0 1776--1816, 05 2016.

\bibitem[Pek\"{o}z et~al.(2017)Pek\"{o}z, R\"{o}llin, and
  Ross]{Pekoz:Rollin:Ross:2017}
E.~A. Pek\"{o}z, A.~R\"{o}llin, and N.~Ross.
\newblock Joint degree distributions of preferential attachment random graphs.
\newblock \emph{Advances in Applied Probability}, 49\penalty0 (2):\penalty0
  368--387, 2017.

\bibitem[Pek\"{o}z et~al.(forthcoming)Pek\"{o}z, R\"{o}llin, and
  Ross]{Pekoz:Rollin:Ross:2017aa}
E.~A. Pek\"{o}z, A.~R\"{o}llin, and N.~Ross.
\newblock P\'{o}lya urns with immigration at random times.
\newblock \emph{Bernoulli}, forthcoming.

\bibitem[Pitman(1996)]{Pitman:1996aa}
J.~Pitman.
\newblock \emph{Some developments of the Blackwell-MacQueen urn scheme}, volume
  Volume 30 of \emph{Lecture Notes--Monograph Series}, pages 245--267.
\newblock Institute of Mathematical Statistics, Hayward, CA, 1996.

\bibitem[Pitman(2003)]{Pitman:2003aa}
J.~Pitman.
\newblock \emph{Poisson-Kingman partitions}, volume Volume 40 of \emph{Lecture
  Notes--Monograph Series}, pages 1--34.
\newblock Institute of Mathematical Statistics, Beachwood, OH, 2003.

\bibitem[Pitman(2006)]{Pitman:2006}
J.~Pitman.
\newblock \emph{Combinatorial Stochastic Processes}, volume 1875 of \emph{Ecole
  d'Et{\'e} de Probabilit{\'e}s de Saint-Flour}.
\newblock Springer-Verlag Berlin Heidelberg, 2006.

\bibitem[Pitman and Racz(2015)]{Pitman:Racz:2015}
J.~Pitman and M.~Racz.
\newblock Beta-gamma tail asymptotics.
\newblock \emph{Electron. Commun. Probab.}, 20:\penalty0 1--7, 2015.

\bibitem[Revuz and Yor(1999)]{Revuz:Yor:1999}
D.~Revuz and M.~Yor.
\newblock \emph{Continuous martingales and Brownian motion}.
\newblock Springer Science \& Business Media, 3rd edition, 1999.

\bibitem[Simon(1955)]{Simon:1955}
H.~A. Simon.
\newblock On a class of skew distribution functions.
\newblock \emph{Biometrika}, 42\penalty0 (3-4):\penalty0 425--440, 1955.

\bibitem[Tricomi and Erd{\'e}lyi(1951)]{Tricomi:Erdelyi:1951}
F.~G. Tricomi and A.~Erd{\'e}lyi.
\newblock The asymptotic expansion of a ratio of gamma functions.
\newblock \emph{Pacific J. Math.}, 1\penalty0 (1):\penalty0 133--142, 1951.

\bibitem[Veitch and Roy(2015)]{Veitch:Roy:2015}
V.~Veitch and D.~M. Roy.
\newblock The class of random graphs arising from exchangeable random measures.
\newblock 12 2015.
\newblock URL \url{http://arxiv.org/abs/1512.03099}.

\bibitem[Yule(1925)]{Yule:1925}
G.~U. Yule.
\newblock {A Mathematical Theory of Evolution, Based on the Conclusions of Dr.
  J. C. Willis, F.R.S.}
\newblock \emph{Philosophical Transactions of the Royal Society of London B:
  Biological Sciences}, 213\penalty0 (402-410):\penalty0 21--87, 1925.
\newblock ISSN 0264-3960.

\end{thebibliography}
\bibliographystyle{abbrvnat}

\begin{appendices}
\crefalias{section}{appsec}
\crefalias{subsection}{appsec}


\section{Proofs}
\label{appx:proofs}

\subsection{Proof of \texorpdfstring{\cref{theorem:sb}}{Theorem 2}}
\label{sec:proof:representation}

\begin{proof}
	Let $G(\alpha,T)=((L_1,L_2),(L_3,L_4),\dotsc)$ be an $(\alpha,T)$-graph, and let $k_n$ denote the number of vertices after $n$ edge ends have been drawn. For notational convenience, let $D_j(n) := \deg_j(n)$ be the degree of the $j$th vertex at step $n$. Then from the sequence of laws $P_{\alpha}(k;l_1,\dotsc,l_j)$ in \eqref{eq:p:alpha}, the probability of a particular sequence is 
	\begin{align*}
		\bbP[G_{n/2}(\alpha,T) \mid T=t ] = \frac{1}{\Gamma(n - k_n\alpha)} \prod_{j=1}^{k_n} \frac{\Gamma(t_j - j\alpha) \Gamma(\ct_j(n) - \alpha)}{\Gamma(t_j - 1 - (j-1)\alpha + \delta_1(j)) \Gamma(1 - \alpha)} \;.
	\end{align*}
	Now let $H_n(\alpha,T)$ denote the first $n$ elements of the sequence $H(\alpha,T)$, so that
	\begin{align*}
		\bbP[H_{n/2}(\alpha,T), (\Psi_j)_{2 \leq j \leq k_n} \mid T=t] = \prod_{i=2}^{k_n} \frac{\Gamma(t_j - j\alpha) \Psi_j^{\ct_j(n) - \alpha - 1} (1-\Psi_j)^{\bar{\ct}_{j-1}(n) - (j-1)\alpha - 1} }{\Gamma(1-\alpha)\Gamma(t_j - 1 - (j-1)\alpha)} \;.
	\end{align*}
	Hence, by marginalizing $(\Psi_j)_{2 \leq j \leq k_n}$, it follows that
	\begin{align*}
		\bbP[H_{n/2}(\alpha,T) \mid T=t ] &= \int_{[0,1]^{k_n}} \bbP[H_{n/2}(\alpha,T), (\Psi_j)_{2 \leq j \leq k_n} \mid T=t] \; d\Psi_2 \dotsm d\Psi_{k_n} \\
			&= \bbP[G_{n/2}(\alpha,T) \mid T=t ] \;.
	\end{align*}
	The equality is true for all $n\in\mathbb{N}_+$, and $t\in\mathbb{T}$, implying that ${\bbP[G(\alpha,T)]=\bbP[H(\alpha,T)]}$.
\end{proof}

\subsection{Proof of \texorpdfstring{\cref{prop:tail:product:convergence}}{Proposition 5}}
\label{sec:proof:tail:product:convergence}

The proof will make repeated use of the expectation of $W_{j,k}$ as $k\to\infty$. Observe that
\begin{align*}
	- \log \bbE[W_{j,k}] & = \sum_{i=j+1}^k \log \biggl( 1 + \frac{1-\alpha}{t_i - 1 - (i-1)\alpha} \biggr)
	 = \sum_{i=j+1}^k \frac{1-\alpha}{t_i - 1 - (i-1)\alpha} + O(i^{-2})
\end{align*}
By assumption in the linear regime, $t_i - 1 - (i-1)\alpha$ is well-approximated by $(i-1)(\mu_1-\alpha)$ for $i$ large enough, resulting in a finite error $C_{j,k}$ that converges as $k\to\infty$. Hence,
\begin{align*}
	- \log \bbE[W_{j,k}] & = C_{j,k} + \frac{1-\alpha}{\mu_1-\alpha} \sum_{i=j}^{k-1} i^{-1} + O(i^{-2}) = C_{j,k} + \frac{1-\alpha}{\mu_1-\alpha}\log\biggl(\frac{k-1}{j}\biggr) + O(j^{-1}) \;,
\end{align*}
and therefore for all $j>1$,
\begin{align} \label{eq:linear:w:expectation}
	\bbE[W_{j,k}] = A_j \biggl( \frac{j}{k-1}  \biggr)^{\frac{1-\alpha}{\mu_1 - \alpha}} \quad \text{as} \quad k\to\infty \;.
\end{align}
Clearly, this converges to zero for each $j$. 
Similarly, in the sub-linear regime,
\begin{align*}
	- \log \bbE[W_{j,k}] & = C'_{j,k} + \sum_{i=j}^{k-1} \frac{1-\alpha}{\mu_{\sigma} i^{1/\sigma} - i\alpha} + O(i^{-2}) \\
	& = C'_{j,k} + \frac{\sigma(1-\alpha)}{\alpha(1-\sigma)} \log \biggl( \frac{\mu_{\sigma} - \alpha (k-1)^{-\frac{1-\sigma}{\sigma}}}{\mu_{\sigma} - \alpha j^{-\frac{1-\sigma}{\sigma}}} \biggr) + O(j^{-1}) \;.
\end{align*}
Since $\frac{1-\sigma}{\sigma} > 0$,
\begin{align} \label{eq:sublinear:w:expectation}
	\bbE[W_{j,k}] = A'_j \biggl(1 - \frac{\alpha}{\mu_{\sigma} j^{\frac{1-\sigma}{\sigma}}}\biggr)^{\frac{\sigma(1-\alpha)}{\alpha(1-\sigma)}} \biggl(1 - \frac{\alpha}{\mu_{\sigma} (k-1)^{\frac{1-\sigma}{\sigma}}} \biggr)^{-\frac{\sigma(1-\alpha)}{\alpha(1-\sigma)}} \quad \text{as} \quad k\to\infty \;.
\end{align}
Clearly, this converges to something non-zero for each $j$.

\begin{proof}[Proof of \cref{prop:tail:product:convergence}]
	Define $M_{j,k}:=W_{j,k}/\bbE[W_{j,k}]$. Since ${\bbE[ M_{j,k+1} \mid M_{j,k} ]= M_{j,k}}$ and $\bbE[M_{j,j+1}=1]$, $M_{j,k}$ is a nonnegative martingale with mean 1 for $k > j$; it therefore converges almost surely to a random variable $M_{j,\infty}$. Hence,
	\begin{align*}
		W_{j,\infty}:= \lim_{k\to\infty} W_{j,k} = M_{j,\infty} \lim_{k\to\infty} \bbE[W_{j,k}] \;.
	\end{align*}
	Simple calculations show that $\bbE[M^2_{j,k}] = 1 + O(k^{-1})$, and therefore $M_{j,k}$ is bounded in $L_2$ and also in $L_1$. Hence, $W_{j,\infty}$ exists if and only if $\lim_{k\to\infty} \bbE[W_{j,k}]$ exists. By \eqref{eq:linear:w:expectation} and \eqref{eq:sublinear:w:expectation}, that is the case.

	By Markov's inequality, for any $\lambda > 0$,
	\begin{align*}
	\bbP[ W_{j,k} \geq \lambda ] \leq \frac{1}{\lambda} \bbE[W_{j,k}]
		\leq
		\begin{cases}
			O(k^{-\frac{1-\alpha}{\mu_1 - \alpha}}) \quad \text{for} \quad \sigma=1 \\
			O(\lambda^{-1})  \quad \text{for} \quad \sigma < 1
		\end{cases} \;,
	\end{align*}
	indicating that $W_{j,\infty} = 0$ in the linear case. On the other hand,
	\begin{align*}
		\bbP[ W_{j,k} \leq \lambda ] & = \bbP[ W_{j,k}^{-1} \geq \lambda^{-1} ] \leq \lambda \bbE[W_{j,k}^{-1}] = \lambda \bbE[W_{j,k}]^{-1} \prod_{i=j+1}^k \frac{1 - \frac{1}{t_i - i\alpha}}{1 - \frac{1}{t_i - 1 - (i-1)\alpha}} \\
		& \leq
		\begin{cases}
			O(k^{\frac{1-\alpha}{\mu_1 - \alpha}}) \quad \text{for} \quad \sigma=1 \\
			O(\lambda)  \quad \text{for} \quad \sigma < 1
		\end{cases} \;,
	\end{align*}
	indicating that $W_{j,\infty} > 0$ in the sub-linear case. 
\end{proof}

\subsection{Proofs for \texorpdfstring{\cref{sec:deg:dist:fixed}}{Section 3.2}}
\label{sec:proof:degree:sequence}

The proof of \cref{thm:limiting:degree:sequence} uses martingale methods adapted and extended from those used by \citet{Mori:2005}. See also \cite{Hofstad:2016,Durrett:2006}. For notational convenience, we let $\ct_{j,n} = \ct_j(n)$. Fix $r\in\bbN_+$ and $p = (p_1,p_2,\dotsc)$ such that $p_j > -(1-\alpha)$ for each $1\leq j \leq r$, and $p_j=0$ for $j>r$. Let $\bar{p}_j=\sum_{i=1}^j p_i$, with $\bar{p}_0=0$ and $\bar{p}_j=\bar{p}_r:=\bar{p}$ for $j>r$. For fixed $t\in\mathbb{T}$, define
\begin{align} \label{eq:martingale:z}
	& Z_n(p,t) := \\
	& \frac{\Gamma(n - k_n \alpha)}{\Gamma(n - k_n \alpha + \bar{p}_{k_n})} 
	  \prod_{j=1}^{r} \frac{\Gamma(\ct_{j,n} - \alpha + p_j)}{\Gamma(\ct_{j,n} - \alpha)}
	   \prod_{k=r+1}^{k_n} \frac{\Gamma(t_k - 1 - (j-1)\alpha)\Gamma(t_k - j\alpha + \bar{p}_k)}{\Gamma(t_k - 1 - (k-1)\alpha + \bar{p}_k)\Gamma(t_k - k\alpha)} \nonumber \;.
\end{align}
The asymptotic behavior of $Z_n(p,t)$ is described by the following two lemmas, from which \cref{thm:limiting:degree:sequence} follows.

\begin{lemma} \label{lemma:degree:sequence:martingale}
	Let $Z_n(p,t)$ be as in \eqref{eq:martingale:z}, with $p_j > -(1-\alpha)$ for each $j\geq 1$. Then $Z_n(p,t)$ is a nonnegative martingale with respect to $(\mathcal{F}_n)_{n\geq T_r}$, the filtration generated by the $(\alpha,t)$ sampling process, for $n\geq T_r$, and therefore
	\begin{align*}
		Z_n(p,t) \xrightarrow[n\to\infty]{\text{\small a.s.}} \xi_1^{p_1} \dotsm \xi_r^{p_r} \;.
	\end{align*}
	Furthermore, if $p_j > -(1-\alpha)/2$ for each $j\geq 1$, then $Z_n(p,t)$ converges in $L_2$ and therefore also in $L_1$.
\end{lemma}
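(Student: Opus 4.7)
The plan is to prove the three claims---martingale property, almost sure convergence with identification of the limit, and $L^2$-boundedness---in turn.

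\textbf{Martingale identity.} Fix $n \geq T_r$ and condition on $\mathcal{F}_n$. Two cases arise at step $n+1$. When $n+1 = T_{k_n+1}$, the increment is deterministic: $k_{n+1}=k_n+1$, $\ct_{k_n+1,n+1}=1$, and $\ct_{j,n+1}=\ct_{j,n}$ for $j \leq k_n$. Plugging $t_{k_n+1}=n+1$ into the newly introduced factor of $\prod_{k=r+1}^{k_n+1}$, the two Gamma ratios comprising that factor cancel exactly against the change in the leading normalizing ratio, so $Z_{n+1}/Z_n = 1$. Otherwise, vertex $j \in \{1,\dots,k_n\}$ is selected with probability $(\ct_{j,n}-\alpha)/(n-k_n\alpha)$, and by $\Gamma(x+1)=x\Gamma(x)$ one computes $Z_{n+1}/Z_n$ given that $j$ is chosen as $[(n-k_n\alpha)/(n-k_n\alpha+\bar{p}_{k_n})]\cdot[(\ct_{j,n}-\alpha+p_j)/(\ct_{j,n}-\alpha)]$, with the $p_j$ factor absent when $j>r$. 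Summing the conditional expectation against the selection probabilities and using $\sum_{j\le k_n}(\ct_{j,n}-\alpha)=n-k_n\alpha$ together with $\sum_{j\le r}p_j=\bar{p}=\bar{p}_{k_n}$ collapses the sum to $1$.

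\textbf{Almost sure convergence.} All Gamma arguments are positive for $n \geq T_r$ (since $\ct_{j,n}\geq 1$, $t_k-k\alpha\geq k(1-\alpha)>0$, and $p_j>-(1-\alpha)$), so $Z_n\geq 0$. The martingale convergence theorem then gives $Z_n \to Z_\infty$ almost surely. To identify $Z_\infty$, apply Stirling factor by factor. Writing $A_k(\bar{p})$ for the $k$-th tail factor, one finds in the linear regime $\log A_k(\bar{p}) \sim \bar{p}(1-1/\gamma)/k$, so the tail product scales like $n^{\bar{p}(1-1/\gamma)}$; combining with the normalizing ratio $\sim(n-k_n\alpha)^{-\bar{p}}$ and the vertex factors $\sim\prod_j\ct_{j,n}^{p_j}$ yields $Z_n \sim c(t)\,n^{-\bar{p}/\gamma}\prod_{j\le r}\ct_{j,n}^{p_j}$. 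In the sub-linear regime the tail product converges to a constant and the same form holds with $1/\gamma$ replaced by $1$. Since $Z_n\to Z_\infty$ almost surely, the joint limit $\xi_j := \lim_n \text{const}\cdot n^{-1/\gamma}\ct_{j,n}$ exists, and $Z_\infty = \xi_1^{p_1}\cdots\xi_r^{p_r}$.

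\textbf{$L^2$-boundedness.} When $p_j>-(1-\alpha)/2$, Step~1 applied with $2p$ in place of $p$ shows $Z_n(2p,t)$ is itself a non-negative martingale, so $\mathbb{E}[Z_n(2p,t)]=\mathbb{E}[Z_{T_r}(2p,t)]<\infty$ uniformly in $n$. The ratio $R_n := Z_n(p,t)^2/Z_n(2p,t)$ factors into expressions of the form $\Gamma(x)\Gamma(x+2a)/\Gamma(x+a)^2$: one from the normalization (with $a=\bar{p}$, $x=n-k_n\alpha$), one for each vertex $j\le r$ (with $a=p_j$, $x=\ct_{j,n}-\alpha$), and a ratio of two such for each $k>r$. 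The Binet expansion gives $\log[\Gamma(x)\Gamma(x+2a)/\Gamma(x+a)^2] = a^2/x + O(1/x^2)$, so within each tail pair the leading $O(1/x)$ terms subtract to $O(t_k^{-2})$, which is summable under \eqref{eq:vertex:arrival:rate}, while the condition $p_j>-(1-\alpha)/2$ keeps all individual Gamma arguments bounded away from zero. Hence $\sup_n R_n \leq \kappa<\infty$, and $\sup_n\mathbb{E}[Z_n(p,t)^2]\leq\kappa\sup_n\mathbb{E}[Z_n(2p,t)]<\infty$, giving $L^2$ (and thus $L^1$) convergence.

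The main obstacle is the $L^2$-bound: the infinite product defining $R_n$ requires second-order Stirling asymptotics so that the leading $1/x$ contributions in the tail cancel, since the individual $O(1/x)$ terms alone are not summable in the linear regime. The second-order cancellation combined with the arrival-rate hypothesis \eqref{eq:vertex:arrival:rate} is what makes $\sup_n R_n$ finite.
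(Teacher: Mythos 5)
Your proposal is correct and follows essentially the same route as the paper: verify the one-step conditional expectation in the two cases (deterministic new-vertex step versus degree-biased selection), invoke nonnegative-martingale convergence, identify the limit via Stirling asymptotics of the Gamma ratios, and bound $\bbE[Z_n(p,t)^2]$ by comparison with the martingale $Z_n(2p,t)$. The only cosmetic difference is in controlling the ratio $Z_n(p,t)^2/Z_n(2p,t)$: the paper uses the monotonicity of $x\mapsto\Gamma(x+y)/\Gamma(x)$ together with $c_{\bar p}^2/c_{2\bar p}\to 1$ to get an essentially termwise bound, whereas you obtain a uniform bound on the ratio via a second-order Binet expansion with pairwise cancellation in the tail product; both are valid.
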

\begin{proof}
	$Z_n(p,t)$ is nonnegative by construction. Furthermore,
	\begin{align*}
		& \bbE\biggl[ \frac{\Gamma(n+1 - k_{n+1} \alpha)}{\Gamma(n+1 - k_{n+1} \alpha + \bar{p}_{k_{n+1}})} \prod_{j=1}^{k_{n+1}} \frac{\Gamma(\ct_{j,n+1} - \alpha + p_j)}{\Gamma(\ct_{j,n+1} - \alpha)} \mid \mathcal{F}_{n} \biggr] := \bbE[ R_{n+1}(p) \mid \mathcal{F}_n] \\
		& \; = R_{n}(p) \biggl[ \indicator\{ t_{k_n+1} > n+1 \} + \frac{\Gamma(n -k_n\alpha + \bar{p}_{k_n})\Gamma(n+1 - (k_n+1)\alpha)}{\Gamma(n+1 - (k_n+1)\alpha + \bar{p}_{k_n+1})\Gamma(n - k_n\alpha)}\indicator\{ t_{k_n+1}=n+1 \} \biggr] \;,
	\end{align*}
	from which it follows that $\bbE[Z_{n+1}(p,t)\mid \mathcal{F}_n] = Z_n(p,t)$ for all $n\geq T_r$. Furthermore, $\bbE[Z_n(p,t)] < \infty$ and does not depend on $n$. To bound $Z_n(p,t)$ in $L_2$, observe that by the properties of the gamma function \cite{Tricomi:Erdelyi:1951},
	\begin{align} \label{eq:c:asymptotics}
		c_{\bar{p}}:=\frac{\Gamma(n - k_n\alpha)}{\Gamma(n - k_n\alpha + \bar{p}_{k_n})} = n^{-\bar{p}}(1 + O(n^{-1})) \quad \text{as} \quad n\to\infty \;,
	\end{align}
	which implies that $c_{\bar{p}}^2/c_{2\bar{p}}\to 1$. Using the fact that $x \mapsto \Gamma(x+y)/\Gamma(x)$ is increasing in $x$ for $y\geq 0$, it follows that $Z_n(p,t)^2\leq Z_n(2p,t)$ (see \citep[][Section 8.7]{Hofstad:2016} for a similar argument). Hence, ${\bbE[Z_n(p,t)^2]\leq \bbE[Z_n(2p,t)] < \infty}$ for $p_j > -(1-\alpha)/2$.
\end{proof}

\begin{lemma} \label{lemma:z:asymptotics}
	Let $Z_n(p,t)$ be as in \eqref{eq:martingale:z}, with $p_j > -(1-\alpha)$ for each $j\geq 1$. Then there exists a constant $M_{\bar{p}}(t)$ such that
	\begin{align*}
		Z_n(p,t) = M_{\bar{p}}(t) n^{-\bar{p}/\gamma} \prod_{j=1}^r \ct_{j,n}^{p_j}(1 + O(n^{-1})) \quad \text{as} \quad n\to\infty \;.
	\end{align*}
\end{lemma}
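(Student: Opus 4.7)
The plan is to dissect $Z_n(p,t)$ factor by factor with the Stirling-type expansion $\Gamma(x+a)/\Gamma(x) = x^a(1 + O(x^{-1}))$ as $x\to\infty$, and to collect all $t$-dependent, $n$-independent corrections into the constant $M_{\bar{p}}(t)$.

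First, the opening ratio yields $n^{-\bar{p}}(1 + O(n^{-1}))$ directly from \eqref{eq:c:asymptotics} once $k_n > r$ so that $\bar{p}_{k_n} = \bar{p}$. For the middle product, in the linear regime each $\ct_{j,n} \to \infty$ almost surely, and Stirling delivers $\Gamma(\ct_{j,n} - \alpha + p_j)/\Gamma(\ct_{j,n} - \alpha) = \ct_{j,n}^{p_j}(1 + O(\ct_{j,n}^{-1}))$, producing the factor $\prod_{j=1}^r \ct_{j,n}^{p_j}$ on the right.

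The crux is the tail product over $r+1 \leq k \leq k_n$. Setting $x_k := t_k - 1 - (k-1)\alpha$ and $y_k := x_k + (1-\alpha)$, each factor expands as
\begin{align*}
    \frac{\Gamma(x_k)\Gamma(y_k + \bar{p})}{\Gamma(x_k + \bar{p})\Gamma(y_k)}
    = \Bigl(1 + \frac{1-\alpha}{x_k}\Bigr)^{\bar{p}}\bigl(1 + O(x_k^{-2})\bigr),
\end{align*}
the essential point being that the first-order Stirling corrections in numerator and denominator almost cancel since $y_k - x_k = 1-\alpha$, dropping the residual from $O(x_k^{-1})$ to $O(x_k^{-2})$. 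Because $x_k \asymp k$ in the linear regime, $\prod_k(1+O(k^{-2}))$ converges to a finite $t$-dependent constant with a truncation error of order $O(1/k_n) = O(1/n)$. For the remaining leading part, taking logs and expanding $\log(1+u) = u + O(u^2)$ gives
\begin{align*}
    \bar{p}\sum_{k=r+1}^{k_n}\log\!\Bigl(1 + \tfrac{1-\alpha}{x_k}\Bigr)
    = \bar{p}(1-\alpha)\sum_{k=r+1}^{k_n}\frac{1}{x_k} + C(t) + O(1/n) \;.
\end{align*}
The hypothesis $t_k/k \to \mu$ gives $x_k = k(\mu-\alpha)(1 + o(1))$, and comparison with the harmonic series yields $\sum_{k=r+1}^{k_n} 1/x_k = \frac{1}{\mu-\alpha}\log k_n + C'(t) + o(1)$, so the third factor contributes $M'(t)\cdot k_n^{\bar{p}(1-\alpha)/(\mu-\alpha)}$. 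Converting $k_n$ to $n$ via $k_n/n \to 1/\mu$ and multiplying by the opening factor yields the exponent
\begin{align*}
    n^{-\bar{p}}\cdot n^{\bar{p}(1-\alpha)/(\mu-\alpha)} = n^{-\bar{p}(\mu-1)/(\mu-\alpha)} = n^{-\bar{p}/\gamma} \;,
\end{align*}
and folding all $t$-dependent constants into $M_{\bar{p}}(t)$ gives the claim.

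The hardest step is securing the multiplicative error $1 + O(n^{-1})$ rather than merely $1 + o(1)$. The cancellation that drops the per-factor Stirling error from $O(k^{-1})$ to $O(k^{-2})$ is essential: without it, the product over $k$ would accumulate a spurious $\log n$ factor. Sharpening the residual $o(1)$ in the harmonic comparison for $\sum 1/x_k$ to $O(1/n)$ requires a quantitative form of $t_k/k \to \mu$, which is natural given the moment and covariance hypotheses invoked elsewhere in the paper; the weaker statement $1 + o(1)$ already suffices to feed Lemma 6 into the proof of \cref{thm:limiting:degree:sequence}.
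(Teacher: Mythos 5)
Your proof is correct and follows essentially the same route as the paper's: isolate the degree ratios as $\prod_{j\leq r} \ct_{j,n}^{p_j}$, reduce the remaining factors to harmonic sums via $\log(1+u)=u+O(u^2)$, and combine the exponents $-\bar p$ and $\bar p(1-\alpha)/(\mu-\alpha)$ into $-\bar p/\gamma$. The only cosmetic difference is that the paper first telescopes the opening gamma ratio together with the tail product into an exact product over edge steps $m=1,\dotsc,n-1$ before splitting and taking logarithms, whereas you expand the gamma ratios in the tail product directly and exploit the cancellation $y_k-x_k=1-\alpha$ to reduce the per-factor Stirling error to $O(x_k^{-2})$; both arguments land on the same pair of harmonic sums and share the same (in your case explicitly acknowledged) looseness in upgrading the residual from $o(1)$ to $O(n^{-1})$ absent a quantitative rate on $t_k/k\to\mu$.
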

\begin{proof}
	The properties of the gamma function show that for each $j\leq r$,
	\begin{align*}
		\frac{\Gamma(\ct_{j,n}-\alpha + p_j)}{\Gamma(\ct_{j,n})} = \ct_{j,n}^{p_j}(1 + O(n^{-1})) \quad \text{as} \quad n\to\infty \;.
	\end{align*}
	Hence, it suffices to examine the behavior of the product
	\begin{align*}
		M_n(p,t) & := \frac{\Gamma(n - k_n\alpha)}{\Gamma(n - k_n\alpha + \bar{p}_{k_n})}\frac{\Gamma(1 - \alpha + p_1)}{\Gamma(1 - \alpha)} 
	   \prod_{j=2}^{k_n} \frac{\Gamma(t_j - 1 - (j-1)\alpha)\Gamma(t_j - j\alpha + \bar{p}_j)}{\Gamma(t_j - 1 - (j-1)\alpha + \bar{p}_j)\Gamma(t_j - j\alpha)} \\
	   & = \prod_{m=1}^{n-1} \frac{m - k_m\alpha}{m - k_m\alpha + \bar{p}_{k_m} \indicator\{ t_{k_m+1} \neq m+1 \}} \\
	   & = \biggl( \prod_{m=1}^{n-1} \frac{m - k_m\alpha}{m - k_m\alpha + \bar{p}_{k_m}} \biggr) \biggl( \prod_{j=1}^{k_n - 1} \frac{t_{j+1} - 1 - j\alpha + \bar{p}_{j}}{t_{j+1} - 1 - j\alpha} \biggr) := M_n^{(1)}(p,t) M_n^{(2)}(p,t) \;.
	\end{align*}
	Taking the logarithm, the first term is
	\begin{align*}
		\ln M_n^{(1)}(p,t) & = \sum_{m=1}^{n-1} \ln(m - k_m\alpha) - \ln(m - k_m\alpha + \bar{p}_{k_m}) \\
		& = \sum_{m=1}^{n-1} \ln\bigl( 1 + \frac{\bar{p}_{k_m}}{m - k_m\alpha} \bigr) = C_1 - \sum_{m=1}^{n-1} \frac{\bar{p}_{k_m}}{m - k_m\alpha} + O(m^{-2}) \;,
	\end{align*}
	where $C_1$ captures the error in the approximation for terms where $\bar{p}_{k_m} > m - k_m\alpha$, which is finite because $p_j=0$ for $j>r$. By assumption when $\mu < \infty$, for any $\epsilon > 0$ there exists some finite $K$ such that for all $j\geq K$, $\lvert k_m - m/\mu \rvert \leq \epsilon$. Therefore, absorbing the additional error into $C_1$,
	\begin{align*}
		\ln M_n^{(1)}(p,t) & = C_1 - \sum_{m=1}^{n-1} \frac{ \bar{p} }{m(1 - \alpha/\mu)} + O(m^{-2}) = C_1 - \frac{\bar{p}\mu}{\mu - \alpha}\ln n + O(n^{-1}) \;.
	\end{align*}
	In the sub-linear regime ($\mu=\infty$), the second term is $-\bar{p} \ln n$. 
	Similarly, for the second term of $M_n(p,t)$,
	\begin{align*}
		\ln M_n^{(2)}(p,t) & = \sum_{j=1}^{k_n - 1} \ln\bigl( 1 + \frac{\bar{p}_j}{t_{j+1} - 1 - j\alpha} \bigr) = C_2 + \sum_{j=1}^{k_n - 1} \frac{\bar{p}_j}{t_{j+1} - 1 - j\alpha} + O(j^{-2}) \\
		& = C_2 + \sum_{j=1}^{k_n - 1} \frac{\bar{p}}{j(\mu - \alpha)} + O(j^{-2}) = C_2 + \frac{\bar{p}_j}{\mu - \alpha} \ln n + O(n^{-1}) \;.
	\end{align*}
	In the sub-linear regime, the second term is also $O(n^{-1})$.
	Hence, letting $M_{\bar{p}}(t)=e^{C_1 + C_2}$, the result follows.
\end{proof}

\begin{proof}[Proof of \texorpdfstring{\cref{thm:limiting:degree:sequence}}{Theorem 6}]
	Equations \eqref{eq:limit:seq:full} and \eqref{eq:limiting:degree:sequence} follow from \cref{lemma:degree:sequence:martingale,lemma:z:asymptotics}. To establish \eqref{eq:degree:sequence:g}, let $p\setminus p_j$ be $p$ with the $j$th element set to zero, observe that
	\begin{align*}
		& \bbE [\zeta_1^{p_1} \dotsm\zeta_r^{p_r}] = \bbE[Z_{t_r}(p,t)] \\
		& = \bbE[Z_{t_r-1}(p\setminus p_r,t)] \frac{ \Gamma(1 - \alpha + p_r) \Gamma(t_r - r\alpha) \Gamma(t_r - 1 - (r-1)\alpha + \bar{p}_{r-1}) }{ \Gamma(1-\alpha) \Gamma(t_{r} - 1 - (r-1)\alpha) \Gamma(t_{r} - r\alpha + \bar{p}_r) } \\
		& = \prod_{j=2}^r \frac{ \Gamma(1 - \alpha + p_j) \Gamma(t_j - j\alpha) \Gamma(t_j - 1 - (j-1)\alpha + \bar{p}_{j-1}) }{ \Gamma(1-\alpha) \Gamma(t_{j} - 1 - (j-1)\alpha) \Gamma(t_{j} - j\alpha + \bar{p}_j) }= \prod_{j=2}^r \bbE[\Psi_j^{p_j} (1-\Psi_j)^{\bar{p}_{j-1}} ] \;.
	\end{align*}
\end{proof}

\subsection{Proofs for \texorpdfstring{\cref{sec:degree:distributions}}{Section 3.4}}
\label{sec:proof:degree:distributions}

In the sub-linear regime, the degree distribution follows from results on exchangeable random partitions and can be read from \cite[Lemma 3.11]{Pitman:2006}. The proof of \cref{theorem:degree:distn} in the linear case relies on the following lemma, which says that for large enough $j$, $W_{j,k}$ is well-approximated by $(j/k)^{1-1/\gamma}$ for all $k > j$, where $\gamma = (\mu-\alpha)/(\mu-1)$.
\begin{lemma} \label{lemma:pp:1}
	Let $G$ be a random $(\alpha,T)$ graph for some $\alpha\in(-\infty,1)$ and $T\in\mathbb{T}$. Assume that $\text{Var}(\Delta_j)<V^2_{\Delta}$ for all $j\in\bbN_+$, that $\bbE[\Delta_j]=\mu$ for all $j > 1$, and that $\lvert\text{Cov}(\Delta_i,\Delta_j)\rvert = C^2_{\Delta}\lvert i-j\rvert^{-\ell_{\Delta}}$ for all $i,j > 1$, some $C^2_{\Delta}\geq 0$, and some $\ell_{\Delta}>0$. 
	Let $W_{j,k}$ be as in \eqref{eq:sb:2}. Then for every $\lambda \geq 0$, there exists some $K < \infty$ that does not depend on $n$ such that for all $k\geq K$,
	\begin{align*}
		\bbP\left[ \sup_{k+1\leq m \leq n} \left\lvert W_{j,m} \left( \frac{m}{k} \right)^{1-1/\gamma} - 1  \right\rvert \leq \lambda \right] \geq 1 - \lambda \;.
	\end{align*}
\end{lemma}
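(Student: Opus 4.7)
The plan is to read the probability as governing $W_{k,m}=\prod_{\ell=k+1}^m(1-\Psi_\ell)$ (the tail product starting at $k$, matching the expectation scale $(k/m)^{1-1/\gamma}$ derived in the proof of \cref{prop:tail:product:convergence} with $\mu_1=\mu$). The argument splits the randomness into two ingredients: (a) the conditional fluctuations of the $\Psi_\ell$ given the arrival-time sequence $T$, which are independent and beta-distributed; and (b) the fluctuations of $t_\ell=\sum_{j\leq\ell}\Delta_j$ around $\ell\mu$, which control how close the conditional expectation $\bbE[W_{k,m}\mid T]$ is to the target $(k/m)^{1-1/\gamma}$.

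For step (a), I would condition on $T$ and introduce the positive martingale
\begin{equation*}
  \mathcal{N}_m:=\frac{W_{k,m}}{\bbE[W_{k,m}\mid T]}=\prod_{\ell=k+1}^m\frac{1-\Psi_\ell}{\bbE[1-\Psi_\ell\mid T]},\qquad m\geq k,
\end{equation*}
with $\mathcal{N}_k=1$. A direct computation with beta moments (using $a=1-\alpha$, $b_\ell=t_\ell-1-(\ell-1)\alpha$) gives $\bbE[\mathcal{N}_m^2\mid T]=\prod_{\ell=k+1}^m\bigl(1+O(\ell^{-2})\bigr)=1+O(k^{-1})$ on the event $\{t_\ell\geq \tfrac12\ell(\mu-\alpha)\text{ for all }\ell\geq k\}$, which has probability $\to 1$ as $k\to\infty$ by the LLN assumption. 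Doob's $L^2$ maximal inequality then yields
\begin{equation*}
  \bbP\bigl[\sup_{m>k}|\mathcal{N}_m-1|>\lambda/3\;\big|\;T\bigr]\leq 9\lambda^{-2}\bigl(\bbE[\mathcal{N}_\infty^2\mid T]-1\bigr)=O(k^{-1}\lambda^{-2}),
\end{equation*}
which can be made $<\lambda/3$ by choosing $k$ sufficiently large.

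For step (b), I would expand the logarithm exactly as in the proof of \cref{prop:tail:product:convergence}:
\begin{equation*}
  -\log\bbE[W_{k,m}\mid T]=\sum_{\ell=k+1}^m\frac{1-\alpha}{t_\ell-\ell\alpha}+O(k^{-1}).
\end{equation*}
The assumptions on $(\Delta_j)$ give $\mathrm{Var}(t_\ell)=O(\ell^{\max(1,\,2-\ell_\Delta)})$, which via a maximal inequality for partial sums of weakly dependent variables (applied along a dyadic sequence $\ell=2^i$ and interpolated) upgrades the pointwise LLN to a uniform bound $\sup_{\ell\geq k}|t_\ell-\ell\mu|/\ell\leq k^{-\delta}$ with probability $\geq 1-\lambda/3$, for some $\delta>0$. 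Substituting and applying $\sum_{\ell=k+1}^m\ell^{-1}=\log(m/k)+O(k^{-1})$ yields
\begin{equation*}
  \bbE[W_{k,m}\mid T]=(k/m)^{1-1/\gamma}\bigl(1+O(k^{-\delta})\bigr)
\end{equation*}
uniformly over $m>k$ on that event.

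A union bound over the two events, together with the multiplicative decomposition $W_{k,m}(m/k)^{1-1/\gamma}=\mathcal{N}_m\cdot\bbE[W_{k,m}\mid T](m/k)^{1-1/\gamma}$, delivers the claim for all $k\geq K(\lambda)$, with $K$ depending only on $\lambda$ (not on $n$), since both bounds are uniform in $m$. The main obstacle is step (b): the fluctuations of $T$ are only weakly dependent through the prescribed covariance decay, and to convert pointwise concentration of $t_\ell$ into the uniform-in-$\ell$ bound needed to approximate the whole tail sum $\sum_{\ell=k+1}^m(t_\ell-\ell\alpha)^{-1}$ one must invoke a maximal inequality appropriate to the decay rate $\ell_\Delta$; the $\Psi$-side martingale estimate, by contrast, is essentially immediate from Doob's inequality once the variance computation is done.
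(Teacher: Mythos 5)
Your proposal follows essentially the same route as the paper's proof: normalize the tail product by its conditional expectation given $T$ to obtain a mean-one martingale controlled by Doob's $L^2$ maximal inequality, then show the conditional expectation matches the power-law target $(k/m)^{1-1/\gamma}$ up to a uniformly small error driven by fluctuations of the arrival times, which the paper also controls with a Doob-type maximal inequality (applied to the running sum of absolute deviations, viewed as a nonnegative submartingale, rather than your dyadic chaining). The decomposition, the role of the covariance-decay assumption, and the final union bound all coincide with the paper's argument.
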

\begin{proof}
	Condition on $T=t$ and define for fixed $j$
	\begin{align*}
		M_{j,k} := W_{j,k}\prod_{j=k+1}^k \frac{T_j - j\alpha}{T_j - 1 - (j-1)\alpha} \;,
	\end{align*}
	which is a martingale with mean 1 for $k\geq j+1$. By Doob's maximal inequality \cite[\egc][Chapter II]{Revuz:Yor:1999}, for any $c > 0$ and $j \geq 1$,
	\begin{align} \label{eq:doob:1}
		\bbP\bigl[ \sup_{j+1\leq m \leq k} \lvert M_{j,m} - 1 \rvert \geq c \mid T \bigr] \leq \frac{1}{c^2} \bbE[ (M_{j,k} - 1)^2 \mid T] \;.
	\end{align}
	The right-hand side is 
	\begin{align} \label{eq:rhs:bound}
		\frac{1}{c^2} \bbE[ (M_{j,k} - 1)^2 \mid T] 
			& = \frac{1}{c^2} \biggl(\frac{\bbE[W_{j,k}^2 \mid T]}{\bbE[ W_{j,k} \mid T]^2} - 1 \biggr)
			 \leq \frac{1}{c^2} \frac{1}{1-\alpha} \frac{1}{j} + O(j^{-2}) \;.
	\end{align}
	The product in $M_{j,k}$ is
	\begin{align*}
		&\exp \biggl( \sum_{i=j+1}^k \log\biggl( 1 + \frac{1-\alpha}{T_i - 1 -(i-1)\alpha}  \biggr)  \biggr)
			 = \exp\biggl( \sum_{i=j+1}^k \frac{1-\alpha}{T_i - 1 -(i-1)\alpha} + O(i^{-2}) \biggr) \\
			& = \biggl( \frac{k}{j} \biggr)^{1-1/\gamma} e^{-\epsilon_{j,k}} \bigl( 1 + O(j^{-1}) \bigr)
			\quad \text{where} \quad
			\epsilon_{j,k} := \sum_{i=j+1}^k \frac{(1-\alpha)}{(i-1)(\mu-\alpha)} - \frac{(1-\alpha)}{T_i -1 - (i-1)\alpha} \;.
	\end{align*}
	Define $\bar{\epsilon}_{j,k}$ to be sum of the absolute values of the terms in $\epsilon_{j,k}$. Because it is a running sum of nonnegative terms, $\bar{\epsilon}_{j,k}$ is a nonnegative submartingale. Therefore, another application of Doob's maximal inequality followed by Jensen's inequality shows that
	\begin{align*}
		\bbP[ \sup_{j+1\leq m \leq k} \lvert \epsilon_{j,m} \rvert \geq c_{\epsilon} ] 
			& \leq \bbP[ \sup_{j+1\leq m \leq k} \bar{\epsilon}_{j,m} \geq c_{\epsilon}] \\
			& \leq \frac{\bbE[\bar{\epsilon}_{j,k}]}{c_{\epsilon}} 
				\leq \frac{1}{c_{\epsilon}(\mu-\alpha)}\sum_{i=j+1}^k \frac{1}{j^2} \bbE[ \lvert T_j - (j-1)\mu - 1 \rvert ] \\
			& \leq \frac{1}{c_{\epsilon}(\mu-\alpha)}\sum_{i=j+1}^k \frac{1}{j^2} \biggl(\text{Var}\biggl(\sum_{i=2}^j (\Delta_i - \mu)\biggr)\biggr)^{1/2} \\
			& \leq \frac{1}{c_{\epsilon}(\mu-\alpha)}\sum_{i=j+1}^k \frac{1}{j^2} (V_{\Delta}^2 (j + C_{\Delta}^2 j^{2-\ell_{\Delta}}) )^{1/2} \\
			& \leq \frac{\sqrt{V^2_{\Delta} + C^2_{\Delta}}}{c_{\epsilon}(\mu-\alpha)} \biggl( \frac{2/\ell_+}{j^{\ell_+/2}} - \frac{2/\ell_+}{k^{\ell_+/2}} + O(j^{-(1+\ell_+/2)}) \biggr) \;,
	\end{align*}
	where $\ell_+ := \min\{\ell_{\Delta},1\}$. 
	Let $A_{j,k}^{\delta}$ be the event that $\lvert \epsilon_{j,m} \rvert < j^{-\delta}$ for each $j+1\leq m\leq k$ and some fixed $0 < \delta < \ell_+/2$. Then
	\begin{align*}
		\bbP[A_{j,k}^{\delta}] & = \bbP[ \sup_{j+1\leq m \leq k} \lvert \epsilon_{j,m} \rvert < j^{-\delta} ] \geq \biggl( 1 - \frac{2\sqrt{V^2_{\Delta} + C^2_{\Delta}}}{\ell_+(\mu-\alpha)} \bigl( \frac{1}{j^{\ell_+/2-\delta}} + O(j^{-(1+\ell_+/2-\delta)}) \bigr)\biggr)\;.
	\end{align*}
	Hence, the probability of $A_{j,k}^{\delta}\to 1$ as $j\to\infty$. Now,
	taking expectation with respect to $T$ of the left-hand side of \eqref{eq:doob:1} yields
	\begin{align*}
		& \bbE_T\bigl[\bbP\bigl[ \sup_{k+1\leq m \leq k} \lvert W_{j,k} \bigl( \frac{m}{j} \bigr)^{1-1/\gamma}(1 + \epsilon_{j,m} + O(j^{-1})) - 1 \rvert \geq c \mid T \bigr] \bigr] \\
			& \quad = \bbP\bigl[ \sup_{j+1\leq m \leq k} \lvert W_{j,k} \bigl( \frac{m}{j} \bigr)^{1-1/\gamma} - 1 \rvert \geq c \bigr] \dotsb \\
			& \quad\quad \times \bbE_T\left[ 
				\indicator_{A^{\delta}_{j,k}} 
				\frac{\bbP\bigl[ \sup_{j+1\leq m \leq k} \lvert W_{j,k} \bigl( \frac{m}{j} \bigr)^{1-1/\gamma}(1 + \epsilon_{j,m} + O(j^{-1})) - 1 \rvert \geq c \mid T \bigr]}{\bbP\bigl[ \sup_{j+1\leq m \leq k} \lvert W_{j,k} \bigl( \frac{m}{j} \bigr)^{1-1/\gamma} - 1 \rvert \geq c \bigr]}   
				\right] \dotsb \\
			& \quad\quad + \bbE_T\bigl[ (1-\indicator_{A^{\delta}_{j,k}}) \bbP\bigl[ \sup_{j+1\leq m \leq k} \lvert W_{j,k} \bigl( \frac{m}{j} \bigr)^{1-1/\gamma}(1 + \epsilon_{j,m} + O(j^{-1})) - 1 \rvert \geq c \mid T \bigr] \bigr] \;.
	\end{align*}
	Observe that as $j\to\infty$, the last term converges to zero by monotone convergence, while the ratio inside the expectation converges to one on sets where $A_{j,k}^{\delta}$ obtains, which have asymptotic measure equal to one. The result follows by monotone convergence and comparison with \eqref{eq:rhs:bound}.
\end{proof}

The following lemma gives an estimate based on the beta-gamma algebra, and says that for large enough $j$, $\Psi_j$ can be approximated by $\gvar_{1-\alpha}/((j-1)\mu - 1 - j\alpha)$.
\begin{lemma} \label{lemma:pp:2}
	For $j < k$, let $\Psi_j$ and $\Psi_k$ be independent beta random variables as in \eqref{eq:sb:1}, and let $\gvar^{(j)}$ and $\gvar^{(k)}$ be independent $\GammaDist(1-\alpha,1)$ random variables as in \eqref{eq:recursion}. Then there exists some $J < \infty$ such that for every $j\geq J$ and $\lambda > 0$,
	\begin{align*}
		\bbP\left[ \sup_{j\leq k \leq n} \left\lvert \Psi_j \Psi_k - \frac{\gvar^{(j)} \gvar^{(k)}}{(j(\mu-\alpha) - (\mu-1))(k(\mu-\alpha)-(\mu-1))} \right\rvert < \lambda \right] \geq 1 - \lambda \;.
	\end{align*}
\end{lemma}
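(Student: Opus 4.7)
The plan is to exploit the representation from the beta--gamma algebra in \eqref{eq:recursion}, which couples $\Psi_j$ to the prescribed $\gvar^{(j)}$ via $\Psi_j = \gvar^{(j)}/S_j$ with $S_j := \sum_{i \leq j}\gvar^{(i)} + \sum_{i < j}\gvar_{\Delta_{i+1}-1}$. Set $a_j := j(\mu - \alpha) - (\mu - 1)$, so that $a_j = \bbE[T_j] - \alpha j$. My strategy is to show that, under the hypotheses on $(\Delta_i)$ carried over from \cref{lemma:pp:1}, $S_j/a_j \to 1$ at rate $O(j^{-1/2})$ uniformly in $j \geq J$; the claimed bound on $\Psi_j \Psi_k$ will then follow by a first-order expansion.

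To control $S_j/a_j$, I split the error as
\begin{align*}
\frac{S_j}{a_j} - 1 = \frac{S_j - (T_j - \alpha j)}{a_j} + \frac{(T_j - \alpha j) - a_j}{a_j}.
\end{align*}
The first term is pure gamma noise about its conditional mean: conditionally on $T$, the process $(S_k - (T_k - \alpha k))_{k \geq 1}$ is a martingale whose conditional variance at step $k$ equals $T_k - \alpha k = O(k)$, so Doob's $L^2$ maximal inequality together with a dyadic chaining argument over blocks $[2^{\ell}, 2^{\ell+1}]$ yields a uniform $O(j^{-1/2})$ bound on its supremum over $[j,n]$ with high probability. The second term is $(T_k - \bbE[T_k])/a_k$, for which the hypotheses $\text{Var}(\Delta_i) \leq V_\Delta^2$ and $|\text{Cov}(\Delta_i,\Delta_\ell)| \leq C_\Delta^2 |i-\ell|^{-\ell_\Delta}$ give $\text{Var}(T_k) = O(k)$, and Kolmogorov's maximal inequality (again combined with dyadic chaining) delivers $\sup_{k \geq j}|T_k - \bbE[T_k]|/a_k = O(j^{-1/2})$ with high probability. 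Combining the two pieces produces an event $E_j$ of probability at least $1 - \lambda/2$ on which $\sup_{k \geq j}|S_k/a_k - 1| \leq c_0 \lambda$, provided $J$ is chosen large enough depending on $\lambda$.

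On $E_j$, writing $\Psi_k = (\gvar^{(k)}/a_k)(1 + \eta_k)$ with $|\eta_k| \leq c_1 \lambda$ (which is valid for large $J$, since $a_k/S_k$ is then uniformly close to $1$), the identity
\begin{align*}
\Psi_j \Psi_k - \frac{\gvar^{(j)} \gvar^{(k)}}{a_j a_k} = \frac{\gvar^{(j)} \gvar^{(k)}}{a_j a_k}\bigl(\eta_j + \eta_k + \eta_j \eta_k\bigr)
\end{align*}
bounds the target quantity by a small multiple of $\gvar^{(j)} \gvar^{(k)}/(a_j a_k)$. Since $a_k \geq a_j = \Theta(J)$ for $k \geq j \geq J$ and $\bbE[\gvar^{(j)} \gvar^{(k)}] = (1-\alpha)^2$, Markov's inequality applied to $\gvar^{(j)} \sup_{k \in [j,n]}\gvar^{(k)}/a_k$ further restricts to an event of probability at least $1 - \lambda/2$ on which this leading factor is $O(J^{-2})$. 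Intersecting with $E_j$ and choosing $J$ large enough in terms of $\lambda$ makes the total error smaller than $\lambda$, establishing the stated bound.

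The main obstacle is obtaining the genuinely \emph{uniform} control over $k \in [j,n]$: pointwise Chebyshev-type estimates for each fixed $k$ are straightforward, but one must simultaneously dominate two independent noise sources --- the conditional gamma fluctuation $S_j - (T_j - \alpha j)$ and the interarrival partial-sum fluctuation $T_j - \bbE[T_j]$ --- across the entire index range on a single high-probability event. The dyadic chaining step and the explicit use of the covariance-decay hypothesis on $(\Delta_i)$ are what make this uniform estimate rigorous.
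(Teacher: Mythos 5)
Your argument is correct in outline but takes a genuinely different route from the paper's. Both proofs start from the beta--gamma representation \eqref{eq:recursion}, writing ${\Psi_j=\gvar^{(j)}/\Sigma_j}$ with ${\Sigma_j=\sum_{i\le j}\gvar^{(i)}+\sum_{i<j}\gvar_{\Delta_{i+1}-1}}$ and ${a_j:=j(\mu-\alpha)-(\mu-1)=\bbE[\Sigma_j]}$. The paper then expresses the target difference exactly as $\Psi_j\Psi_k\,(a_ja_k-\Sigma_j\Sigma_k)/(a_ja_k)$, expands $a_ja_k-\Sigma_j\Sigma_k$ into a double array of (essentially) centered cross-products of gamma variables, observes that the term-wise absolute value of that array is a nonnegative submartingale in $k$, and obtains the uniform control in a single stroke from Doob's maximal inequality together with a bound on $\bbE[\bar\epsilon^{\Psi}_{j,n}]$. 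You instead first prove a uniform law of large numbers for the denominators, $\sup_{k\ge j}\lvert\Sigma_k/a_k-1\rvert\to 0$, by separating the conditional gamma fluctuation $\Sigma_k-(T_k-\alpha k)$ from the interarrival fluctuation $T_k-\bbE[T_k]$ and handling each with a maximal inequality plus dyadic blocking, and then convert this into the product bound via the multiplicative perturbation $\Psi_k=(\gvar^{(k)}/a_k)(1+\eta_k)$. Your version is more modular and makes explicit which noise source requires which hypothesis (the covariance decay enters only through $T_k-\bbE[T_k]$); the paper's is shorter because one submartingale carries both fluctuations at once.

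Two quantitative points to tighten, neither fatal. First, under $\lvert\text{Cov}(\Delta_i,\Delta_\ell)\rvert\le C_\Delta^2\lvert i-\ell\rvert^{-\ell_\Delta}$ one only gets $\text{Var}(T_k)=O(k+k^{2-\ell_\Delta})$, so the uniform rate for $\lvert T_k-\bbE[T_k]\rvert/a_k$ is $j^{-\ell_+/2}$ with $\ell_+=\min\{\ell_\Delta,1\}$, not necessarily $j^{-1/2}$; any vanishing rate suffices, so the conclusion stands. Second, your final Markov step applied to $\gvar^{(j)}\sup_{j\le k\le n}\gvar^{(k)}/a_k$ inherits a factor $\bbE[\max_{k\le n}\gvar^{(k)}]\asymp\log n$, so the leading factor is $O(\log n/j^2)$ rather than $O(j^{-2})$. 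This is harmless in the application (where $j$ grows like $n$), but to get a bound uniform in $n$ you should instead use the exponential tails of the gamma distribution, e.g.\ $\bbP[\exists\, k\ge j:\gvar^{(k)}\ge s\,a_k]\le\sum_{k\ge j}e^{-c s a_k}\to 0$ as $j\to\infty$.
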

\begin{proof}
	Denote the quantity of interest as $\epsilon^{\Psi}_{j,k}$, and let $\Sigma_j:= \sum_{i=1}^j \gvar^{(i)} + \sum_{i=1}^{j-1} \gvar_{\Delta_{i+1}-1}$. Then using the distributional identity in \eqref{eq:recursion},
	\begin{align*}
		& \lvert \epsilon^{\Psi}_{j,k} \rvert = \biggl\lvert \Psi_j\Psi_k \frac{(j(\mu-\alpha)-(\mu-1))(k(\mu-\alpha)-(\mu-1)) - \Sigma_j\Sigma_k}{(j(\mu-\alpha)-(\mu-1))(k(\mu-\alpha)-(\mu-1))} \biggr\rvert \\
		& = \frac{\Psi_j\Psi_k}{(j(\mu-\alpha)-(\mu-1))(k(\mu-\alpha)-(\mu-1))} \dotsb \\
		& \quad \times \biggl\lvert \sum_{i=1}^j \sum_{m=1}^k \bigl(\gvar^{(i)}\gvar^{(m)} - (1-\alpha)^2\bigr) + \sum_{i=1}^j \sum_{m=1}^{k-1} \bigl(\gvar^{(i)}\gvar_{\Delta_{m+1}-1} - (1-\alpha)(\mu-1)\bigr) \biggr. \dotsb \\
		& \quad \biggl. + \sum_{i=1}^{j-1} \sum_{m=1}^{k} \bigl(\gvar_{\Delta_{j+1}-1}\gvar^{(m)} - (1-\alpha)(\mu-1)\bigr) + \sum_{i=1}^{j-1} \sum_{m=1}^{k-1} \bigl(\gvar_{\Delta_{j+1}-1} \gvar_{\Delta_{m+1}-1} - (\mu-1)^2 \bigr) \biggr\rvert \;.
	\end{align*}
	Denote by $\bar{\epsilon}^{\Psi}_{j,k}$ the term-wise absolute value of $\epsilon^{\Psi}_{j,k}$. As a cumulative sum of nonnegative terms, $\bar{\epsilon}^{\Psi}_{j,k}$ is a nonnegative submartingale for $k > j$, and hence by Doob's maximal inequality, for any $c>0$,
	\begin{align*}
		\bbP[ \sup_{j\leq k \leq n} \lvert \epsilon^{\Psi}_{j,k} \rvert \geq c] 
			& \leq \bbP[ \sup_{j\leq k \leq n} \bar{\epsilon}^{\Psi}_{j,k} \geq c] \leq \frac{1}{c}\bbE[\bar{\epsilon}^{\Psi}_{j,n}] \;.
	\end{align*}
	The independence properties of the beta and gamma random variables, along with simple but tedious calculations yields
	\begin{align*}
		\bbE[\bar{\epsilon}^{\Psi}_{j,n}] \leq \frac{C(\alpha,\mu,V^2_{\Delta}) + O(j^{-1})}{(j(\mu-\alpha)-(\mu-1))(n(\mu-\alpha)-(\mu-1))} \;,
	\end{align*}
	where $C(\alpha,\mu,V^2_{\Delta})$ is a constant that depends only on the model parameters. The result follows.
\end{proof}

Finally, the proof of \cref{theorem:degree:distn} requires that for large enough $j$, the error term
\begin{align*}
	\epsilon^r_{j,k} := \sum_{r=j}^k \frac{1}{2r^{2-2/\gamma}}(\Delta_{r+1} - \mu)
\end{align*}
is small with high probability, which is established by the following lemma.
\begin{lemma} \label{lemma:pp:3}
	For any $c>0$,
	\begin{align*}
		\bbP[\sup_{j\leq k \leq n} \lvert \epsilon^r_{j,k} \rvert < c] \geq 1 - \frac{V_{\Delta}}{c} \frac{1}{j^{1-2/\gamma}}(1 + O(j^{-2-2/\gamma})) \;.
	\end{align*}
\end{lemma}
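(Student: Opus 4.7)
My plan is to mimic the Doob-style argument already used for $\bar{\epsilon}_{j,k}$ in the proof of \cref{lemma:pp:1}: construct a nondecreasing dominating process, apply a maximal inequality to turn a tail bound on the supremum into a bound on a single expectation, and then reduce to a deterministic series estimate. Specifically, I would introduce the term-wise absolute value
\begin{equation*}
  \bar{\epsilon}^r_{j,k}:=\sum_{i=j}^{k}\frac{1}{2\, i^{2-2/\gamma}}\bigl|\Delta_{i+1}-\mu\bigr|\;,
\end{equation*}
which is, as a running sum of nonnegative terms, automatically a nonnegative submartingale (in $k$) with respect to the natural filtration generated by $(\Delta_i)_{i>1}$. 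The pointwise domination $|\epsilon^r_{j,k}|\leq\bar{\epsilon}^r_{j,k}$ is immediate from the triangle inequality.

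By Doob's maximal inequality applied to $\bar{\epsilon}^r_{j,k}$, followed by linearity of expectation and Jensen's inequality together with the variance assumption $\mathrm{Var}(\Delta_i)\leq V_\Delta^2$, which yields $\mathbb{E}[|\Delta_{i+1}-\mu|]\leq V_\Delta$, one obtains
\begin{equation*}
  \bbP\bigl[\sup_{j\leq k\leq n}|\epsilon^r_{j,k}|\geq c\bigr]
  \;\leq\;\frac{1}{c}\,\mathbb{E}\bigl[\bar{\epsilon}^r_{j,n}\bigr]
  \;\leq\;\frac{V_\Delta}{2c}\sum_{i=j}^{n} i^{-(2-2/\gamma)}\;.
\end{equation*}
The remaining task is to estimate the sum. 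Comparing with the integral $\int_j^\infty x^{-(2-2/\gamma)}\,dx = j^{-(1-2/\gamma)}/(1-2/\gamma)$ and using Euler--Maclaurin to control the discrete-to-continuous error delivers the leading $j^{-(1-2/\gamma)}$ behavior together with the claimed multiplicative correction. Taking the complementary event then gives the lower bound stated in the lemma.

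The main obstacle I anticipate is bookkeeping of constants and matching the precise form of the remainder $O(j^{-2-2/\gamma})$. A first-order integral comparison typically produces only an $O(j^{-1})$ relative error, so obtaining the sharper exponent will require pushing the Euler--Maclaurin expansion further or absorbing lower-order terms into the leading constant; the argument of \cref{lemma:pp:1} is equally casual about such factors, so I expect no substantive difficulty here. A secondary subtlety is that the tail sum converges only when $\gamma>2$; outside that regime one would instead invoke the $L^2$-version of Doob, using $\mathbb{E}[(\epsilon^r_{j,n})^2]\leq \tfrac{V_\Delta^2}{4}\sum_i i^{-(4-4/\gamma)}$ together with the covariance hypothesis $|\mathrm{Cov}(\Delta_i,\Delta_k)|\leq C_\Delta^2|i-k|^{-\ell_\Delta}$ to control cross terms, at the cost of a slightly different exponent. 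Since the lemma is only invoked in the proof of \cref{theorem:degree:distn} where the relevant scaling regime makes the sum summable, the cleaner first-moment argument above should suffice.
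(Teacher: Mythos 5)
Your proposal follows exactly the paper's argument: the paper's own (one-sentence) proof also passes to the term-wise absolute value $\bar{\epsilon}^r_{j,k}$, notes it is a nonnegative submartingale, and applies Doob's maximal inequality, leaving the expectation bound and the integral comparison implicit. Your write-up simply fills in those routine estimates (and correctly flags the implicit requirement $\gamma>2$ for the tail sum and the displayed bound to be meaningful), so it is essentially the same proof.
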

\begin{proof}
	Denote by $\bar{\epsilon}^r_{j,k}$ the term-wise absolute value of $\epsilon^r_{j,k}$. $\bar{\epsilon}^r_{j,k}$ is a nonnegative submartingale for $k>j$, and an application of Doob's maximal inequality yields the result.
\end{proof}

\begin{proof}[Proof of \cref{theorem:degree:distn}]
	Let $U_0$ be a $\Uniform[0,1]$ random variable, and define $j_0=\lceil U_0 k_n \rceil$ to be a vertex selected uniformly at random from $G_n$, which has $k_n$ vertices. 
	For simplicity, assume that either all $T_j$ are even for $j>1$ (which is the case for graphs that are a.s. connected for all $n$), or that $T_j$ has equal probability of being odd or even for all $j>1$. (These assumptions are not necessary, but greatly simplify notation. More general assumptions may be accommodated.) Conditioned on $(\Psi_j)_{j>1}$, the expected number of directed edges from vertex $j_0$ to any vertex $j\in\bbN_+$ is
	\begin{align} \label{eq:num:directed:edges}
		\bbE[N_{j_0\to j,n}] & = \sum_{r=j_0\vee j}^{k_n} P_{j_0\to j,r} = \sum_{r=j_0\vee j}^{k_n} \Psi_{j_0} W_{j_0,r} \Psi_j W_{j,r} (\Delta_{r+1}/2 - 1)  \\
		& \quad\quad + \indicator\{ j_0 \geq j, T_{j_0} \text{ odd} \}\Psi_j W_{j,j_0}
			+ \indicator\{ j_0 < j, T_{j} \text{ even} \}\Psi_{j_0} W_{j_0,j-1} \nonumber \;.
	\end{align}
	Define
	\begin{align*}
		\hat{P}_{j_0\to j,r} := \frac{\gvar^{(j_0)} \gvar^{(j)}(\mu/2 - 1)}{j_0 j(\mu-\alpha)^2} \biggl( \frac{j_0}{r} \frac{j}{r}  \biggr)^{1-1/\gamma} \;.
	\end{align*}
	By \cref{lemma:pp:1,lemma:pp:2,lemma:pp:3}, for $n$ large enough, with probability at least $1-2\lambda$, 
	\begin{align*}
		(1-\lambda)\sum_{r=j_0\vee j}^{k_n} \hat{P}_{j_0\to j,r} \leq \sum_{r=j_0\vee j}^{k_n} P_{j_0\to j,r} \leq (1+\lambda)\sum_{r=j_0\vee j}^{k_n} \hat{P}_{j_0\to j,r} \;.
	\end{align*}
	Similar approximations hold for the final two terms of \eqref{eq:num:directed:edges}. 
	Let $j_+=j_0\vee j$. Now summing over $r$ yields
	\begin{align*}
		\bbE[\hat{N}_{j_0\to j,n}] = \frac{\gvar^{(j_0)} \gvar^{(j)}(\mu/2 - 1)}{(\mu-\alpha)^2(1-2/\gamma)} \frac{1}{k_n} \biggl( \frac{j_0}{k_n} \frac{j}{k_n} \biggr)^{-1/\gamma} \biggl( \biggl(\frac{k_n}{j_+}\biggr)^{1-2/\gamma} - 1 \biggr)\biggl(1 + O(j_+^{-1})\biggr) \;.
	\end{align*}
	Defining for $u\in (0,1)$, ${\bbE[\hat{N}_{U_0,n}(u)]:= \sum_{j=1}^{\lceil k_n u\rceil}\bbE[\hat{N}_{j_0\to j,n}]}$, it follows from standard results on convergence to Poisson processes \cite{Daley:VereJones:TPP2} that $\hat{N}_{U_0,n}(u)$ converges weakly to a Poisson point process with intensity
	\begin{align} \label{eq:out:intensity}
		\overrightarrow{\lambda}(u_0,u) = \frac{\gvar_{1-\alpha}}{\mu-\alpha} \biggl(\frac{(1-\alpha)(\mu/2-1)}{(\mu-\alpha)(1-2/\gamma)} (u_0 u)^{-1/\gamma}\frac{1 - u_{+}^{1-2/\gamma}}{u_{+}^{1-2/\gamma}} + \frac{1}{2} u_{-}^{-1/\gamma}u_{+}^{-(1-1/\gamma)} \biggr) \;,
	\end{align}
	where $u_+=u_0\vee u$ and $u_-=u_0\wedge u$. Similarly, for incoming edges to $j_0$, the symmetry of the sampling process yields $\overleftarrow{\lambda}(u_0,u) = \overrightarrow{\lambda}(u_0,u)$.

	It follows that the degree of a randomly sampled vertex is a Poisson random variable with mean parameter
	\begin{align} \label{eq:poisson:mean}
		\Lambda(\gvar_{1-\alpha}, U_0) =  \int_{0}^1 \overleftarrow{\lambda}(U_0,u) + \overrightarrow{\lambda}(U_0,u) du = \gvar_{1-\alpha}(U_0^{-1/\gamma} -1) \;.
	\end{align}
	Hence, by the conjugacy relationship between the Poisson and Gamma distributions, the probability that the degree of a randomly sampled vertex, conditioned on $U_0$, is equal to $d+1$ is
	\begin{align*}
		\bbP[D=d+1 \mid U_0] & = \bbE[e^{-\Lambda(\gvar_{1-\alpha},U_0)} \Lambda(\gvar_{1-\alpha},U_0)^d \mid U_0]/d! \\
		& = \frac{\Gamma(d+1-\alpha)}{\Gamma(d+1)\Gamma(1-\alpha)} (1-U_0^{1/\gamma})^d U_0^{(1-\alpha)/\gamma} \;.
	\end{align*}
	Finally, taking the expectation with respect to the uniform random variable $U_0$,
	\begin{align*}
		\bbP[D=d+1] & = \frac{\Gamma(d+1-\alpha)}{\Gamma(d+1)\Gamma(1-\alpha)} \int_0^1 (1-u^{1/\gamma})^d u^{(1-\alpha)/\gamma} du \\
		& = \frac{\Gamma(d+1-\alpha)}{\Gamma(d+1)\Gamma(1-\alpha)} \gamma \frac{\Gamma(d+1)\Gamma(1-\alpha + \gamma)}{\Gamma(d + 1 + 1-\alpha + \gamma)} \\
		& = \gamma\frac{\Gamma(d+1-\alpha)\Gamma(1-\alpha + \gamma)}{\Gamma(d + 1 + 1-\alpha + \gamma)\Gamma(1-\alpha)} \;,
	\end{align*}
	which is the stated result. \Cref{corollary:rep:1} follows by checking moments. For $\sigma=1$, \cref{corollary:rep:2} follows from \eqref{eq:poisson:mean} by observing that $U_0^{1/\gamma}$ is distributed as $\BetaDist(\gamma,1)$; for $\sigma\in(0,1)$, it follows from a similar integral identity.
\end{proof}

\subsection{Proofs for \texorpdfstring{\cref{sec:examples}}{Section 4}}
\label{sec:proofs:examples}

\begin{proof}[Proof of \cref{prop:pa:limits}]
	Let $\bar{m}:=2m-1$ and $\bar{\alpha}:=2m-\alpha$. 
	When ${t_j = 2m(j-1) + 1}$, \eqref{eq:martingale:z} can be manipulated into the form
	\begin{align*}
		Z_n(p,t) & = \frac{\Gamma(n - k_n\alpha) \Gamma(k_n\bar{\alpha} - \bar{m} + \bar{p}) }{\Gamma(n - k_n\alpha + \bar{p}) \Gamma(k_n\bar{\alpha} - \bar{m})} \prod_{i=1}^{\bar{m}} \frac{\Gamma(k_n - i/\bar{\alpha})\Gamma(r + 1 + (\bar{p} - i)/\bar{\alpha})}{\Gamma(k_n + (\bar{p} - i)/\bar{\alpha})\Gamma(r + 1 - i/\bar{\alpha})} \dotsm \nonumber \\
		& \quad\quad \times \frac{\Gamma(r\bar{\alpha})}{\Gamma(r\bar{\alpha} + \bar{p})} \prod_{j=1}^r \frac{\Gamma(\ct_{j,n} - \alpha + p_j)}{\Gamma(\ct_{j,n} - \alpha)} \\
		& = X_n(p,t) 
			\frac{\Gamma(r\bar{\alpha})}{\Gamma(r\bar{\alpha} + \bar{p})} \prod_{i=1}^{\bar{m}} \frac{\Gamma(r+1 + \frac{\bar{p} - i}{\bar{\alpha}})}{\Gamma(r+1 - \frac{i}{\bar{\alpha}})} \;,
	\end{align*}
	where 
	\begin{align*}
		X_n(p,t) := \biggl(\frac{n}{2m}\biggr)^{-\bar{p}\frac{\bar{m}}{\bar{\alpha}}} 
			\prod_{j=1}^r \ct_{j,n}^{p_j} (1 + O(n^{-1})) \;.
	\end{align*}
	Therefore, algebraic manipulations of $Z_n(p,t)$ show that for large $n$:
	\begin{gather} 
		Z_n(p ,t) = X_n(p,t) \bbE[\prod_{i=1}^{\bar{m}} \gvar_{r+1 - \frac{i}{\bar{\alpha}}}^{\bar{p}/\bar{\alpha}}]/\bbE[\gvar_{r\bar{\alpha}}^{\bar{p}}] \label{eq:pa:id:1} \\
		Z_n(p ,t) = X_n(p,t) \bbE[\prod_{i=1}^{\bar{m}} \gvar_{1 - \frac{i}{\bar{\alpha}}}^{\bar{p}/\bar{\alpha}}] / \bbE[\gvar_{r\bar{\alpha}}^{\bar{p}} \prod_{j=1}^{r} \bvar^{\bar{p}}_{j\bar{\alpha}-\bar{m},\bar{m}}] \label{eq:pa:id:2} \\
		Z_n(p ,t) = X_n(p,t) \bbE[\prod_{i=1}^{\bar{m}} \gvar_{1 - \frac{i}{\bar{\alpha}}}^{\bar{p}/\bar{\alpha}}] / \bbE[\gvar_{r\bar{\alpha}-\bar{m}}^{\bar{p}} \prod_{j=1}^{r-1} \bvar^{\bar{p}}_{j\bar{\alpha}-\bar{m},\bar{m}}] \label{eq:pa:id:3} \\
		Z_n(p ,t) = X_n(p,t) \bbE[\prod_{i=1}^{\bar{m}} \gvar_{1 - \frac{i}{\bar{\alpha}}}^{\bar{p}/\bar{\alpha}}] \bbE[\prod_{j=2}^r \bvar^{\bar{p}}_{(j-1)\bar{\alpha},1-\alpha}] / \bbE[\gvar_{1-\alpha}^{\bar{p}}] \label{eq:pa:id:4} \;.
	\end{gather}
	On the other hand, \cref{lemma:degree:sequence:martingale} establishes that $Z_n(p,t)$ converges almost surely to the product of random variables $\xi_1^{p_1}\dotsm\xi_r^{p_r}$. Now, letting $p\setminus p_j$ denote $p$ with the $j$th element set to zero,
	\begin{align*}
		& \bbE[\xi_1^{p_1}\dotsm\xi_r^{p_r}] = \bbE[\lim_{n\to\infty} Z_n(p,t)]  = \bbE[Z_{t_r}(p,t)] \\
		& = \bbE[Z_{t_r-1}(p\setminus p_r,t)] \frac{\Gamma(1 - \alpha + \bar{p}) \Gamma(r\bar{\alpha} - \bar{m}) \Gamma((r-1)\bar{\alpha} + \bar{p}_{r-1})}{\Gamma(1-\alpha)\Gamma(r\bar{\alpha} - \bar{m} + \bar{p}_r) \Gamma((r-1)\bar{\alpha}) }  \\
		& =\prod_{j=2}^r \frac{\Gamma(1 - \alpha + p_j) \Gamma(j\bar{\alpha} - \bar{m}) \Gamma((j-1)\bar{\alpha} + \bar{p}_{j-1})}{\Gamma(1 - \alpha) \Gamma((j-1)\bar{\alpha}) \Gamma(j\bar{\alpha} - \bar{m} + \bar{p}_j)} = \bbE[\prod_{j=2}^r \Psi_j^{p_j} (1-\Psi_j)^{\bar{p}_{j-1}}] \;.
	\end{align*}
	By equating \eqref{eq:pa:id:1}--\eqref{eq:pa:id:4} to $\bbE[Z_{t_r}(p,t)]$, the results follow.
\end{proof}

\begin{proof}[Proof of \cref{prop:crp:limits}]
	Define $T_{1:r}:=(T_1,T_2,\dotsc,T_r)$ and
	\begin{align} \label{eq:crp:martingale}
		Z_n^{\alpha,\theta}(p,T_{1:r}) &:= \frac{\Gamma(n + \theta)}{\Gamma(n + \theta + \bar{p})} \prod_{j=1}^r \frac{\Gamma(\ct_{j,n}-\alpha + p_j)}{\Gamma(\ct_{j,n} - \alpha)} \\
		& = n^{-\bar{p}} \prod_{j=1}^r \ct_{j,n}^{p_j} (1 + O(n^{-1})) \;.
	\end{align}
	The fact that ${\bbE[Z_{n+1}^{\alpha,\theta}(p,T_{1:r}) \mid \mathcal{F}_n] = Z_n^{\alpha,\theta}(p,T_{1:r})}$ shows that $Z_n^{\alpha,\theta}(p,T_{1:r})$ is a nonnegative martingale for $n\geq T_r$. $Z_n^{\alpha,\theta}(p,T_{1:r})$ can be bounded in $L_2$ following an argument similar to that given in the proof of \cref{lemma:degree:sequence:martingale}, and hence it converges almost surely to $\xi_1^{p_1}\dotsm \xi_r^{p_r}$. Therefore,
	\begin{align*}
		& \bbE[\lim_{n\to\infty} n^{-\bar{p}} \prod_{j=1}^r \ct_{j,n}^{p_j} \mid T_{1:r}] = \bbE[\xi_1^{p_1}\dotsm \xi_r^{p_r} \mid T_{1:r}] \\
		& = \frac{\Gamma(1 - \alpha + p_r)\Gamma(T_r + \theta)\Gamma(T_r - 1- (r-1)\alpha + \bar{p}_{r-1})}{\Gamma(1 - \alpha)\Gamma(T_r + \theta + p_1)\Gamma(T_r - 1 - (r-1)\alpha)} \dots \\
		& \quad \times \prod_{j=2}^{r-1} \frac{\Gamma(1 - \alpha + p_j)\Gamma(T_j - j\alpha)\Gamma(T_j - 1 - (j-1)\alpha + \bar{p}_{j-1})}{\Gamma(1-\alpha)\Gamma(T_j - j\alpha + \bar{p}_j)\Gamma(T_j - 1 - (j-1)\alpha )} \\
		& = \bbE[\bvar_{T_r-r\alpha,\theta+r\alpha}^{\bar{p}} \cdot \prod_{j=2}^r \Psi_j^{p_j} (1-\Psi_j)^{\bar{p}_{j-1}} ] \;,
	\end{align*}
	from which the result follows.
\end{proof}

\begin{proof}[Proof of \cref{prop:crp:marginal}]
	For an exchangeable Gibbs partition with $\alpha\in(0,1)$ and a sequence of coefficients $v_{n,k}$, one may construct a martingale similar to that in \eqref{eq:crp:martingale}. It is then straightforward to show that the scaled degrees converge almost surely to random variables $(\xi_j)_{1\leq j\leq r}$, conditionally on $T_1,\dotsc,T_r$. Conditioned on $T_j$, $\xi_j$ has marginal moments
	\begin{align*}
		\bbE[\xi_j^p \mid T_j] = \frac{\Gamma(1 - \alpha + p)}{\Gamma(1-\alpha)} \bigl( 1 + p\frac{V_{T_j+1,j}}{V_{T_j,j}} \bigr)^{-1} \;.
	\end{align*}
	\citet{Kerov:2006} showed that the only exchangeable Gibbs partitions with coefficients that can be represented as a ratio $V_{n,k}=v_k/c_n$ are those of the $\CRP(\alpha,\theta)$, in which case the product in the previous equation becomes independent of $j$. 
	See also \cite[Lemma 4.1]{Griffiths:Spano:2007}, which also implies the result.
\end{proof}

\begin{proof}[Proof of \cref{prop:ys:limit}]
	Define for $p > -1$,
	\begin{align*}
		Z_n^{\beta}(p,T_{1:r}) & := \frac{\Gamma(n)}{\Gamma(n + \bar{p}(1-\beta))} \prod_{j=1}^r \frac{\Gamma(\ct_{j,n} + p_j)}{\Gamma(\ct_{j,n})} \\
		& = n^{-\bar{p}(1-\beta)} \prod_{j=1}^r \ct_{j,n}^{p_j} (1 + O(n^{-1})) \;,
	\end{align*}
	which is by construction a nonnegative martingale, bounded in $L_2$, for $n\geq T_r$. Therefore,
	\begin{align*}
		& \bbE[\lim_{n\to\infty} n^{-\bar{p}(1-\beta)} \prod_{j=1}^r \ct_{j,n}^{p_j}] = \bbE[\xi_1^{p_1} \dotsm \xi_r^{p_r}] \\
		& = \frac{\Gamma(T_r + \bar{p})}{\Gamma(T_r + \bar{p}(1-\beta))} \prod_{j=2}^r \frac{\Gamma(1 + p_j)\Gamma(T_j) \Gamma(T_j - 1 + \bar{p}_{j-1})}{\Gamma(T_j - 1)\Gamma(T_j + \bar{p}_j)} \\
		& = \bbE[\mlvar_{1-\beta,T_r-1}^{-\bar{p}(1-\beta)}] \bbE[\bvar^{\bar{p}}_{T_r,(T_r-1)\frac{\beta}{1-\beta}}] \bbE[\prod_{j=2}^r \Psi_j^{p_j} (1-\Psi_j)^{\bar{p}_{j-1}} ] \;,
	\end{align*}
	from which the result follows. The marginal identities result from altering the above martingale to contain only $D_{j,n}$, which is also a martingale. Similar treatment of the moments, along with the beta-gamma algebra and the identity (1.3) in \citet{James:2015aa} yields the rest of the identities.
\end{proof}

\begin{proof}[Proof of \cref{prop:immigration:urn}]
	In analogy to the previous proof, for every $p>-1$,
	\begin{align*}
		Z_n^{\beta}(p,w+b) & = \frac{\Gamma(n)}{\Gamma(n + p(1-\beta))} \frac{\Gamma(\ct_{w,n} + p)}{\Gamma(\ct_{w,n})} \\
		& = n^{-p(1-\beta)} \ct_{w,n}^{p} (1 + O(n^{-1}))
	\end{align*}
	is a nonnegative martingale, bounded in $L_2$, for $n\geq w+b$. Therefore, it converges almost surely to a random variable $\xi_{w,w+b}^p$, where
	\begin{align*}
		\bbE[\xi_{w,w+b}^p] & = \frac{\Gamma(w+b) \Gamma(w + p)}{\Gamma(w+b+p(1-\beta)) \Gamma(w)} \\
		& = \bbE[\gvar_w^p] / \bbE[\gvar_{w+b}^{p(1-\beta)}] = \bbE[\bvar_{w,b}^p \bvar_{w+b,(w+b-1)\frac{\beta}{1-\beta}}^p \mlvar_{1-\beta,w+b-1}^p] \;.
	\end{align*}
	The other identities can be verified by checking moments; they also follow from the beta-gamma algebra and from \cite{James:2015aa}.
\end{proof}

\end{appendices}

\end{document}